\numberwithin{equation}{section}
\newcommand{\eg}{\emph{e.g.}}
\newcommand{\ie}{\emph{i.e.}}
\newcommand{\cf}{\emph{cf.}}
\newcommand{\at}{\!\!\downharpoonright}
\newcommand{\variable}{\,\textrm{\textvisiblespace}\,}
\newcommand{\id}{\mathrm{id}}
\newcommand{\loc}{\mathrm{loc}}
\newcommand{\Res}[2]{\mathop{\mathrm{Res}^{#1}_{#2}}}
\newcommand{\Con}{\mathrm{Con}}
\newcommand{\conj}{\mathrm{conj}}
\newcommand{\con}{\mathrm{con}}
\newcommand{\Ind}[2]{\mathop{\mathrm{Ind}_{#1}^{#2}}}
\newcommand{\res}[2]{\mathop{\mathrm{res}^{#1}_{#2}}}
\newcommand{\ind}[2]{\mathop{\mathrm{ind}_{#1}^{#2}}}
\newcommand{\inthom}{\mathrm{\underline{Hom}}}
\newcommand{\Mor}{\mathrm{Mor}}
\newcommand{\N}{\mathbb{N}}
\newcommand{\op}{\mathrm{op}}
\newcommand{\pt}{\mathrm{pt}}
\newcommand{\can}{\mathrm{can}}
\newcommand{\unit}{\mathbf{1}} 
\newcommand{\Cstar}{\mathrm{C}^*\!}
\newcommand{\Cstaralg}{\mathsf{C}^*\!\mathsf{alg}}
\newcommand{\Cstarsep}{\mathsf{C}^*\!\mathsf{sep}}
\newcommand{\Ext}{\mathrm{Ext}}
\newcommand{\Tor}{\mathrm{Tor}}
\newcommand{\KK}{\mathsf{KK}}
\newcommand{\Cell}{\mathsf{Cell}}
\newcommand{\Gr}{\mathsf{Gr}}
\renewcommand{\mod}{\mathsf{mod}}
\newcommand{\Mod}{\mathsf{Mod}}
\newcommand{\GrMod}{\mathsf{GrMod}}
\newcommand{\Mack}{\mathsf{Mac}}
\newcommand{\Mac}{\mathsf{Mac}}
\newcommand{\boxpr}{{\boxempty}}
\newcommand{\Projobj}{\mathsf{Proj}}
\newcommand{\Ab}{\mathsf{Ab}}
\newcommand{\set}{\mathsf{set}}
\newcommand{\Z}{\mathbb{Z}}
\newcommand{\R}{\mathbb{R}}
\newcommand{\C}{\mathbb{C}}
\newcommand{\Q}{\mathbb{Q}}
\theoremstyle{definition}
\newtheorem{defi}[equation]{Definition}
\newtheorem*{conv*}{Conventions}
\newtheorem*{ack}{Acknowledgements}
\newtheorem*{Related work}{Related work}
\theoremstyle{theorem}
\newtheorem{thm}[equation]{Theorem}
\newtheorem{lemma}[equation]{Lemma}
\newtheorem{thm-defi}[equation]{Theorem-Definition}
\newtheorem{prop}[equation]{Proposition}
\newtheorem{cor}[equation]{Corollary}
\newtheorem*{thm*}{Theorem}
\newtheorem*{lemma*}{Lemma}
\newtheorem*{cor*}{Corollary}
\newtheorem*{conj*}{Conjecture}
\newtheorem*{question*}{Question}
\theoremstyle{remark}
\newtheorem{notation}[equation]{Notation}
\newtheorem{remark}[equation]{Remark}
\newtheorem*{remark*}{Remark}
\newtheorem{remarks}[equation]{Remarks}
\newtheorem{example}[equation]{Example}
\begin{document}
\title{Equivariant Kasparov theory of finite groups via Mackey functors}
\author{Ivo Dell'Ambrogio} 
\address{Universit\"at Bielefeld, Fakult\"at f\"ur Mathematik, BIREP Gruppe, Postfach 10\,01\,31, 33501 Bielefeld, Germany}
\email{ambrogio@math.uni-bielefeld.de}
\date{}

\subjclass[2000]{46L80, 46M18, 19A22}

\maketitle

\begin{abstract}
Let $G$ be any finite group. In this paper, we systematically exploit general homological methods in order to reduce the computation of $G$-equivariant $KK$-theory to topological equivariant $K$-theory.
The key observation is that the functor on $\KK^G$ that assigns to a $G$-$\Cstar$-algebra $A$ the collection of its $K$-theory groups $\{ K^H_*(A) :  H\leqslant G \}$ admits a lifting to the abelian category of $\Z/2$-graded Mackey modules over the representation Green functor for~$G$; moreover, this lifting is the universal exact homological functor for the resulting relative homological algebra in~$\KK^G$. 
It follows that there is a spectral sequence abutting to $\KK^G_*(A,B)$, whose second page displays Ext groups computed in the category of Mackey modules.
Thanks to the nice properties of Mackey functors, we obtain a similar K\"unneth spectral sequence which computes the equivariant $K$-theory groups of a tensor product $A\otimes B$. Both spectral sequences behave nicely if~$A$  belongs to the localizing subcategory of $\KK^G$ generated by the algebras $C(G/H)$ for all subgroups $H\leqslant G$.

\end{abstract}

\tableofcontents

\section{Introduction}
\label{intro}

The theory of Mackey functors (\cite{webb:guide, dress:contributions ,bouc:green_functors, lewis:mimeo,thevenaz-webb:structure}\ldots) has proved itself to be a powerful conceptual \emph{and} computational tool in many branches of mathematics: 
group cohomology, equivariant stable homotopy, algebraic K-theory of group rings, algebraic number theory, etc.; in short, any theory where one has finite group actions and induction/transfer maps. We refer to the survey article~\cite{webb:guide}.

Equivariant Kasparov theory $\KK^G$ (\cite{kasparov_first,phillips:free, meyer_nest_bc, meyer_cat}\ldots), although typically more preoccupied with topological groups or infinite discrete groups, is already quite interesting when~$G$ is a finite group -- see for instance the work of C.\,H.\ Phillips~\cite{phillips:free} on the freeness of $G$-actions on $\Cstar$-algebras. There exist induction maps for $\KK^G$-theory, so it is natural to ask whether the theory of Mackey functors has anything useful to say in this context.
As we will shortly see, the answer is definitely~``Yes''.

Recall that for a finite (or, more generally, compact) group~$G$ and every $G$-$\Cstar$-algebra~$A$, we have the natural identification $\smash{\KK^G_*(\C,A)\cong K^G_*(A)}$ with $G$-equivariant topological K-theory~$K^G_*$, which generalizes Atiyah and Segal's classical $G$-equivariant vector bundle K-cohomology of spaces and has  similar properties. Consequently, equivariant K-theory is often easier to compute than general equivariant KK-theory.
In view of all this, it is natural to ask:

\begin{question*}
To what extent, and how, is it possible to reduce the computation of equivariant KK-theory groups to that of equivariant K-theory groups?
\end{question*}

In order to answer this question precisely, the following two observations will be crucial. First, for any fixed $G$-$\Cstar$-algebra~$A$ the collection of all its equivariant topological K-theory groups $K^H_*(\Res{G}{H} A)\cong \KK^G_*(C(G/H),A)$ ($H\leqslant G$) and of the associated restriction, induction and conjugation maps, forms a graded Mackey functor for~$G$, that we denote~$k^G_*(A)$. In fact, $k^G_*(A)$ carries the structure of a Mackey module over the representation Green functor~$R^G$, and if we denote by $R^G\textrm-\Mack_{\Z/2}$ the category of $\Z/2$-graded modules over~$R^G$ -- which is a perfectly nice  Grothendieck tensor abelian category -- we obtain in this way a lifting of K-theory to a homological functor $k^G_*\colon \KK^G\to R^G\textrm-\Mack_{\Z/2}$ on the triangulated Kasparov category of separable $G$-$\Cstar$-algebras. 
The second basic observation is that~$k^G_*$ is the ``best'' such lifting of K-theory to some abelian category approximating $\KK^G$:
in the technical jargon of \cite{meyernest_hom, meyer:hom}, the functor $k^G_*$ (or more precisely: its restriction to countable modules) is the universal stable homological functor on~$\KK^G$ for the relative homological algebra defined by the K-theory functors $\{K^H_*\circ \Res{G}{H}\}_{H\leqslant G}$. 
Another, but equivalent, way to formulate this second observation is the following: the category $R^G\textrm-\Mack$ of $R^G$-Mackey modules is equivalent to the category of additive contravariant functors $\mathsf{perm}^G\to \Ab$, where $\mathsf{perm}^G$ denotes the full subcategory in~$\KK^G$ of \emph{permutation algebras}, \emph{i.e.}, those of the form~$C(X)$ for~$X$ a finite $G$-set. (See \S\ref{subsec:BBKK}.)

Once all of this is proved, it is a straightforward matter to apply the general techniques of relative homological algebra in triangulated categories (\cite{christensen:ideals,beligiannis:relative, meyer:hom , meyer_homom}) in order to obtain a universal coefficient spectral sequence which will provide our answer to the above question. 
By further exploiting the nicely-behaved tensor product of Mackey modules, we similarly obtain a K\"unneth spectral sequence for equivariant K-theory, which moreover has better convergence properties.

The natural domain of convergence of these spectral sequences consists of \emph{$G$-cell algebras}, namely, those algebras contained in the localizing triangulated subcategory of $\KK^G$ generated by $\mathsf{perm}^G$ (equivalently: generated by the algebras~$C(G/H)$ for all subgroups $H\leqslant G$). We note that the category, $\smash{\Cell^G}$, of $G$-cell algebras is rather large. For instance it contains all abelian separable $G$-$\Cstar$-algebras and is closed under all the classical ``bootstrap'' operations (see Remark~\ref{remark:closure}).

We can now formulate our results precisely.

\begin{thm*}[Thm.~\ref{thm:UCT}]
Let $G$ be a finite group. 
For every~$A$ and~$B$ in~$\KK^G$, and depending functorially on them, there exists a cohomologically indexed right half-plane spectral sequence of the form
\[
E^{p,q}_2 = \Ext^p_{R^G}( k^G_*A , k^G_*B )_{-q} 
\quad
\stackrel{n= p + q}{\Longrightarrow} 
\quad
\KK^G_n(A, B) \,.
\]
The spectral sequence converges conditionally whenever $A$ is a $G$-cell algebra, and converges strongly if moreover~$A$ is such that $\KK^G(A,f)=0$ for every morphism~$f$ which can be written, for each $n\geqslant1$, as a composition of~$n$ maps each of which vanishes under~$k^G_*$. 
If $A$ is a $G$-cell algebra and the $R^G$-module $k^G_*A$ has a projective resolution of finite length~$m\geqslant1$, then the spectral sequence is confined in the region $0\leqslant p \leqslant m+1$ and thus collapses at the page~$E^{*,*}_{m+1}=E^{*,*}_\infty$.
\end{thm*}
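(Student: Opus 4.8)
The plan is to obtain the spectral sequence as a special case of the universal coefficient spectral sequence of relative homological algebra in triangulated categories (\cite{beligiannis:relative,meyernest_hom,meyer:hom}), applied to the homological ideal $\mathcal{I}:=\ker(k^G_*)$ of $\KK^G$. The decisive input is already at hand. By the universality statement recalled above, $k^G_*$ is the universal $\mathcal{I}$-exact stable homological functor, so the relative derived functors of $\mathcal{I}$ are computed inside the abelian category $R^G\textrm-\Mack_{\Z/2}$; concretely the relative $\Ext$-groups of the ideal agree with honest $\Ext$-groups, $\Ext^p_{\mathcal{I}}(A,B)_*\cong\Ext^p_{R^G}(k^G_*A,k^G_*B)_*$. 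Moreover $\mathcal{I}$ has enough projectives: since $\mathsf{perm}^G$ generates $\KK^G$ as a localising subcategory and $R^G\textrm-\Mack$ is the category of additive functors $(\mathsf{perm}^G)^{\op}\to\Ab$, the $\mathcal{I}$-projective objects of $\KK^G$ are precisely the retracts of countable $\KK^G$-coproducts of permutation algebras, and $k^G_*$ sends them to the projective $R^G$-modules. These two facts are exactly what is needed to start the machinery.

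Granting them, for a given $A$ I would choose an $\mathcal{I}$-projective resolution $\cdots\to P_1\to P_0\to A$ --- a complex of $\mathcal{I}$-projective objects augmented to $A$ that $k^G_*$ carries to a projective resolution of the $R^G$-module $k^G_*A$ --- and assemble its defining exact triangles into the associated exact couple, following \cite{meyer:hom}. In the resulting spectral sequence $E_1^{p,q}$ is the group $\KK^G_*(P_p,B)$ in the relevant degree, which, $P_p$ being $\mathcal{I}$-projective, equals $\Hom_{R^G}(k^G_*P_p,k^G_*B)$; computing $d_1$ from the triangles identifies the $E_1$-page with the cochain complex $\Hom_{R^G}(k^G_*P_\bullet,k^G_*B)$, so that $E_2^{p,q}=\Ext^p_{R^G}(k^G_*A,k^G_*B)_{-q}$, which vanishes for $p<0$ --- the claimed cohomologically indexed, right half-plane shape ($2$-periodic in $q$, with total degree $n=p+q$ read modulo $2$ on the abutment $\KK^G_n(A,B)$). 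Functoriality in $B$ is immediate; functoriality in $A$ from the $E_2$-page onwards follows because $\mathcal{I}$-projective resolutions are unique up to chain homotopy and induce the canonical maps on $\Ext$. Everything here stays within separable algebras and countable modules, the setting on which $k^G_*$ is the universal functor.

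It remains to treat convergence. The homotopy colimit of the partial realisations of the tower built above is, by construction, $\mathcal{I}$-equivalent to $A$, and it is genuinely isomorphic to $A$ in $\KK^G$ exactly when $A$ lies in the localising subcategory generated by the $\mathcal{I}$-projectives --- that is, by $\mathsf{perm}^G$ --- i.e.\ precisely when $A\in\Cell^G$; for such $A$, Boardman's conditional convergence criterion for exact-couple spectral sequences applies and yields conditional convergence to $\KK^G_*(A,B)$. Strong convergence requires in addition that the $\varprojlim^1$-obstruction of the associated tower of $\KK^G$-groups vanish, and the standard reformulation of this (as in \cite{meyer:hom,meyernest_hom}) is precisely the hypothesis in the statement: $\KK^G(A,f)=0$ for every morphism $f$ that, for each $n\geq1$, is a composite of $n$ morphisms killed by $k^G_*$. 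Finally, if $k^G_*A$ has a projective resolution of length $m$, one may take the $\mathcal{I}$-projective resolution of $A$ to have length $m$ too: the syzygy at which it would otherwise continue lies in $\Cell^G$ (being a syzygy of the cell algebra $A$) and has trivial $k^G_*$, hence vanishes, since $k^G_*$ reflects isomorphisms between cell algebras. Then the filtration of $\KK^G_*(A,B)$ is finite, conditional convergence upgrades automatically to strong convergence, and $E_2^{p,q}$ vanishes for $p>m$; with the $E_2$-page concentrated in the columns $0\leq p\leq m$, every differential $d_r$ with $r\geq m+1$ vanishes for bidegree reasons and $E^{*,*}_{m+1}=E^{*,*}_\infty$.

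The genuinely non-formal content --- the universality of $k^G_*$ and the identification of $R^G$-Mackey modules with additive functors on $\mathsf{perm}^G$ --- is already established, so constructing the spectral sequence and computing its $E_2$-page is a formal application of known machinery. The step demanding real care is the convergence analysis: showing that the homotopy colimit of the cellular tower recovers $A$ exactly on $\Cell^G$ (this uses the generation statement together with the fact that a cell algebra with vanishing Mackey-module invariant must be zero), and matching the precise strong-convergence hypothesis with Boardman's $\varprojlim^1$-criterion; a minor but genuine point is to keep every resolution inside the separable, countable world on which $k^G_*$ is universal. I expect this convergence bookkeeping, rather than any single deep step, to be the main obstacle.
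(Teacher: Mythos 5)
Your proof is correct and follows essentially the same route as the paper: the paper first establishes a general abstract universal coefficient spectral sequence (Theorem~\ref{thm:abstract_uct}) by combining Meyer's ABC spectral sequence, Christensen's conditional-convergence argument, and the identification of relative derived functors with honest $\Ext$ in $\GrMod\textrm-\mathcal G_*$ afforded by Proposition~\ref{prop:univ_exact}, and then specializes to $\KK^G$ with the $\aleph_1$-smallness caveat of Remark~\ref{rem:aleph}; you re-derive the relevant parts of this abstract theorem in the specific situation, which is logically equivalent. One minor observation: your argument yields $E_2^{p,q}=0$ for $p>m$, i.e.\ confinement in $0\leq p\leq m$, in agreement with Theorem~\ref{thm:abstract_uct} but not with the region $0\leq p\leq m+1$ stated in Theorem~\ref{thm:UCT}, which appears to be a typographical slip in the latter.
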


The groups displayed in the second page~$E^{*,*}_2$ are the homogeneous components of the graded Ext functors of $R^G\textrm-\Mack_{\Z/2}$. Concretely, for $M,N\in R^G\textrm-\Mack_{\Z/2}$
\[
\Ext^n_{R^G}(M,N)_\ell
=
\bigoplus_{i+j=\ell} \Ext^n_{R^G}(M_i, M_j)
\qquad (\ell\in \Z/2  , n\geqslant 0)
\]
where the right-hand-side $\Ext^n_{R^G}(\variable,M_j)$ denotes the $n$-th right derived functor of the Hom functor ${R^G}\textrm-\Mack (\variable,M_j)$ on the abelian category of $R^G$-Mackey modules. The latter category, as well as their objects $k^G_*(A)$, are explained in great detail in Sections~\ref{sec:mackey_green} and~\ref{sec:K_mack}.

\begin{thm*}[Thm.~\ref{thm:KT}]
Let $G$ be a finite group.
For all separable $G$-$\Cstar$-algebras~$A$ and~$B$, and depending functorially in them, there is a homologically indexed right half-plane spectral sequence of the form
\[
E^2_{p,q} = \Tor_{p}^{R^G}(k^G_*A,k^G_*B)_{q}
\quad \stackrel{n=p+q}{\Longrightarrow} \quad K^G_n(A\otimes B)
\]
which strongly converges if $A$ is a $G$-cell algebra.  
If moreover $A$ is such that $k^G_*A$ has a projective resolution of finite length~$m\geqslant1$, then the spectral sequence is confined in the region $0\leqslant p\leqslant m$ and thus collapses at the page~$E_{*,*}^{m+1}=E_{*,*}^{\infty}$.

\end{thm*}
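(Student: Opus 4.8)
The plan is to apply the standard machinery of relative homological algebra in triangulated categories (\cite{beligiannis:relative,meyer:hom,meyer_homom}) to the homological functor $k^G_*$ and the homological ideal $\mathcal I\subseteq\KK^G$ of morphisms annihilated by $k^G_*$, exploiting two features established in the earlier sections. First, $k^G_*$ is the \emph{universal} $\mathcal I$-exact stable homological functor, so it carries $\mathcal I$-projective objects of $\KK^G$ to projective $R^G$-Mackey modules and $\mathcal I$-projective resolutions to ordinary projective resolutions in $R^G\textrm-\Mack_{\Z/2}$. Second --- and this is exactly what separates the present statement from Theorem~\ref{thm:UCT} --- the functor $K^G_*=\KK^G_*(\C,\variable)$ is \emph{continuous}: it commutes with the countable homotopy colimits out of which $G$-cell algebras are built from permutation algebras.

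\textbf{Steps 1--2: resolve $A$, then tensor with $B$.} Realise a chosen $\mathcal I$-projective resolution of $A$ by a cellular tower $0=A_{-1}\to A_0\to A_1\to\cdots$ with $\cone(A_{p-1}\to A_p)\simeq\Sigma^p\bar P_p$, where each $\bar P_p$ is $\mathcal I$-projective and the induced complex $k^G_*(\bar P_\bullet)$ is a projective resolution of $k^G_*(A)$; by the definition of $\Cell^G$ and the general theory of cellular approximations we may moreover arrange $\hocolim_p A_p\simeq A$ when $A$ is a $G$-cell algebra, and take $\bar P_p=0$ for $p>m$ when $k^G_*(A)$ admits a projective resolution of length $m$. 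Now apply $\variable\otimes B$. The $\bar P_p$ are nuclear (retracts of countable sums of shifted permutation algebras), so $\bar P_p\otimes B$ is unambiguous, and $\variable\otimes B$ is an exact coproduct-preserving triangulated endofunctor; hence $(A_p\otimes B)_p$ is again a cellular tower, with $p$-th layer $\Sigma^p\bar P_p\otimes B$ and, for $A$ a cell algebra, homotopy colimit $A\otimes B$. The spectral sequence of this filtered object is a homologically indexed right half-plane spectral sequence
\[
E^1_{p,q}=K^G_{p+q}(\Sigma^p\bar P_p\otimes B)\cong K^G_q(\bar P_p\otimes B)\quad\Longrightarrow\quad K^G_{p+q}(A\otimes B),
\]
with $d^1$ induced by $\bar P_p\to\bar P_{p-1}$ and abutment filtration $F_pK^G_n(A\otimes B)=\image\!\big(K^G_n(A_p\otimes B)\to K^G_n(A\otimes B)\big)$.

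\textbf{Step 3: identify $E^2$, and convergence.} The key input is a Künneth formula for $\mathcal I$-projectives: for every $\mathcal I$-projective $P$ there is a natural isomorphism of $\Z/2$-graded $R^G$-Mackey modules
\[
k^G_*(P\otimes B)\;\cong\;k^G_*(P)\otimes_{R^G}k^G_*(B),
\]
with no higher $\Tor$-correction. By additivity this reduces to $P=C(G/H)$, for which $C(G/H)\otimes B\simeq\Ind{H}{G}\Res{G}{H}B$ and $K^G_*(\Ind{H}{G}\Res{G}{H}B)\cong K^H_*(\Res{G}{H}B)$ by the induction--restriction adjunction; on the algebraic side this matches the computation in \S\ref{subsec:BBKK} and Section~\ref{sec:mackey_green} of the representable $R^G$-Mackey module corresponding to $G/H$ and of the tensor product of $R^G$-modules --- the higher $\Tor$ vanishing because $\Ind{H}{G}$ is exact and $k^G_*(C(G/H))$ is a projective (indeed induced) $R^G$-module. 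Substituting into Step~2 identifies the complex $(K^G_q(\bar P_\bullet\otimes B),d^1)$ with $k^G_*(\bar P_\bullet)\otimes_{R^G}k^G_*(B)$ in internal degree $q$; since $k^G_*(\bar P_\bullet)$ is a projective resolution of $k^G_*(A)$, passing to homology gives $E^2_{p,q}=\Tor_p^{R^G}(k^G_*A,k^G_*B)_q$. When $A$ is a $G$-cell algebra the abutment filtration is bounded below ($F_{-1}=0$) and exhaustive --- the latter because $K^G_*$ is continuous and $A\otimes B=\hocolim_p(A_p\otimes B)$ --- so a right half-plane homological spectral sequence with such a filtration converges strongly, there being no $\lim^1$-obstruction for a sequential homotopy colimit. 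If $\bar P_p=0$ for $p>m$ the tower is finite, the spectral sequence is confined to $0\leqslant p\leqslant m$, and it collapses at $E^{m+1}=E^\infty$.

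\textbf{Main obstacle.} Apart from Step~3 this is a formal application of the triangulated relative-homological-algebra package together with the universal property of $k^G_*$. The genuine work lies in the Künneth identification over the Green functor $R^G$: one must set up the tensor product of $R^G$-Mackey modules so that it is compatible, along $k^G_*$ and \emph{on $\mathcal I$-projectives}, with the symmetric monoidal structure of $\KK^G$, and check that the $d^1$-differential produced by the cellular tower is literally the differential of the chosen projective resolution (and not merely isomorphic to it). The single $\Cstar$-algebraic ingredient is the continuity of $K^G_*$; it is precisely this that upgrades the merely conditional convergence of Theorem~\ref{thm:UCT} --- where $\KK^G(\variable,B)$ turns homotopy colimits into homotopy limits, producing a $\lim^1$-term --- to the strong convergence stated here.
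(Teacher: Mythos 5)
Your proposal is correct and amounts to the same argument as in the paper: the paper invokes Meyer's ABC spectral sequence (\cite{meyer:hom}, Theorems 4.3 and 5.1), whose internal construction is precisely the cellular/phantom-tower filtration you build by hand, while the identification of $E^2$ with $\Tor^{R^G}$ rests, in both treatments, on the same K\"unneth lemma for $\mathcal I$-projectives (your ``Step 3'' is exactly Lemma~\ref{lemma:iso_proj}, proved via the Frobenius isomorphisms~\eqref{frob:Cstar} and Proposition~\ref{prop:higher_frob} and the $\Ind$/$\Res$ compatibilities of Lemma~\ref{lemma:Ind_k}) together with the universal property of $k^G_*$ from Proposition~\ref{prop:univ_exact}. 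Your final observation --- that strong convergence here follows from $K^G_*$ commuting with (countable) coproducts, in contrast to the $\lim^1$ obstruction in Theorem~\ref{thm:UCT} --- is exactly the content of the paper's appeal to \cite{meyer:hom}*{Theorem~5.1}.
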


Now the second page~$E^2_{*,*}$ contains the left derived functors of the tensor product~$\boxpr_{R^G}$ of $\Z/2$-graded $\smash{R^G}$-Mackey modules, which is explained in~\S\ref{subsec:tensor}.

From these spectral sequences there follow the usual consequences and special cases. 
Here we only furnish, as a simple illustration, the following vanishing result.

\begin{thm}
\label{thm:appl}
Let $A$ and~$B$ be two $G$-$\Cstar$-algebras for a finite group~$G$, and assume that either~$A$ or~$B$ is a $G$-cell algebra.
If $K^E_*(\Res{G}{E} A)=0$ for all elementary subgroups $E$ of~$ G$, then $K^G_*(A\otimes B)=0$. 
\end{thm}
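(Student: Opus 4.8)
The plan is to read the conclusion off the K\"unneth spectral sequence of Theorem~\ref{thm:KT}, the one substantial input being Brauer's induction theorem (in Dress's Mackey-functorial formulation). The first thing I would do is show that the hypothesis forces the whole Mackey module $k^G_*A$ to vanish, not merely its values on elementary subgroups. By Brauer's induction theorem the unit $1\in R^G(G/G)=R(G)$ lies in the image of the total induction map $\bigoplus_E \Ind{E}{G}\colon \bigoplus_E R(E)\to R(G)$, where $E$ runs over the elementary subgroups of~$G$; in Dress's formulation this says that the Green functor $R^G$ is projective relative to the family $\mathcal E$ of elementary subgroups. A standard consequence of the module theory over Green functors (see \cite{dress:contributions,bouc:green_functors,thevenaz-webb:structure}) is that then \emph{every} $R^G$-module $M$ is $\mathcal E$-projective, i.e.\ the canonical counit
\[
\varepsilon_M\colon\ \bigoplus_{E\in\mathcal E}\Ind{E}{G}\Res{G}{E}M\ \longrightarrow\ M
\]
is a split epimorphism, so that $M$ is a retract of a module induced up from elementary subgroups. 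Applying this with $M=k^G_*A$: for any elementary $E$ and any $L\leqslant E$ one has $(\Res{G}{E}M)(E/L)=M(G/L)=K^L_*(\Res{G}{L}A)$, and $L$ is again elementary since subgroups of $p$-elementary groups are $p$-elementary; so the hypothesis gives $\Res{G}{E}(k^G_*A)=0$ for all elementary $E$, and the splitting of $\varepsilon_M$ then forces $k^G_*A=0$, that is, $K^H_*(\Res{G}{H}A)=0$ for every subgroup $H\leqslant G$.

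Next I would feed $k^G_*A=0$ into Theorem~\ref{thm:KT}. If $A$ is the $G$-cell algebra, the spectral sequence $E^2_{p,q}=\Tor_p^{R^G}(k^G_*A,k^G_*B)_q\Rightarrow K^G_n(A\otimes B)$ converges strongly and has identically vanishing second page, since the zero module has the zero projective resolution and hence $\Tor_*^{R^G}(0,-)=0$; therefore $K^G_*(A\otimes B)=0$. If instead $B$ is the $G$-cell algebra, I apply the same theorem with the two algebras interchanged: the spectral sequence $E^2_{p,q}=\Tor_p^{R^G}(k^G_*B,k^G_*A)_q\Rightarrow K^G_n(B\otimes A)$ then converges strongly, its second page again vanishes because $\Tor_*^{R^G}$ is additive in each variable and one argument is $0$, and one concludes using the canonical flip isomorphism $A\otimes B\cong B\otimes A$ in $\KK^G$. (When $A$ is the cell algebra there is a shortcut bypassing this second step, since $k^G_*A=0$ already forces $A\simeq0$ in $\KK^G$, for instance via Theorem~\ref{thm:UCT}, so that $A\otimes B\simeq0$; but the K\"unneth sequence disposes of both cases at once.)

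The only nonformal ingredient is the classical combination of Brauer/Dress induction with its module-theoretic corollary; the rest is bookkeeping with the already-proven Theorem~\ref{thm:KT} and the symmetry of the tensor product, and I do not expect a real obstacle. The two points that merit a line of care are the subgroup-closure of the family of elementary subgroups---which is exactly what promotes ``$k^G_*A$ vanishes on elementary subgroups'' to ``$\Res{G}{E}(k^G_*A)=0$ for all elementary $E$'', and hence to ``$k^G_*A=0$''---and the passage from relative projectivity of the Green functor $R^G$ to that of the module $k^G_*A$; both are standard and are in any case recalled in Section~\ref{sec:mackey_green}.
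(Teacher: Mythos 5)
Your proposal is correct and follows the same overall strategy as the paper: use Brauer induction to deduce $k^G_*A=0$, then kill the $E^2$-page of the K\"unneth spectral sequence of Theorem~\ref{thm:KT} and invoke strong convergence (after using the flip $A\otimes B\cong B\otimes A$ to place the cell algebra in the first slot). The only difference is in how you get $k^G_*A=0$: the paper isolates this as Lemma~\ref{lemma:brauer_artin}(1), whose proof writes $1_{R(G)}=\sum_i\ind{E_i}{G}(x_i)$ and computes directly that $1\cdot x=0$ for $x\in K^G_*(A)$; that lemma as stated only gives vanishing at the top level $K^G_*(A)$, and the paper implicitly applies it at every subgroup $H\leqslant G$ (using that elementary subgroups of $H$ are elementary in~$G$) to promote this to $k^G_*A=0$. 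Your route packages the same Brauer-induction input into Dress's statement that $R^G$ is $\mathcal E$-projective, hence every $R^G$-Mackey module is a retract of one induced from elementary subgroups, which establishes $k^G_*A=0$ in one stroke and makes the step explicit that the paper leaves implicit; this is a clean structural variant buying a little conceptual clarity, but is not a genuinely different argument.
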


\begin{proof}
The hypothesis on K-theory implies that $k^G_*(A)=0$ (see Lemma \ref{lemma:brauer_artin} below) and therefore the second page of the K\"unneth spectral sequence is zero.
By symmetry of the tensor product we may assume that $A\in\Cell^G$, and we conclude that $K^G_*(A\otimes B)=0$ by the strong convergence.
\end{proof}

\begin{Related work}
To our knowledge,~\cite{lewis-mandell:uct} is the only published work where spectral sequences are systematically computed in abelian categories of Mackey modules; this is done in the context of equivariant stable homotopy. There may be some overlap between their results and ours; specifically, it should be possible to use \emph{loc.\,cit.}\  to reprove our results in the special case of \emph{commutative} $\Cstar$-algebras.
We also mention that~\cite{ventura} performs explicit computations of Ext functors in the category of Mackey modules over~$R^G$ for some small groups~$G$.  

For $G$ a connected Lie group with torsion-free fundamental group, and for sufficiently nice $G$-$\Cstar$-algebra, there are the K\"unneth and universal coefficient spectral sequences of~\cite{rosenberg-schochet:equiv}, which are computed in the ordinary module category over the complex representation ring of~$G$. 
It seems plausible that a unified treatment of their and our results might be both obtainable and desirable, possibly in terms of Mackey functors for compact Lie groups (\emph{cf}.\ Remark~\ref{rem:stable_htpy}).

Quite recently, a universal coefficient short exact sequence was constructed in~\cite{koehler:uct} for $\KK^{G}$ when~$G$ is a cyclic group of prime order. The invariant used in \emph{loc.\,cit.}\ is a slightly more complicated lifting of K-theory than our Mackey module~$k^G_*$, and contains more information. The range of applicability is the same though: the first algebra must belong to $\Cell^G$.
\end{Related work}

\begin{conv*}
For simplicity, we will work only with complex $\Cstar$-algebras and complex group representations, although the alert reader will see without any trouble how to adapt all results to the real case. 
Our notation $\Res{G}{H}$ for the restriction functor from $G$ to~$H$ is at odds with \eg~\cite{meyer_nest_bc}, where $\Res{H}{G}$ is used instead, but is compatible with the common indexing conventions in the context of Mackey functors.
We always write $\mathcal C(X,Y)$ for the set of morphisms from the object $X$ to the object~$Y$ in a category~$\mathcal C$.
We use the short-hand notations ${}^gH:=gHg^{-1}$ and $H^g:=g^{-1}Hg$ for the conjugates of a subgroup $H\leqslant G$.
If $H,L\leqslant G$ are subgroups, the notation $[H\backslash G/L]$ denotes a full set of representatives of the double cosets $HgL\subseteq G$.

\end{conv*}

\begin{ack}
Our warm thanks go to Serge Bouc for several illuminating discussions on the virtues and vices of Mackey functors. 
\end{ack}

\section{$G$-cell algebras}

After some recollections on the equivariant Kasparov category, we introduce the subcategory of $G$-cell algebras and derive its first properties.

\subsection{Restriction, induction and conjugation}
\label{subsec:res_ind_con}

Let $\KK^G$ be the Kasparov category of separable $G$-$\Cstar$-algebras, for a second countable locally compact group~$G$.
 We refer to the articles \cite{meyer_cat , meyer_nest_bc} for an account of KK-theory considered from the categorical point of view; therein the reader will find proofs or references for the facts recalled in this subsection.
For each~$G$, the category $\KK^G$ is additive and has arbitrary countable coproducts, given by the $\Cstar$-algebraic direct sums $\bigoplus_iA_i$ on which~$G$ acts coordinatewise.
Moreover, it is equipped with the structure of a triangulated category (see~\cite{meyer_nest_bc}, esp.\ Appendix~A); in particular every morphism $f\in \KK^G(A,B)$ fits into a \emph{distinguished triangle} $A\to B\to C\to A[1]$, and the collection of distinguished triangles satisfies a set of axioms that capture the homological behaviour of KK-theory.
Here the \emph{shift} (or suspension, translation) functor $A\mapsto A[1]$ is the endoequivalence of $\KK^G$  given by $A[1]=C_0(\R)\otimes A$. By the Bott isomorphism $C_0(\R)\otimes C_0(\R)\cong C_0(\R)$, this functor is its own quasi-inverse.
Using a standard trick, it is always possible to ``correct'' the shift functor making it a (strict) automorphism (see~\cite{meyer_nest_bc}*{\S2.1} and~\cite{keller-vossieck:sous}*{\S2}). Therefore, in order to simplify notation, we shall pretend that~$(\variable)[1]\colon \KK^G\to \KK^G$ is strictly invertible, with $[2] \smash{\stackrel{\textrm{def.}}{=}} [1]\circ [1] =\id_{\KK^G} $.

The triangulated category $\KK^G$ is also endowed with a compatible symmetric monoidal structure $\KK^G\times \KK^G\to \KK^G$, which is induced by the spatial tensor product $A\otimes B$ of $\Cstar$-algebras on which~$G$ acts diagonally (in fact, we have already used this to define the shift functor). The unit object $\unit^G$ (or simply~$\unit$ if no confusion arises) is the algebra~$\C$ of complex numbers with the trivial $G$-action.

The tensor product is not the only construction at the $\Cstar$-algebraic level that extends to a triangulated functor on the Kasparov categories. 
For instance, there is an evident restriction functor 
\[
\Res{G}{H} \colon \KK^G \to \KK^H
\]  
for every subgroup $H\leqslant G$, which commutes with coproducts and is also (strict) symmetric monoidal: $\Res{G}{H}(A\otimes B)=\Res{G}{H}(A)\otimes \Res{G}{H}(B)$ and $\Res{G}{H}(\unit^G)=\unit^H$. 
If~$H$ is closed in~$G$ there is also a coproduct-preserving induction functor 
\[
\Ind{H}{G} \colon \KK^H \to \KK^G
\]  
which on each $H$-$\Cstar$-algebra $A\in \KK^H$ is given by the function $\Cstar$-algebra
\[
\Ind{H}{G}(A) = \{  G\stackrel{\varphi}{\to} A \mid h\varphi(xh)=\varphi(x) \,\forall x\in G,h\in H ; (xH \mapsto \|\varphi(x)\|)\in C_0(G/H) \}
\]
equipped with the $G$-action $(g\cdot \varphi)(x):= \varphi(g^{-1}x)$ ($g,x\in G$).
If $G/H$ is discrete, then induction is left adjoint to restriction, \ie, there is a natural isomorphism
\[
\KK^G( \Ind{H}{G}A, B)\cong \KK^H(A, \Res{G}{H}B)
\]
for all $A\in \KK^G$ and $B\in \KK^H$. 
Interestingly, if instead $G/H$ is compact then induction is \emph{right} adjoint to restriction. 
There is also a \emph{Frobenius isomorphism}
\begin{align}
\label{frob:Cstar}
 \Ind{H}{G}(A) \otimes B
 \cong
 \Ind{H}{G}(A \otimes \Res{G}{H}(B)) 
\end{align}
natural in $A\in \KK^H$ and $B\in \KK^G$.

The induction and restriction functors will be used constantly in this article. 
For every subgoup $H\leqslant G$ and every element $g\in G$, we will also consider the \emph{conjugation functor}
\[
{}^g(\variable) \colon \KK^H \to \KK^{\,{}^g\!H}
\]
which sends the $H$-$\Cstar$-algebra~$A$ to the ${}^g\! H$-$\Cstar$-algebra ${}^gA$ whose underlying $\Cstar$-algebra is just~$A$, equipped with the ${}^gH$-action  $ghg^{-1}a:= ha$ ($h\in H, a\in A$). 
Like restriction -- and for the same reasons -- each conjugation functor preserves coproducts, triangles and tensor products. Moreover, it is an isomorphism of tensor triangulated categories with inverse ${}^{g^{-1}}\!(\variable)$.

\subsection{The category of $G$-cell algebras}
For every closed subgroup $H\leqslant G$, we have the ``standard orbit'' $G$-$\Cstar$-algebra $C_0(G/H)$.
The idea is that $G$-cell algebras are those (separable) $G$-$\Cstar$-algebras that can be produced out of these by applying all the standard operations of triangulated categories.

Although we will be mostly concerned with finite groups, in this subsection we briefly study $G$-cell algebras  in greater generality, for future reference.

\begin{defi} \label{defi:CWG}
We define the \emph{Kasparov category of $G$-cell algebras} to be the localizing triangulated subcategory of $\KK^G$ generated by all~$C_0(G/H)$, in symbols: 
\begin{equation*}
\Cell^G:= \big\langle \; C_0(G/H) \mid H\leqslant G \;  \big\rangle_{\loc} \quad \subseteq \quad \KK^G.
\end{equation*}
This means that $\Cell^G$ is the smallest triangulated subcategory of~$\KK^G$ that contains all~$C_0(G/H)$ and is closed under the formation of infinite direct sums.
\end{defi}

\begin{remark}
The same notion of $G$-cell algebra is considered in~\cite{koehler:uct}, and is proposed as a $\KK$-analogue of $G$-CW-complexes. 
An even better analogy would be ``cellular objects'' in a (model) category of equivariant spaces, where the order of attachment of the cells is completely free, like here.
In order to obtain a more rigid notion of noncommutative  $G$-CW-complexes -- which would serve similar purposes as in the commutative case -- one should rather extend to the equivariant setting the definition of noncommutative CW-complexes of~\cite{nccw}. 
\end{remark}

\begin{remark}
\label{remark:closure}
The class $\Cell^G$ contains many interesting $G$-$\Cstar$-algebras, although this may not be apparent from the definition. 
For instance if~$G$ is a compact (non necessarily connected) Lie group, by \cite{koehler:uct}*{Thm.~9.5} the class $\Cell^G$ contains all separable commutative $G$-$\Cstar$-algebras and is closed under the usual bootstrap operations, in the sense that it enjoys the following closure properties:
\begin{enumerate}
\item For every extension $J\rightarrowtail A \twoheadrightarrow B$ of nuclear separable $G$-$\Cstar$-algebras, if two out of $\{J,A,B\}$ are in $\Cell^G$ then so is the third.
\item $\Cell^G$ is closed under the formation (in the category of $G$-$\Cstar$-algebras and $G$-equivariant $*$-homomorphisms) of colimits of countable inductive system of nuclear separable $G$-$\Cstar$-algebras.  
\item $\Cell^G$ is closed under exterior equivalence of $G$-actions.
\item $\Cell^G$ is closed under $G$-stable isomorphisms.
\item $\Cell^G$ is closed under the formation of crossed products with respect to~$\Z$- and~$\R$-actions that commute with the given $G$-action.
\end{enumerate}
Next, we show that much of the functoriality of equivariant KK-theory descends to $G$-cell algebras. 
\end{remark}

\begin{lemma}
\label{label:closure_loc_tens}
Let $\mathcal T$ be a triangulated category equipped with a symmetric tensor product which preserves coproducts \textup(whatever are available in~$\mathcal T$\textup) and triangles.  
Then $\langle \mathcal E \rangle_\loc \otimes \langle\mathcal F \rangle_\loc \subseteq \langle \mathcal E\otimes \mathcal F\rangle_\loc$ for any two subclasses $\mathcal E,\mathcal F\subseteq \mathcal T$.
\end{lemma}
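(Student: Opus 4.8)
The plan is to run a two-step dévissage, fixing one tensor factor at a time and exploiting that $\langle\variable\rangle_\loc$ denotes the \emph{smallest} localizing subcategory containing a given class. Write $\mathcal{E}\otimes\mathcal{F} := \{\,E\otimes F \mid E\in\mathcal{E},\ F\in\mathcal{F}\,\}$ and abbreviate $\mathcal{L} := \langle\mathcal{E}\otimes\mathcal{F}\rangle_\loc$, so that the goal is $X\otimes Y\in\mathcal{L}$ for all $X\in\langle\mathcal{E}\rangle_\loc$ and $Y\in\langle\mathcal{F}\rangle_\loc$.

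First I would fix an object $F\in\mathcal{F}$ and consider the full subcategory
\[
\mathcal{S}_F := \{\, X\in\mathcal{T} \mid X\otimes F \in \mathcal{L} \,\}.
\]
The key point is that $\mathcal{S}_F$ is a localizing triangulated subcategory of~$\mathcal{T}$. Indeed, by hypothesis the endofunctor $(\variable)\otimes F\colon\mathcal{T}\to\mathcal{T}$ is triangulated (it commutes with the shift and sends distinguished triangles to distinguished triangles) and preserves whatever coproducts exist in~$\mathcal{T}$; since $\mathcal{L}$ is closed under the shift, under cones, and under coproducts and contains $0$, its preimage $\mathcal{S}_F$ inherits all these closure properties. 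Moreover $\mathcal{E}\subseteq\mathcal{S}_F$, because $E\otimes F\in\mathcal{E}\otimes\mathcal{F}\subseteq\mathcal{L}$ for every $E\in\mathcal{E}$. By minimality of $\langle\mathcal{E}\rangle_\loc$ we conclude $\langle\mathcal{E}\rangle_\loc\subseteq\mathcal{S}_F$; that is, $X\otimes F\in\mathcal{L}$ for all $X\in\langle\mathcal{E}\rangle_\loc$ and all $F\in\mathcal{F}$.

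Next I would fix $X\in\langle\mathcal{E}\rangle_\loc$ and repeat the argument in the other variable, now using symmetry of the tensor product. Consider
\[
\mathcal{S}'_X := \{\, Y\in\mathcal{T} \mid X\otimes Y \in \mathcal{L} \,\}.
\]
Since $X\otimes Y\cong Y\otimes X$ naturally, the functor $X\otimes(\variable)$ is again triangulated and coproduct-preserving, so $\mathcal{S}'_X$ is localizing by the same reasoning as before. By the previous step $\mathcal{F}\subseteq\mathcal{S}'_X$, hence $\langle\mathcal{F}\rangle_\loc\subseteq\mathcal{S}'_X$, which is exactly the statement that $X\otimes Y\in\mathcal{L}$ for all $Y\in\langle\mathcal{F}\rangle_\loc$. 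Letting $X$ range over $\langle\mathcal{E}\rangle_\loc$ gives $\langle\mathcal{E}\rangle_\loc\otimes\langle\mathcal{F}\rangle_\loc\subseteq\langle\mathcal{E}\otimes\mathcal{F}\rangle_\loc$.

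There is no serious obstacle here: the only thing to check carefully is that the preimage of a localizing subcategory under a coproduct-preserving triangulated endofunctor is again localizing, and this is immediate from the definitions. The whole proof is just the standard principle that the localizing subcategory generated by a class is detected by testing objects against that class, applied once in each tensor slot.
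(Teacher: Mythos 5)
Your proof is correct and takes essentially the same route as the paper: fix one tensor slot, observe that the preimage of the target localizing subcategory under the coproduct-preserving triangulated functor $(\variable)\otimes F$ is localizing and contains $\mathcal E$, conclude, then repeat in the other slot using symmetry. The only cosmetic difference is that the paper's first step targets $\langle\mathcal E\otimes B\rangle_\loc$ and then passes to $\langle\mathcal E\otimes\mathcal F\rangle_\loc$, whereas you work with $\langle\mathcal E\otimes\mathcal F\rangle_\loc$ directly from the start; both are fine.
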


\begin{proof}
First, we claim that 
\begin{equation}
\label{partial_cont}
\langle\mathcal E\rangle_\loc \otimes \mathcal F \subseteq \langle \mathcal E \otimes \mathcal F \rangle_\loc.
\end{equation}
For every object $B\in \mathcal T$, the functor $\variable \otimes B$ commutes with coproducts and triangles by hypothesis. 
Thus
$\mathcal S_B:=\{A \in \mathcal T\mid A\otimes B \subseteq \langle\mathcal E\otimes B \rangle_\loc \}$ 
is a localizing triangulated subcategory of~$\mathcal T$, which moreover contains~$\mathcal E$; hence $\langle \mathcal E\rangle_\loc \subseteq \mathcal S_B$.
Therefore for every $B\in \mathcal F$ we have
$\langle \mathcal E\rangle_\loc \otimes B 
 \subseteq \langle \mathcal E \otimes B \rangle_\loc
  \subseteq \langle \mathcal E \otimes \mathcal F\rangle_\loc $, from which~\eqref{partial_cont} follows.
Similarly, for every $A\in \mathcal T$ we see that $\mathcal U_A:=\{B\in \mathcal T\mid A \otimes B \subseteq \langle \mathcal E\otimes \mathcal F\rangle_\loc \}$ is localizing, and therefore also 
 $\mathcal U:=\{B\in \mathcal T \mid \langle \mathcal E\rangle_\loc \otimes B \subseteq \langle \mathcal E\otimes \mathcal F\rangle_\loc \} = \bigcap_{A\in \langle \mathcal E\rangle_\loc} \mathcal U_A$. 
 By~\eqref{partial_cont}, $\mathcal U$ contains $\mathcal F$, so it must contain $\langle \mathcal F\rangle_\loc$. This was precisely the claim.
\end{proof}

\begin{prop}
\label{prop:cell_G}
Assume that~$G$ is a discrete group or a compact Lie group. 
Then~$\Cell^G$ is a tensor-triangulated subcategory of~$\KK^G$. 
Moreover, all restriction, induction and conjugation functors  $\Res{G}{H}$,  $\Ind{H}{G}$ and~${}^g(-)$, descend to the appropriate Kasparov subcategories of cell algebras. 
\end{prop}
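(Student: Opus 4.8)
The plan is to reduce both assertions to a check on the generators $C_0(G/K)$. Recall from \S\ref{subsec:res_ind_con} that $\Res{G}{H}$, $\Ind{H}{G}$ and each conjugation functor ${}^{g}(\variable)$ are triangulated (exact) and preserve coproducts. Hence for any such functor $F\colon \KK^{G}\to\KK^{G'}$ and any localizing subcategory $\mathcal L\subseteq\KK^{G'}$, the full subcategory $\{A\in\KK^{G}\mid F(A)\in\mathcal L\}$ is again localizing; so to see that $F$ maps $\Cell^{G}$ into $\mathcal L$ it is enough to verify $F(C_0(G/K))\in\mathcal L$ for every (closed) $K\leqslant G$. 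For the tensor-triangulated claim, Lemma~\ref{label:closure_loc_tens} already gives $\Cell^{G}\otimes\Cell^{G}\subseteq\langle\, C_0(G/H)\otimes C_0(G/K)\mid H,K\leqslant G\,\rangle_{\loc}$, so there it suffices to show $C_0(G/H)\otimes C_0(G/K)\cong C_0(G/H\times G/K)$ lies in $\Cell^{G}$ (diagonal $G$-action).

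Induction and conjugation are immediate on generators. The $G$-space induced from the $H$-space $H/K$ is $G\times_{H}(H/K)\cong G/K$, so $\Ind{H}{G}(C_0(H/K))\cong C_0(G/K)\in\Cell^{G}$ (this can also be read off the defining formula for $\Ind{H}{G}$). For conjugation, ${}^{g}(C_0(H/K))$ is $C_0(H/K)$ equipped with the ${}^{g}H$-action $ghg^{-1}\cdot x:=hx$, and the map $hK\mapsto(ghg^{-1})\,{}^{g}K$ is a ${}^{g}H$-equivariant homeomorphism $H/K\xrightarrow{\ \sim\ }({}^{g}H)/({}^{g}K)$, so ${}^{g}(C_0(H/K))\cong C_0(({}^{g}H)/({}^{g}K))\in\Cell^{{}^{g}H}$.

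For restriction we split into the two cases. If $G$ is discrete, then $\Res{G}{H}C_0(G/K)=C_0(G/K)$ with the restricted left $H$-action, and as an $H$-space $G/K\cong\coprod_{g\in[H\backslash G/K]}H/(H\cap{}^{g}K)$; hence $\Res{G}{H}C_0(G/K)\cong\bigoplus_{g\in[H\backslash G/K]}C_0(H/(H\cap{}^{g}K))$, a countable coproduct of standard orbit algebras for $H$, and therefore belongs to $\Cell^{H}$. If instead $G$ (hence $H$) is a compact Lie group, then $\Res{G}{H}C_0(G/K)$ is the algebra $C(G/K)$ of a compact space, i.e.\ a separable commutative $H$-$\Cstar$-algebra, which lies in $\Cell^{H}$ by \cite{koehler:uct}*{Thm.~9.5} (see Remark~\ref{remark:closure}).

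Finally, for $C_0(G/H)\otimes C_0(G/K)\cong C_0(G/H\times G/K)$: when $G$ is discrete, the diagonal $G$-set decomposes as $G/H\times G/K\cong\coprod_{g\in[H\backslash G/K]}G/(H\cap{}^{g}K)$, so the tensor product is $\bigoplus_{g\in[H\backslash G/K]}C_0(G/(H\cap{}^{g}K))\in\Cell^{G}$; when $G$ is compact Lie, $C(G/H\times G/K)$ is again separable commutative, so we conclude by \cite{koehler:uct}*{Thm.~9.5} (alternatively, by the Frobenius isomorphism~\eqref{frob:Cstar} one has $C_0(G/H)\otimes C_0(G/K)\cong\Ind{H}{G}(\unit^{H})\otimes C_0(G/K)\cong\Ind{H}{G}\,\Res{G}{H}C_0(G/K)$, reducing to the restriction and induction cases already treated). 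The only step that is not a purely formal manipulation of the triangulated structure is the compact Lie case, where one genuinely invokes that separable commutative $G$-$\Cstar$-algebras are $G$-cell algebras; in the discrete case everything rests on the elementary orbit and double-coset decompositions of $G/K$ and $G/H\times G/K$.
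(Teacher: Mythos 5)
Your argument is correct and matches the paper's proof essentially step for step: reduce to generators using that the functors are exact and coproduct-preserving, identify $\Ind{H}{G}C_0(H/K)\cong C_0(G/K)$ and ${}^gC_0(H/K)\cong C_0({}^gH/{}^gK)$, use the double-coset decomposition of $\Res{G}{H}(G/K)$ and of $G/H\times G/K$ in the discrete case, and fall back on Remark~\ref{remark:closure} for commutative algebras in the compact Lie case, with Lemma~\ref{label:closure_loc_tens} handling the tensor-subcategory claim. The only cosmetic differences are that you spell out the "preimage of a localizing subcategory is localizing" reduction (which the paper leaves implicit) and you add an alternative Frobenius route for the tensor of two generators.
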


\begin{proof}
The tensor unit $\unit^G=C_0(G/G)$ belongs to $\Cell^G$.
Moreover, for all closed subgroups $H,L\leqslant G$, the algebra 
$C_0(G/H)\otimes C_0(G/L) \cong C_0(G/H \times G/L)$ belongs again to $\Cell^G$. Indeed, if $G$ is a compact Lie group this follows from Remark~\ref{remark:closure} because the algebra is commutative; if~$G$ is discrete, then it follows simply by applying  the coproducts-preserving functor~$C_0$ to the orbit decomposition of $G$-sets
\[
G/H \times G/L 
\cong
 \coprod_{x\in [H\backslash G/L]} G/(H\cap {}^xL) \,.
\] 
We conclude by Lemma~\ref{label:closure_loc_tens}, with $\mathcal E=\mathcal F:= \{C_0(G/H) \mid H\leqslant G \textrm{ closed}\}$, that 
$\Cell^G \otimes \Cell^G
 = \langle \mathcal E \rangle_\loc \otimes \langle \mathcal E \rangle_\loc
  \subseteq \langle \mathcal E\otimes \mathcal E \rangle_\loc
  \subseteq \langle \mathcal E \rangle_\loc
  = \Cell^G$.
This proves that $\Cell^G$ is a tensor subcategory of $\KK^G$.  
  
 The induction functors satisfy $\Ind{H}{G}\circ \Ind{L}{H}\cong \Ind{L}{G}$ for all all  $L\leqslant H\leqslant G$, and each $\Ind{G}{H}$ commutes with triangles and coproducts. Thus $\Ind{H}{G}(C_0(H/L))=\Ind{H}{G}\circ \Ind{L}{H}(\C) = C_0(G/L)$, and we conclude that $\Ind{H}{G}(\Cell^H)\subseteq \Cell^G$.
Similarly, the identifications ${}^g C_0(G/H)\cong C_0({}^gG/ {}^gH)$ 
in $\KK^{{}^gG}$ for all $H\leqslant G$ show that ${}^g(\Cell^G)\subseteq \Cell^{{}^gG}$
(and thus $\Cell^G\cong \Cell^{{}^gG}$).
Finally, for~$G$ discrete the isomorphism of $H$-$\Cstar$-algebras 
\begin{equation*} \label{eq:mackey_C}
\Res{G}{H}C_0(G/L) 
= C_0(\Res{G}{H} G/L)
\; \;  \cong \bigoplus_{x\in [H\backslash G/L]} C_0(H/H\cap {}^xL)
\end{equation*}
shows that $\Res{G}{H} C(G/L)\in \Cell^H$ for all $L\leqslant G$ and therefore $\Res{G}{H}(\Cell^G)\subseteq \Cell^H$.
If $G$ is a compact Lie group, we notice that $\Res{G}{H}C(G/L)$ is commutative and appeal again to Remark~\ref{remark:closure}.  
\end{proof}


\begin{lemma} \label{lemma:self_dual}
For finite groups $H\leqslant G$, there is an isomorphism
\begin{equation*}
\KK^G(A \otimes C(G/H), B) \cong \KK^G (A, C(G/H)\otimes B)
\end{equation*}
natural in $A,B\in \KK^G$. 
\end{lemma}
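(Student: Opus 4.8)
The plan is to recognise $C(G/H)$ as the induced algebra $\Ind{H}{G}(\unit^H)$ and then combine the Frobenius isomorphism with the \emph{two} adjunctions between $\Res{G}{H}$ and $\Ind{H}{G}$ that are simultaneously available because $G/H$, being finite, is at once discrete and compact.

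First I would use the Frobenius isomorphism~\eqref{frob:Cstar} with the first argument equal to~$\unit^H$ to rewrite both sides as ``induced--restricted'' objects. On one hand
\[
A \otimes C(G/H) \cong \Ind{H}{G}(\unit^H) \otimes A \cong \Ind{H}{G}(\unit^H \otimes \Res{G}{H} A) = \Ind{H}{G}(\Res{G}{H} A),
\]
using the symmetry of $\otimes$ and $C(G/H)=\Ind{H}{G}(\unit^H)$; on the other hand, symmetrically, $C(G/H)\otimes B \cong \Ind{H}{G}(\Res{G}{H} B)$. Both identifications are natural in~$A$ and~$B$.

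Next I would chain the two adjunctions recalled in~\S\ref{subsec:res_ind_con}. Since $G/H$ is discrete, $\Ind{H}{G}$ is left adjoint to $\Res{G}{H}$, so
\[
\KK^G\big(\Ind{H}{G}(\Res{G}{H} A),\, B\big) \cong \KK^H\big(\Res{G}{H} A,\, \Res{G}{H} B\big);
\]
since $G/H$ is compact, $\Ind{H}{G}$ is also right adjoint to $\Res{G}{H}$, so
\[
\KK^H\big(\Res{G}{H} A,\, \Res{G}{H} B\big) \cong \KK^G\big(A,\, \Ind{H}{G}(\Res{G}{H} B)\big).
\]
Composing these with the two Frobenius identifications from the first step produces the desired natural isomorphism $\KK^G(A\otimes C(G/H),B)\cong\KK^G(A,C(G/H)\otimes B)$.

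All of the isomorphisms involved are natural in both variables, so naturality of the composite is automatic and needs no separate argument. The only place where the finiteness hypothesis is genuinely used is the simultaneous existence of both the left and the right adjunction between restriction and induction; this is the single point I would take care to highlight. (Equivalently, one may summarise the argument by saying that $C(G/H)$ is a self-dual rigid object of the tensor triangulated category~$\KK^G$, the self-duality being exactly the finiteness of $G/H$.) I do not anticipate a substantial obstacle here: the content is an assembly of facts already stated in~\S\ref{subsec:res_ind_con}, and the only mild care required is bookkeeping the symmetry isomorphism of $\otimes$ when matching $\Ind{H}{G}(\unit^H)\otimes A$ with $A\otimes C(G/H)$.
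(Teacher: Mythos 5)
Your proof is correct and is essentially the same as the paper's: both proofs identify $C(G/H)=\Ind{H}{G}(\unit^H)$, apply the Frobenius isomorphism to rewrite each side as an induced algebra, and then use the simultaneous left and right adjointness of $\Ind{H}{G}$ to $\Res{G}{H}$ (which holds because $G/H$ is both discrete and compact). The only difference is cosmetic: you absorb the tensor unit $\unit^H$ immediately, while the paper carries it through the chain and drops it at the end.
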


\begin{proof}
Since $G/H$ is finite, $\Ind{H}{G}$ and $\Res{G}{H}$ are adjoint to each other on both sides. We obtain the following composition of natural isomorphisms:
\begin{eqnarray*}
\KK^G(A \otimes C(G/H), B) 
 &= & \KK^G \big(A\otimes \Ind{H}{G}(\unit^H), B \big) \\
 &\cong &  \KK^G \big( \Ind{H}{G}(\Res{G}{H}( A) \otimes \unit^H), B \big) \\
 &\cong & \KK^H \big( \Res{G}{H} (A) \otimes \unit^H, \Res{G}{H} (B) \big) \\
 &\cong & \KK^H ( \Res{G}{H} (A) , \unit^H \otimes \Res{G}{H} (B)) \\
 &\cong & \KK^G ( A , \Ind{H}{G}( \unit^H \otimes \Res{G}{H} (B))) \\
 &\cong & \KK^G ( A , \Ind{H}{G}( \unit^H ) \otimes B )) \\
  &= & \KK^G ( A , C(G/H) \otimes B )).
 \end{eqnarray*}
 where we have also used Frobenius~\eqref{frob:Cstar} in the second and sixth lines.
 \end{proof}

The next proposition says that, at least when $G$ is finite, $G$-cell algebras form a rather nice tensor triangulated category.

\begin{prop} \label{prop:cptly_gen}
For every finite group $G$, the tensor triangulated category $\Cell^G$ is generated by the \textup(finite\textup) set $\{C(G/H), C(G/H)[1] \mid H\leqslant G\}$ of  rigid  and compact${}_{\aleph_1}$ objects, in the sense of~\cite{kkGarticle}.
In particular, $ \Cell^G$ is compactly${}_{\aleph_1}$ generated and its subcategory $\Cell^G_c$ of compact${}_{\aleph_1}$ objects coincides with that of its rigid objects, and is therefore a tensor triangulated subcategory. 
\end{prop}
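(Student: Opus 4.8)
The plan is to reduce everything to two properties of the proposed generators — rigidity and compactness${}_{\aleph_1}$ — and then to feed these into the general theory of compactly${}_{\aleph_1}$ generated tensor triangulated categories of \cite{kkGarticle}. That $\mathcal G:=\{C(G/H),C(G/H)[1]\mid H\leqslant G\}$ generates $\Cell^G$ as a localizing subcategory is immediate from Definition~\ref{defi:CWG}: adjoining the shifts changes nothing, and the set is finite since $G$ has only finitely many subgroups. So all the content lies in checking the two adjectives and in the bookkeeping of the abstract machine.

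First I would show that each $C(G/H)$ is a rigid object of $\Cell^G$, self-dual in the sense that $C(G/H)^\vee\cong C(G/H)$. The input is Lemma~\ref{lemma:self_dual}, which says that $\variable\otimes C(G/H)$ is right adjoint to itself on $\KK^G$, naturally in both variables; by the usual Yoneda argument one extracts an evaluation $C(G/H)\otimes C(G/H)\to\unit$ and a coevaluation $\unit\to C(G/H)\otimes C(G/H)$ from this adjunction isomorphism, and the triangle identities follow from its naturality (this is the characterization of dualizable objects recorded in \cite{kkGarticle}). Since by Proposition~\ref{prop:cell_G} the tensor product and the unit $\unit^G=C(G/G)$ remain inside $\Cell^G$, these are genuine rigid objects of $\Cell^G$ itself; and as rigid objects are stable under the shift (the latter being tensoring with the $\otimes$-invertible $C_0(\R)$), $C(G/H)[1]$ is rigid as well.

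Next I would check compactness${}_{\aleph_1}$. Using $\Ind{H}{G}\dashv\Res{G}{H}$ (valid since $[G:H]<\infty$), the identity $C(G/H)=\Ind{H}{G}(\unit^H)$, and the fact that $\Res{G}{H}$ preserves coproducts, one has for any countable family $(B_i)$
\[
\KK^G\!\Big(C(G/H),\;\textstyle\coprod_i B_i\Big)\;\cong\;\KK^H\!\Big(\unit^H,\;\textstyle\coprod_i\Res{G}{H}B_i\Big)\;\cong\;\textstyle\bigoplus_i K^H_0(\Res{G}{H}B_i),
\]
the last step because topological $K$-theory of $C^*$-algebras commutes with countable direct sums. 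Hence $C(G/H)$, and likewise $C(G/H)[1]$, is compact${}_{\aleph_1}$. Thus $\Cell^G=\langle\mathcal G\rangle_\loc$ is generated, as a localizing subcategory, by a \emph{set} $\mathcal G$ of compact${}_{\aleph_1}$ objects, and by the compact${}_{\aleph_1}$ version of Neeman's theorem as set up in \cite{kkGarticle} this makes $\Cell^G$ compactly${}_{\aleph_1}$ generated, with $\Cell^G_c$ equal to the thick subcategory $\langle\mathcal G\rangle_{\mathrm{thick}}$ generated by $\mathcal G$.

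Finally I would identify $\Cell^G_c$ with the class of rigid objects of $\Cell^G$. One inclusion is formal: the rigid objects form a thick subcategory (closed under shifts, cones and retracts) containing $\mathcal G$, hence contain $\langle\mathcal G\rangle_{\mathrm{thick}}=\Cell^G_c$. For the converse, if $X\in\Cell^G$ is rigid with dual $X^\vee$ then $\KK^G(X,\variable)\cong\KK^G(\unit,X^\vee\otimes\variable)$, which preserves countable coproducts since $\unit=C(G/G)$ is compact${}_{\aleph_1}$ and $X^\vee\otimes\variable$ preserves coproducts; so $X\in\Cell^G_c$. Therefore $\Cell^G_c$ is exactly the class of rigid objects, and being closed under $\otimes$ it is a tensor triangulated subcategory. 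I expect the only points requiring care to be precisely the two inputs above — extracting an honest duality (the triangle identities) from the self-adjunction of Lemma~\ref{lemma:self_dual}, and, mainly, verifying that $\Cell^G$, which carries only \emph{countable} coproducts, satisfies the hypotheses of the compact${}_{\aleph_1}$ generation formalism of \cite{kkGarticle} so that Neeman's description of the compact objects applies; everything else is bookkeeping with the adjunctions recorded in \S\ref{subsec:res_ind_con}.
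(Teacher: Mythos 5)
Your approach is essentially the same as the paper's: rigidity from Lemma~\ref{lemma:self_dual} together with $\unit^G=C(G/G)\in\mathcal G$, compactness from the $\Ind{H}{G}\dashv\Res{G}{H}$ adjunction, and then the compactly${}_{\aleph_1}$-generated machinery of~\cite{kkGarticle} for the ``in particular'' clause (which you usefully spell out, whereas the paper only says it ``follows formally'').

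There is, however, one genuine omission. In the sense of~\cite{kkGarticle} (see also Remark~\ref{rem:aleph} of this paper), an object $X$ is compact${}_{\aleph_1}$ when \emph{two} conditions hold: the functor $\KK^G(X,\variable)$ commutes with countable coproducts, \emph{and} $\KK^G(X,A)$ is a countable set for every $A$. You verify only the first. The second is the reason the paper invokes the Green--Julg theorem: it identifies $\KK^G(C(G/H),A)\cong K^H_0(\Res{G}{H}A)\cong K_0(H\ltimes\Res{G}{H}A)$, and since $H\ltimes\Res{G}{H}A$ is separable whenever $A$ is, the ordinary $K_0$-group on the right is countable. Without this cardinality bound you cannot quote the $\aleph_1$-relative Neeman formalism, and your identification of $\Cell^G_c$ with $\langle\mathcal G\rangle_{\mathrm{thick}}$ and with the rigid objects is left unsupported. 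The fix is one line (``$K^H_0$ of a separable $C^*$-algebra is countable''), but as written the argument is incomplete.
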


\begin{proof}
To prove the first part, consider the natural isomorphism
\[
\KK^G(C(G/H), A)
\cong \KK^H(\unit, \Res{G}{H}A) = K^H_0(\Res{G}{H}A) \cong K_0(H \ltimes \Res{G}{H}A)
\]
provided by the $\Ind{H}{G} \textrm- \Res{G}{H}$ adjunction and the Green-Julg theorem~\cite{phillips:free}*{\S2.6}. 
If~$A$ is separable, then so is the cross-product $H\ltimes \Res{G}{H}A$, from which it follows that the ordinary K-theory group on the right-hand side is countable; moreover, we see that $\KK^G(C(G/H), \variable)$ sends a coproduct in~$\KK^G$ to a coproduct of abelian groups. These two facts together state precisely that $C(G/H)$ is a compact${}_{\aleph_1}$ object of~$\KK^G$. The same follows immediately for the suspensions~$C(G/H)[1]$.

The second claim follows formally, whenever the set of compact${}_{\aleph_1}$ generators consists of rigid objets and contains the tensor unit. The latter is obvious, since $\unit^G=C(G/G)$, and it follows immediately from Lemma~\ref{lemma:self_dual} that each generator~$C(G/H)$ (and thus also each $C(G/H)[1]$) is rigid -- in fact, self-dual.
\end{proof}

\medbreak
Recall that a finite group is \emph{elementary} if it has the form $P\times C$, where  $C$ is cyclic and~$P$ is a $p$-group for some prime~$p$ not dividing the order of~$C$.

\begin{lemma}
\label{lemma:brauer_artin}
Let $A$ be a $G$-$\Cstar$-algebra for a finite group~$G$. Then:
\begin{enumerate}
\item If $K^E_*(\Res{G}{E} A)=0$ for all elementary subgroups $E\leqslant G$, then $K^G_*(A)=0$.
\item If $K^C_*(\Res{G}{E} A)\otimes_\Z \Q=0$ for all cyclic $C\leqslant G$, then $K^G_*(A)\otimes_\Z \Q=0$.
\end{enumerate}
\end{lemma}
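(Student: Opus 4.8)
The plan is to reduce both statements to the classical induction theorems of Brauer and Artin for the complex representation ring, which I will apply inside the endomorphism ring $R(G):=\KK^G_0(\unit^G,\unit^G)$ of the tensor unit. The goal is to exhibit $\unit^G$ as a retract, in the additive category $\KK^G$, of a finite direct sum of the ``orbit algebras'' $C(G/H)$ for $H$ ranging over the elementary (respectively cyclic) subgroups of $G$. Once that is done, applying the contravariant additive functor $\KK^G_*(\variable,A)$ turns the retract into a retract of $K^G_*(A)=\KK^G_*(\unit^G,A)$ inside the finite direct sum $\bigoplus_H\KK^G_*(C(G/H),A)\cong\bigoplus_H K^H_*(\Res{G}{H}A)$, where I invoke the adjunction isomorphism $\KK^G_*(C(G/H),A)\cong K^H_*(\Res{G}{H}A)$ already used in the proof of Proposition~\ref{prop:cptly_gen}; the vanishing hypothesis on the right-hand summands then forces $K^G_*(A)=0$ in case~(1) and $K^G_*(A)\otimes_\Z\Q=0$ in case~(2).

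To build the retraction I would proceed as follows. For $H\leqslant G$ the orbit $G/H$ is finite, so $\mathrm{Ind}_H^G$ is simultaneously left and right adjoint to $\Res{G}{H}$. Let $\eta_H\colon\unit^G\to\mathrm{Ind}_H^G\Res{G}{H}\unit^G=C(G/H)$ denote the unit of the adjunction with $\Res{G}{H}$ on the left, and, for a class $\xi\in R(H):=\KK^H_0(\unit^H,\unit^H)$, let $q_\xi\colon C(G/H)\to\unit^G$ be the morphism corresponding to $\xi$ under the bijection $\KK^G_0(C(G/H),\unit^G)\cong\KK^H_0(\unit^H,\unit^H)=R(H)$ provided by the adjunction with $\Res{G}{H}$ on the right. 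Put $\tau_{H,\xi}:=q_\xi\circ\eta_H\in R(G)$. The one step that genuinely requires care --- and which I expect to be the main obstacle --- is the identification
\[
\tau_{H,\xi}=\mathrm{Ind}_H^G(\xi)\quad\text{in }R(G),
\]
that is, the statement that $\xi\mapsto\tau_{H,\xi}$ is the classical induction homomorphism $R(H)\to R(G)$. For $\xi=1$ this amounts to saying that the composite $\unit^G\xrightarrow{\eta_H}C(G/H)\xrightarrow{q_1}\unit^G$, with $q_1$ the counit of the other adjunction, is multiplication by the class $[\C[G/H]]=\mathrm{Ind}_H^G(1)$ of the permutation representation; this is the usual description of induction in equivariant $K$-theory for an ambidextrous restriction--induction pair and can be verified by unwinding the two adjunctions, the general case of $\xi$ following from the projection formula $q_\xi\circ\eta_H=(q_1\circ\eta_H)\cdot\xi$, which is itself a formal consequence of the Frobenius isomorphism~\eqref{frob:Cstar} together with the naturality of $\eta$.

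Granting the identification, part~(1) is immediate: Brauer's induction theorem produces finitely many elementary subgroups $E_i\leqslant G$ and classes $\xi_i\in R(E_i)$ with $\sum_i\mathrm{Ind}_{E_i}^G(\xi_i)=1_{R(G)}$, hence $\sum_i\tau_{E_i,\xi_i}=\id_{\unit^G}$. Thus $(\eta_{E_i})_i\colon\unit^G\to\bigoplus_i C(G/E_i)$ is a split monomorphism in $\KK^G$ (with retraction assembled from the $q_{\xi_i}$), so $K^G_*(A)$ becomes a direct summand of $\bigoplus_i\KK^G_*(C(G/E_i),A)\cong\bigoplus_i K^{E_i}_*(\Res{G}{E_i}A)$, which vanishes by hypothesis; therefore $K^G_*(A)=0$. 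For part~(2) one substitutes Artin's induction theorem, which yields an integer $N\geqslant1$ (for instance $N=|G|$), finitely many cyclic subgroups $C_i\leqslant G$ and classes $\xi_i\in R(C_i)$ with $\sum_i\mathrm{Ind}_{C_i}^G(\xi_i)=N\cdot1_{R(G)}$, hence $\sum_i\tau_{C_i,\xi_i}=N\cdot\id_{\unit^G}$; after applying $\KK^G_*(\variable,A)\otimes_\Z\Q$, inside which $N$ is invertible, the same reasoning exhibits $K^G_*(A)\otimes_\Z\Q$ as a retract of $\bigoplus_i K^{C_i}_*(\Res{G}{C_i}A)\otimes_\Z\Q=0$, whence $K^G_*(A)\otimes_\Z\Q=0$.
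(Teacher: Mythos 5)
Your argument is correct, and it takes a genuinely different route from the paper's proof. The paper works entirely inside the abelian category of $R^G$-Mackey modules: it invokes Brauer's (resp.\ Artin's) induction theorem to write $1_{R(G)}=\sum_i\ind{E_i}{G}(x_i)$ (resp.\ $N\cdot 1_{R(G)}$), and then, for $x\in K^G_*(A)$, applies the module-theoretic Frobenius relation $\ind{E_i}{G}(x_i)\cdot x=\ind{E_i}{G}\!\bigl(x_i\cdot\res{G}{E_i}(x)\bigr)$ inside the $R^G$-Mackey module $k^G_*(A)$ constructed in Section~\ref{sec:K_mack}, with $\res{G}{E_i}(x)=0$ by hypothesis. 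You instead lift the argument to the triangulated category $\KK^G$ itself, exhibiting $\unit^G$ as a retract of $\bigoplus_i C(G/E_i)$ (integrally or rationally) and then applying the representable functor $\KK^G_*(\variable,A)$. If one traces your retraction through the adjunction isomorphisms, the resulting formula on $K^G_*(A)$ is precisely the paper's Frobenius computation, so the two proofs are in a sense linear duals of each other; but yours is stated at a strictly stronger, object-level, and in particular it yields the splitting of $\unit^G$ itself, which one could use to give a direct proof of Proposition~\ref{prop:elementary} without passing through the orthogonality criterion.

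One remark on the step you rightly flag as needing care, namely the identification $\tau_{H,\xi}=\ind{H}{G}(\xi)$. This is true, and it is exactly the statement that the Green functor structure on $R^G$ produced by the ambidextrous $(\Ind{H}{G},\Res{G}{H})$-adjunctions in $\KK^G$ agrees with the explicitly defined representation Green functor of \S\ref{sec:K_mack}; the ingredients are Remark~\ref{remark:unit_identification} (the isomorphism $k^G(\unit)\cong R^G$ of Mackey functors) together with the compatibility of $k^G$ with the induction adjunction as in Lemma~\ref{lemma:Ind_k}. However, the auxiliary ``projection formula'' you write, $q_\xi\circ\eta_H=(q_1\circ\eta_H)\cdot\xi$, has a type mismatch: the left-hand side and $q_1\circ\eta_H$ live in $R(G)$ while $\xi\in R(H)$, so the product on the right is not defined as written. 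The clean statement is simply $\tau_{H,\xi}=q_1\circ\Ind{H}{G}(\xi)\circ\eta_H=\ind{H}{G}(\xi)$, which one proves directly from the adjunction description of $q_\xi$ as $\epsilon_{\unit^G}\circ\Ind{H}{G}(\xi)$ and the identification of the categorical transfer $\epsilon\circ\Ind{}{}(\variable)\circ\eta$ with the Mackey-theoretic induction; no detour through a projection formula is needed, and attempting to reduce the general $\xi$ to $\xi=1$ in the way you sketch would require precisely the Frobenius identity you are trying to avoid. This is a fixable slip in the writeup, not a gap in the strategy.
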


\begin{proof}
(1)
Denote by $\mathcal E(G)$ the set of all elementary subgroups of~$G$.
 Brauer's classical induction theorem (\cite{bensonI}*{Thm.~5.6.4 and p.~188}) says that the homomorphism
 $
\sum_{E} \ind{E}{G} \colon \bigoplus_{E \in \mathcal E(G)} R(E) \to  R(G)
$ 
is surjective, where $R(H)$ denotes the representation ring of a finite group~$H$. In particular, there exist finitely many $E_i\in \mathcal E(G)$ and $x_i \in R(E_i)$, such that $1=\sum_i \ind{E_i}{G}(x_i)$ in~$R(G)$.
Now consider an $A\in \KK^G$ such that $K^H_*(\Res{G}{E}A)=0$ for all $E\in \mathcal E(G)$. 
Since the equivariant K-theory of~$A$ is a Mackey module over the representation ring (see Section~\ref{sec:K_mack}), we compute for every $x\in K^G_*(A)$
\[
x 
= 1_{R(G)}\cdot x 
= \sum_i \ind{E_i}{G}(x_i)\cdot x 
= \sum_i \ind{E_i}{G}( x_i \underbrace{\Res{G}{E_i}(x)}_{=\,0} ) 
=0
\]
by applying the vanishing hypothesis.
The proof of (2) is similar, but now we must use Artin's induction theorem instead.
\end{proof}

For finite~$G$, denote by $\Cell^G_\Q$ the rationalization of the category $\Cell^G$ which is compatible with countable coproducts, \ie, the one obtained by applying \cite{kkGarticle}*{Thm.~2.33} to $\mathcal T:=\Cell^G$ and $S:=(\Z\smallsetminus \{0\})\cdot 1_{\unit}$. 
Thus $\Cell^G_\Q$ is again a compactly${}_{\aleph_1}$ generated tensor triangulated category with the same objects, and it has the property that $\Cell^G_\Q(A,B)\cong \KK^G(A,B)\otimes_\Z\Q$ for all compact${}_{\aleph_1}$ algebras~$A\in \Cell^G_c$ (thus in particular 
$\Cell^G_\Q(C(G/H)[i], B)\cong K^H_i(\Res{G}{H}B)\otimes_\Z\Q$ for every $H\leqslant G$).

By meshing familiar tricks from the theory of Mackey functors and from the theory of triangulated categories, we obtain the following generation result for $G$-cell algebras and rational $G$-cell algebras.

\begin{prop}
\label{prop:elementary}
Let $G$ be a finite group. Then:
\begin{enumerate}
\item $\Cell^G = \langle C(G/H) \mid H \textrm{ is an elementary subgroup of }G  \rangle_\loc $.
\item $\Cell^G_\Q = \langle C(G/H) \mid H \textrm{ is a cyclic subgroup of } G  \rangle_\loc$.
\end{enumerate}
\end{prop}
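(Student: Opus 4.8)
The plan is to treat each claimed right-hand side as a localizing subcategory $\mathcal L$ of a compactly${}_{\aleph_1}$ generated triangulated category ($\Cell^G$, resp.\ $\Cell^G_\Q$) and to show $\mathcal L$ is everything by checking that its right orthogonal vanishes. The one non-formal input is the Brauer--Artin type statement already recorded as Lemma~\ref{lemma:brauer_artin}; everything else is manipulation with restriction functors and the localization theory of triangulated categories.

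\textbf{Part (1).} Set $\mathcal L:=\langle\, C(G/E)\mid E\ \textrm{elementary}\,\rangle_\loc\subseteq\Cell^G$; the inclusion $\mathcal L\subseteq\Cell^G$ is immediate. By Proposition~\ref{prop:cptly_gen}, $\Cell^G$ is compactly${}_{\aleph_1}$ generated by the finite set $\{C(G/H),C(G/H)[1]\mid H\leqslant G\}$ of compact${}_{\aleph_1}$ objects, and $\mathcal L$ is generated by the subset $\mathcal S:=\{C(G/E),C(G/E)[1]\mid E\ \textrm{elementary}\}$. A localizing subcategory of a compactly${}_{\aleph_1}$ generated category which is generated by a set of compact${}_{\aleph_1}$ objects is smashing (\cf~\cite{kkGarticle}), so the usual Bousfield localization argument yields $\mathcal L=\Cell^G$ as soon as $\mathcal L^\perp=0$ inside $\Cell^G$; moreover $\mathcal L^\perp=\mathcal S^\perp$ because ${}^\perp B$ is localizing for every object $B$. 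So let $B\in\mathcal S^\perp$. Via the $\Ind{E}{G}\textrm-\Res{G}{E}$ adjunction and $\KK^H_*(\unit,\variable)\cong K^H_*$, the condition $\KK^G_*(C(G/E),B)=0$ says $K^E_*(\Res{G}{E}B)=0$ for every elementary $E\leqslant G$. Now fix any subgroup $H\leqslant G$: every elementary subgroup of $H$ is an elementary subgroup of $G$, and restriction is transitive, so $K^E_*(\Res{H}{E}\Res{G}{H}B)=0$ for all elementary $E\leqslant H$; applying Lemma~\ref{lemma:brauer_artin}(1) to the $H$-$\Cstar$-algebra $\Res{G}{H}B$ gives $K^H_*(\Res{G}{H}B)=0$, that is, $\KK^G_*(C(G/H),B)=0$. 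As $H$ was arbitrary and the $C(G/H)[i]$ generate $\Cell^G$, we get $B=0$. Hence $\mathcal L^\perp=0$ and $\mathcal L=\Cell^G$.

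\textbf{Part (2).} The same proof works after rationalizing. In $\Cell^G_\Q$, which is compactly${}_{\aleph_1}$ generated with the same objects and the same generators $C(G/H)[i]$, put $\mathcal L_\Q:=\langle\, C(G/C)\mid C\ \textrm{cyclic}\,\rangle_\loc$; then $\mathcal L_\Q\subseteq\Cell^G_\Q$, and $\mathcal L_\Q=\Cell^G_\Q$ once $\mathcal L_\Q^\perp=0$. By the description of Hom groups recalled just before the proposition, namely $(\Cell^G_\Q)(C(G/H)[i],B)\cong K^H_i(\Res{G}{H}B)\otimes_\Z\Q$, an object $B\in\mathcal L_\Q^\perp$ satisfies $K^C_*(\Res{G}{C}B)\otimes_\Z\Q=0$ for all cyclic $C\leqslant G$; since cyclic subgroups of any $H\leqslant G$ are cyclic in $G$, Lemma~\ref{lemma:brauer_artin}(2) applied to $\Res{G}{H}B$ gives $K^H_*(\Res{G}{H}B)\otimes_\Z\Q=0$, hence $(\Cell^G_\Q)_*(C(G/H)[i],B)=0$; letting $H$ range over all subgroups, $B=0$. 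Thus $\mathcal L_\Q^\perp=0$ and $\mathcal L_\Q=\Cell^G_\Q$.

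\textbf{Main obstacle and an alternative.} Granting Lemma~\ref{lemma:brauer_artin}, the only real leverage point is the localization-theoretic fact that a localizing subcategory generated by a set of compact${}_{\aleph_1}$ objects is detected by its right orthogonal; this is the step that invokes external triangulated-category machinery. If one prefers to avoid it, there is a constructive route: Brauer's theorem lets one write, for each $K\leqslant G$, $1_{R(K)}=\sum_i\tau_i\circ\beta_i$ with $\beta_i\colon\unit^K\to C(K/E_i)$ and $\tau_i\colon C(K/E_i)\to\unit^K$ the canonical inclusion and transfer maps in $\KK^K$ for elementary $E_i\leqslant K$; this exhibits $\unit^K$ as a retract of $\bigoplus_i C(K/E_i)$, and applying the coproduct-preserving functor $\Ind{K}{G}$ (with $\Ind{K}{G}C(K/E_i)=C(G/E_i)$) makes $C(G/K)$ a retract of $\bigoplus_i C(G/E_i)\in\mathcal L$; since localizing subcategories of $\KK^G$ are idempotent complete, $C(G/K)\in\mathcal L$ for every $K\leqslant G$, whence $\Cell^G\subseteq\mathcal L$, and Part~(2) follows identically from Artin's theorem, worked out in the rationalization where the relevant integer is invertible. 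The delicate point in this second route is to know that the transfer $\tau_i$ induces the induction map $R(E_i)\to R(K)$ on the functors $\KK^K_*(\unit,\variable)$.
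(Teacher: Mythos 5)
Your proof is correct and follows essentially the same route as the paper: both reduce the claim to the fact that, in a compactly${}_{\aleph_1}$ generated triangulated category, a localizing subcategory generated by compact${}_{\aleph_1}$ objects is determined by its right orthogonal, and both then feed in Lemma~\ref{lemma:brauer_artin} to identify that orthogonal. You usefully make explicit the step the paper leaves tacit — applying Lemma~\ref{lemma:brauer_artin} to $\Res{G}{H}B$ for \emph{every} subgroup $H\leqslant G$, not just $H=G$, so that all the generators $C(G/H)[i]$ see $B=0$ — though the appeal to "smashing" is an unnecessary detour (generation by a set of compacts already gives $\mathcal L^\perp=0\Rightarrow\mathcal L=\mathcal T$ directly).
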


\begin{proof}

If $\mathcal T$ is a triangulated category with countable coproducts and if $\mathcal E_1,\mathcal E_2 \subseteq \mathcal T_c$ are two countable sets of compact objects which are closed under suspensions and desuspensions, then $\langle \mathcal E_1 \rangle_{\loc}=\langle \mathcal E_2 \rangle_{\loc}$ whenever $\mathcal E_1$ and $\mathcal E_2$ have the same right orthogonal in~$\mathcal T$, \emph{i.e.}, if  
$\mathcal E_1^\perp :=\{ B\in \mathcal T\mid \mathcal T(A,B)=0 \;\forall A\in \mathcal E_1\}$ 
equals $\mathcal E_2^\perp :=\{ B\in \mathcal T\mid \mathcal T(A,B)=0 \;\forall A\in \mathcal E_2\}  $
  (see  \cite{kkGarticle}*{\S2.1} for explanations).
Thus part~(1) follows immediately,  using $\mathcal T=\KK^G$ or $\mathcal T=\Cell^G$, by combining Proposition~\ref{prop:cptly_gen} with Lemma \ref{lemma:brauer_artin}~(1), while part~(2) uses Lemma \ref{lemma:brauer_artin}~(2) instead (and $\mathcal T= \Cell^G_\Q$).
\end{proof}

\section{Recollections on Mackey and Green functors}
\label{sec:mackey_green}

Throughout this section, we fix a finite group~$G$.

Mackey functors, and the related notions of Green functors and modules over them, can be defined from various different point of views. 
The three most important (all of which are treated in detail in~\cite{bouc:green_functors}) are the definition in terms of subgroups of~$G$, that in terms of $G$-sets, and that in terms of functor categories. 

Since we are going to need all three of them, let us proceed without further delay.

\subsection{The subgroup picture}
\label{subsec:mackey}
This is the most concrete of the three points of view.
A \emph{Mackey functor $M$ \textup(for~$G$\textup)} consists of a family of abelian groups~$M[H]$, one for each subgroup~$H\leqslant G$, 
together with a restriction homomophism $\res{H}{L}\colon M[H]\to M[L]$ and an induction homomorphism $\ind{L}{H}\colon M[L]\to M[H]$ for all $L\leqslant H \leqslant G$, and a conjugation homomorphism $\con_{g, H}\colon M[H]\to M[{}^gH]$ for all $g\in G$ and all $H\leqslant G$. These three families of maps must satisfy the following six families of relations:
\begin{align*}
& \res{H}{L} \res{G}{H} = \res{G}{L} 
 \quad, \quad \ind{H}{G} \ind{L}{H} = \ind{L}{G} 
  & 
  (L\leqslant H\leqslant G)  
  \\
& \con_{f,{}^gH} \, \con_{g,H} = \con_{fg, H}  
  & 
  (f,g\in G, \, H\leqslant G) 
  \\
& \con_{g,H} \, \ind{L}{H} = \ind{{{}^gL}}{{{}^gH}} \, \con_{g,L}  
   &
   (g\in G,\, L\leqslant H\leqslant G)
   \\
 & \con_{g,L} \res{H}{L} = \res{{{}^gH}}{{{}^gL}} \con_{g,H}  
  &
   (g\in G,\, L\leqslant H\leqslant G)
    \\
& \ind{H}{H} = \res{H}{H} = \con_{h,H}= \id_{M[H]}   
    & 
    (h\in H,\, H\leqslant G)  
    \\
 & \res{H}{L} \ind{K}{H} =   
 \sum_{x \in [L\backslash G / K]} \ind{L}{L\cap {}^x K} \, \con_{x ,L^x \cap K} \, \res{K}{L^x\cap K}
    &
     (L,K \leqslant H \leqslant G)  
\end{align*}
The last relation is the \emph{Mackey formula}.
A \emph{morphism} $\varphi: M\to N$ of Mackey functors is a family of $k$-linear maps $\varphi[H]: M[H]\to N[H]$ which commute with restriction, induction and conjugation maps in the evident way.

A \emph{\textup(commutative\textup) Green functor} is a Mackey functor~$R$ such that 
each~$R[H]$ carries the structure of a (commutative) associative unital ring, the restriction and conjugation maps are unital ring homomorphisms, and the following \emph{Frobenius formulas} hold:
\[
 \ind{L}{H}(\res{H}{L}(y)\cdot x) = y \cdot \ind{L}{H}(x)
 \quad , \quad
 \ind{L}{H}(x\cdot \res{H}{L}(y)) = \ind{L}{H}(x) \cdot y
\]
for all $L\leqslant H\leqslant G$, $x\in R[L]$ and $y\in R[H]$.
Similarly, a \emph{\textup(left\textup) Mackey module over~$R$} (or simply \emph{$R$-module}) is a Mackey functor $M$ where each $M[H]$ carries the structure of a (left) $R[H]$-module, in such a way that: 
\begin{align*}
& \res{H}{L}(r\cdot m) = \res{H}{L}(r)\cdot \res{H}{L}(m)  
 && (L\leqslant H\leqslant G , r\in R[H] , m\in M[H]) \\
& \con_{g,H}(r\cdot m) = \con_{g,H}(x)\cdot \con_{g,H}(m)
 && (g\in G, H\leqslant G,r\in R[H], m\in M[H])  \\
& r\cdot \ind{L}{H}(m) = \ind{L}{H}(\res{H}{L}(r)\cdot m)
 && (L\leqslant H\leqslant G, r\in R[H] , m\in M[L]) \\
& \ind{L}{H}(r)\cdot m = \ind{L}{H}(r\cdot \res{H}{L}(m)) 
 && (L\leqslant H\leqslant G, r\in R[L] , m\in M[H])
\end{align*}
A morphism of $R$-Mackey modules,  $\varphi \colon M\to M'$, is a morphism of the underlying Mackey functors such that each component $\varphi[H]$ is $R[H]$-linear.  
We will denote by 
\[
R\textrm- \Mack 
\]
the category of $R$-Mackey modules. 
We will see that it is a Grothendieck abelian category with a projective generator, and that it has a nice tensor product when $R$ is commutative.

\begin{example}
The \emph{Burnside ring} Green functor,~$R=Bur$, is defined by setting $Bur[H]:= K_0(H\textrm-\set)$, the Grothendieck ring of the category of finite $H$-sets with~$\sqcup$ and~$\times$  yielding sum and multiplication, and with the structure maps induced by the usual restriction, induction and conjugation operations for $H$-sets.
It turns out that $Bur$ acts uniquely on all Mackey functors, so that $Bur\textrm-\Mack$ is just~$\Mack$, the category of Mackey functors. (This is analogous to $\Z\textrm-\Mod=\Ab$).
\end{example}

\begin{remarks}

Instead of using abelian groups for the base category, it is often useful in applications to allow more general abelian categories, such as modules over some base commutative ring~$k$, possibly graded. 
It is straightforward to adapt the definitions. 
For our applications, it will sometimes be useful to let our Mackey functors take values in the category of $\Z/2$-graded abelian groups and degree preserving homomorphisms.
(A similar remark holds for the two other pictures.)
\end{remarks}

\subsection{The $G$-set picture}
The second picture is in terms of ``bifunctors'' on the category of finite $G$-sets. 
Now a Mackey functor is defined to be a pair of functors $M=(M^\star,M_\star)$ from $G$-sets to abelian groups, with $M^\star$ contravariant and $M_\star$ covariant, having the same values on objects: $M^\star(X)=M_\star(X)=: M(X)$ for all $X\in G\textrm-\set$.
Moreover, two axioms have to be satisfied:
\begin{enumerate}

\item $M$ sends every coproduct $X\to X\sqcup Y \gets Y$ to a direct-sum diagram in~$\Ab$.

\item $M^\star(g)M_\star(f)=M_\star(f')M^\star(g')$ for every pull-back square 
$
\xymatrix@C=10pt @R=10pt{\cdot \ar[d]_{g'} \ar[r]^-{f'} & \cdot \ar[d]^g \\ \cdot \ar[r]^-f & \cdot }
$ 
in~$G\textrm-\set$.
\end{enumerate}
Morphisms are natural transformations $\varphi=\{\varphi(X)\}_X$, where naturality is required with respect to both functorialities.  
Every Mackey functor in this new sense determines a unique Mackey functor in the previous sense, by setting
\[
M[H] := M(G/H)
\]
and
$\res{H}{L}:= M^\star (G/L\twoheadrightarrow G/H)$, 
$\ind{L}{H}:= M_\star(G/L\twoheadrightarrow G/H)$ and
$\con_{g,H}:= M^\star({}^g\! H\cong H)= M_\star(H\cong {}^g\! H)$.
Conversely, by decomposing each $G$-set into orbits we see how a Mackey functor in the old sense determines an (up to isomorphism, unique) Mackey functor in the new sense.


\subsection{The functorial picture and the Burnside-Bouc category $\mathcal B_R$}
\label{subsec:functorial}
Since Lindner~\cite{lindner:remark}, it is known that one can ``push'' the two functorialities of Mackey functors into the domain category, so that Mackey functors are -- as their name would suggest --  just ordinary (additive) functors on a suitable category.
It was proved by Serge Bouc that a similar trick can be performed also for Mackey modules, as follows
(see\footnote{
Beware that we prefer to use the opposite category, thinking of presheaves, so that Bouc's original notation $\mathcal C_A$ denotes the same category as our~\((\mathcal B_R)^\op\) (his $A$ being the Green functor~$R$). This is rather immaterial though, in view of the isomorphism $\mathcal B_R \cong (\mathcal B_R)^\op$
} 
\cite{bouc:green_functors}*{\S3.2}).

For any Mackey functor~$M$ and any finite $G$-set~$X$, let~$M_X$ be the Mackey functor which, in the $G$-set picture, is given by 
\[
M_X(Y):= M(Y \times X) 
\qquad (Y\in G\textrm-\set).
\]

Let~$R$  be a Green functor. 
If $M$ is an $R$-module, then $M_X$ inherits a natural structure of $R$-module, and the assignment $M\mapsto M_X$ extends to an endofunctor on $R\textrm-\Mack$ which is its own right and left adjoint  (\cite{bouc:green_functors}*{Lemma~3.1.1}).

 By \cite{bouc:green_functors}*{Prop.~3.1.3}, there is an isomorphism
\begin{align}
\label{bouc_yoneda}
\alpha_{X,M}  
\colon R\textrm-\Mack (R_X, M) \cong M(X) 
\end{align}
natural in $X\in G\textrm-\set$ and $M\in R\textrm-\Mack$. 
This looks suspiciously like the Yoneda lemma. In fact, it can be \emph{turned} into the Yoneda lemma!
It suffices to define an (essentially small $\Z$-linear) category~$\mathcal B_R$ as follows. 
Its objects are the finite $G$-sets, and its morphism groups are defined by $\mathcal B_R(X, Y):= R(X\times Y)$. The composition of morphisms in~$\mathcal B_R$ is induced by that of the category of $R$-Mackey modules, via the natural bijection~$\alpha_{X,M}$. 
The resulting embedding $\mathcal B_R\to R\textrm-\Mack$, $X\mapsto R_X$, extends along the (additive) Yoneda embedding $\mathcal B_R \to \Ab^{(\mathcal B_R)^\op}$, $X\mapsto \mathcal B_R(\variable,X)$, to an equivalence of categories  (\cite{bouc:green_functors}*{Theorem 3.3.5})
\[
\Ab^{(\mathcal B_R)^\op} \simeq R\textrm-\Mack \,.
\] 
Thus the functor $\mathcal B_R\to R\textrm-\Mack$ sending~$X$ to~$R_X$ is identified with the Yoneda embedding, and \eqref{bouc_yoneda} with the Yoneda lemma.
In particular, the category of Mackey modules over~$R$ is an abelian functor category, and we see that the representables  $R_{G/H}$ ($H\leqslant G$) furnish a finite set of projective generators.

\begin{example}
\label{ex:Bur}
For the Burnside ring $R=Bur$, the category~$\mathcal B_{Bur}$ is just the \emph{Burnside category}~$\mathcal B$, which has finite $G$-sets for objects, Hom sets $\mathcal B(X,Y) = K_0($G$\textrm-\set \downarrow X\times Y) $, and composition induced by the pull-back of $G$-sets. 
We recover this way Lindner's picture $\Mack\simeq \Ab^{\mathcal B^{\op}}$ of Mackey functors.
The product $X\times Y$ of $G$-sets clearly provides a tensor product (\emph{i.e.}, a symmetric monoidal structure) on~$\mathcal B$ with unit object~$G/G$. By the theory of Kan extensions (\emph{i.e.}, ``Day convolution''~\cite{day}), there is, up to canonical isomorphism, a unique closed symmetric monoidal structure on the presheaf category $\Ab^{\mathcal B^{\op}}$ which makes the Yoneda embedding $\mathcal B \hookrightarrow \Ab^{\mathcal B^\op}$ a symmetric monoidal functor. 
This is usually called the \emph{box product} of Mackey functors and is denoted by~$\boxpr$. 
It turns out that a Green functor is quite simply a monoid ($=$ ring object) in the tensor category $(\Mack, \boxpr, Bur)$, and it follows that one can study the whole subject of Green functors and Mackey modules from the categorical point of view; it is the fruitful approach taken by L.\,G.~Lewis~\cite{lewis:mimeo}.
\end{example}

\subsection{The tensor abelian category of $R$-Mackey modules}
\label{subsec:tensor}
If we consider a commutative Green functor~$R$ to be a commutative monoid in $(\Mack, \boxpr, Bur)$, as in Example~\ref{ex:Bur}, then the tensor product $M \,\boxpr_R\, N$ of two $R$-modules $M$ and~$N$
with structure maps $\rho_M: R \,\boxpr\, M\to M$ and $\rho_N: R \,\boxpr\, N\to N$, respectively, is defined by the following coequalizer in~$\Mack$
\[
\xymatrix{
M \,\boxpr\, R \,\boxpr\, N \ar@<0.5ex>[rr]^-{M\,\boxpr\, \rho_N} 
 \ar@<-0.5ex>[rr]_-{(\rho_M \circ \gamma) \,\boxpr\, N} && 
 M \,\boxpr\, N \ar[r] &
 M \,\boxpr_R\, N
} \,,
\]
where $\gamma$ denotes the symmetry isomorphism of the box product. 
Concretely, the value of $M\boxpr_R N$ at a $G$-set~$X$ is the quotient
\[
(M \,\boxpr_R\, N)(X) = \left( \bigoplus_{\alpha \colon Y\to X} M(Y)\otimes_\Z M(Y) \right) / \mathcal J ,
\]
where the sum is over all $G$-maps into~$X$, and where $\mathcal J$ is the subgroup generated by the elements
\begin{align*}
M_\star(f)(m)\otimes n' - m\otimes N^\star(f)(n') 
\quad & , \quad
M^\star(f)(m')\otimes n - m' \otimes N_\star(f)(n) \;, \\
m \cdot r \otimes n & - m \otimes r\cdot n
\end{align*}
for all $r\in R(Y)$, $m\in M(Y)$, $m' \in M(Y')$, $n\in N(Y)$, $n' \in N(Y')$
 and all morphisms $f\colon (Y, \alpha)\to (Y', \alpha')$ in the slice category $G\textrm- \set \!\downarrow\! X$, \ie, all $G$-maps $f\colon Y\to Y'$ such that $\alpha'\circ f= \alpha$
(see \cite{bouc:green_functors}*{\S6.6}).
 
As usual, this extends to define a closed symmetric monoidal structure on $R\textrm-\Mack$ with unit object~$R$. 
The internal Hom functor 
$\inthom_R(\variable,\variable) \colon (R\textrm-\Mack)^\op \times R\textrm-\Mack\to R\textrm-\Mack$, which of course is characterized by the natural isomorphism
\begin{align}
\label{inthom_adj}
R\textrm-\Mack (M\boxpr_R N , L) \cong R\textrm-\Mack(M, \inthom_R(N,L)) \,,
\end{align}
has also the following more concrete, and rather useful, description:
\begin{align}
\label{inthom}
\inthom_R (M,N)(X) = R\textrm- \Mack(M,N_X)
\end{align}
for every $G$-set~$X$ (see \cite{bouc:green_functors}*{Prop.~6.5.4}).

Finally, the tensor product extends to \emph{graded} $R$-Mackey modules $M$ and~$N$ by the familiar formula
\[
(M \,\boxpr_R\, N)_\ell := \bigoplus_{i+j = \ell} M_i \,\boxpr_R\, N_j \,.
\]
We will consider grading by an infinite or finite cyclic group~$\Z/\pi$ ($\pi\in\N$), \cf~\S\ref{sec:rel_hom}. 

\begin{remark}
\label{rem:alt_def_tensor}
It follows from the natural isomorphism $R_X\boxpr_R R_Y \cong R_{X\times Y}$ (see \cite{lewis:mimeo}*{Prop.~2.5}) that the tensor product restricts to representable modules in the functorial picture $\smash{R\textrm-\Mack \simeq \Ab^{(\mathcal B_R)^\op} }$, inducing a tensor product on $\mathcal B_R$ which is simply $X\times Y$ on objects. Therefore, we may recover~$\boxpr_R$ as the Day convolution product extending the tensor structure of~$\mathcal B_R$ back to all $R$-modules.
\end{remark}

\subsection{Induction and restriction of Mackey functors}
\label{subsec:ind_res_mac}

Just for a moment, let us see what happens if we allow the group~$G$ to vary. 
Given a Mackey functor~$M$ for~$G$ and a subgroup~$G'\leqslant G$, there is an evident restricted Mackey functor for~$G'$, written $\Res{G}{G'}(M)$, which is simply $\Res{G}{G'}(M)[H]:= M[H]$ at each $H\leqslant G'$. 
We obtain this way a functor $\Res{G}{G'}$ from the category of Mackey functors for~$G$ to the category of Mackey functors for~$G'$.
In particular, we see that if $R$ is a Green functor for~$G$ then $\Res{G}{G'}(R)$ is a Green functor for~$G'$, and that restriction may be considered as a functor $\Res{G}{G'}\colon R\textrm- \Mack \to \Res{G}{G'}(R)\textrm- \Mack$. 

Interestingly, there is also an induction functor $\Ind{G'}{G}$ going the opposite way which is both left and right adjoint to $\Res{G}{G'}$ (see \cite{bouc:green_functors}*{$\S$8.7}). 
It can be constructed as follows: given a $\Res{G}{G'}(R)$-module~$M$ and an $H\leqslant G$, set 
\begin{equation}
\label{ind_subgps}
\Ind{G'}{G}(M)[H]
:= \bigoplus_{a\in [G'\backslash G/H]} M[G'\cap {}^aH].
\end{equation}
Each summand is made into an $R[H]$-module in the evident way, that is, via the composite ring homomorphism 
$
\con_{a,G'^a\cap H} \res{H}{G'^a\cap H}\colon R[H]\to R[G'\cap {}^a H].
$
In the subgroup picture of Mackey functors, we have the following simple formulas:
\[
\Ind{G'}{G}(M)(X)= M(\Res{G}{G'} X)
\quad , \quad
\Res{G}{G'}(N)(Y)=N(\Ind{G'}{G} Y)
\]
for all $G$-sets~$M$ and $G'$-sets~$N$. 

These restriction and induction functors for Mackey modules satisfy ``higher versions'' of the expected relations. 
For instance, there is a Mackey formula isomorphism (\cite{thevenaz-webb:structure}*{Prop.~5.3}), as well as the following Frobenius isomorphism (see also \cite{bouc:green_functors}*{\S10.1} for more general results of this type).

\begin{prop}
\label{prop:higher_frob}
There is a natural isomorphism of $R$-Mackey modules
\begin{equation*}
\Ind{G'}{G}(M) \,\boxpr_R\, N 
\cong \Ind{G'}{G} \big( M \,\boxpr_{\Res{G}{G'}(R)} \, \Res{G}{G'}(N) \big)
\end{equation*}
for all $N\in R\textrm- \Mack$ and~$M\in \Res{G}{G'}(R)\textrm- \Mack$.
\end{prop}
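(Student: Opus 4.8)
The plan is to reduce everything to the Yoneda/Day-convolution description of $\boxpr_R$ and the adjoint-functoriality of $\Ind{G'}{G}$ and $\Res{G}{G'}$, rather than to grind out the quotient-by-$\mathcal{J}$ description at each $G$-set. First I would record the two facts we are allowed to use: (i) by Remark~\ref{rem:alt_def_tensor}, $\boxpr_R$ is the Day convolution product on $R\textrm-\Mack\simeq \Ab^{(\mathcal B_R)^\op}$ coming from the monoidal structure $X\times Y$ on $\mathcal B_R$, and in particular $R_X\boxpr_R R_Y\cong R_{X\times Y}$; (ii) $\Ind{G'}{G}$ and $\Res{G}{G'}$ on Mackey modules are given, in the $G$-set picture, by $\Ind{G'}{G}(M)(X)=M(\Res{G}{G'}X)$ and $\Res{G}{G'}(N)(Y)=N(\Ind{G'}{G}Y)$, and $\Ind{G'}{G}$ is both left and right adjoint to $\Res{G}{G'}$.

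The cleanest route is to verify the isomorphism after applying $R\textrm-\Mack(\variable,L)$ for an arbitrary $L\in R\textrm-\Mack$ and invoke Yoneda. On the left-hand side, using the $\boxpr_R$--$\inthom_R$ adjunction \eqref{inthom_adj}, the description \eqref{inthom} of the internal Hom, and the $\Ind{G'}{G}$--$\Res{G}{G'}$ adjunction, we get
\[
R\textrm-\Mack\big(\Ind{G'}{G}(M)\boxpr_R N,\;L\big)
\cong R\textrm-\Mack\big(\Ind{G'}{G}(M),\;\inthom_R(N,L)\big)
\cong \Res{G}{G'}(R)\textrm-\Mack\big(M,\;\Res{G}{G'}\inthom_R(N,L)\big).
\]
On the right-hand side, the same two adjunctions give
\[
R\textrm-\Mack\big(\Ind{G'}{G}(M\boxpr_{\Res{G}{G'}(R)}\Res{G}{G'}N),\;L\big)
\cong \Res{G}{G'}(R)\textrm-\Mack\big(M,\;\inthom_{\Res{G}{G'}(R)}(\Res{G}{G'}N,\;\Res{G}{G'}L)\big),
\]
using that $\Res{G}{G'}$ is right adjoint to $\Ind{G'}{G}$ as well. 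So the proposition reduces to a natural isomorphism
\[
\Res{G}{G'}\inthom_R(N,L)\;\cong\;\inthom_{\Res{G}{G'}(R)}(\Res{G}{G'}N,\;\Res{G}{G'}L)
\]
of $\Res{G}{G'}(R)$-modules, \ie\ to the statement that restriction of Mackey modules is a \emph{closed} monoidal functor (compatibility with internal Hom). This in turn I would check on representables and on a $G'$-set $Y$ using \eqref{inthom}: both sides evaluated at $Y$ are $R\textrm-\Mack$, resp.\ $\Res{G}{G'}(R)\textrm-\Mack$, Hom groups, and the computation unwinds via the formula $(N_Z)$ at $Y\times?$ together with $\Ind{G'}{G}(Y\times Z)\cong \Ind{G'}{G}(Y)\times Z$ (the projection formula for $G$-sets, which is the $G$-set shadow of the Frobenius isomorphism), so the two internal Homs agree; one then promotes this from representables to all modules by the fact that every Mackey module is a colimit of representables and both internal-Hom functors send colimits in the first variable to limits.

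The main obstacle, as usual in this circle of ideas, is bookkeeping rather than substance: one has to make sure that all the adjunction isomorphisms above are natural in the right variables simultaneously, and — more delicately — that the $R[H]$-module structures (equivalently, the $R$-linearity built into $\boxpr_R$ and $\inthom_R$) are respected under restriction, since $\Res{G}{G'}(R)$ is a genuinely different Green functor from $R$. Concretely, the one honest verification is the projection-formula identity $\Ind{G'}{G}(Y)\times Z\cong\Ind{G'}{G}(Y\times\Res{G}{G'}Z)$ for $G'$-sets $Y$ and $G$-sets $Z$, together with its compatibility with the double-coset decomposition \eqref{ind_subgps}; granting that, everything else is formal. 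An alternative, if one prefers to avoid internal Homs, is to argue directly on representables: by the equivalence $R\textrm-\Mack\simeq\Ab^{(\mathcal B_R)^\op}$ it suffices to exhibit the isomorphism when $M=R'_{G'/K}$ and $N=R_{G/H}$, where it becomes, after using $\Ind{G'}{G}R'_{G'/K}\cong R_{G/K}$ and $R_{G/K}\boxpr_R R_{G/H}\cong R_{(G/K)\times(G/H)}$, precisely the Mackey decomposition of $(G/K)\times(G/H)$ into orbits $G/(K\cap{}^xH)$ matched against the decomposition \eqref{ind_subgps}; then extend to all modules by Day convolution (which commutes with the colimits presenting $M$ and $N$ as colimits of representables). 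I would present the internal-Hom argument as the main proof and relegate the representable computation to a one-line remark, since the former makes the $R$-linearity bookkeeping essentially automatic.
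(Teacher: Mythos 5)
Your proposal is correct and lives in the same formal universe as the paper's proof: both argue by Yoneda against an arbitrary $R$-module $L$, both iterate the $(\Ind,\Res)$ and $(\boxpr,\inthom)$ adjunctions, and both ultimately feed on the $G$-set Frobenius isomorphism $\Ind(X)\times Y\cong\Ind(X\times\Res Y)$. The difference lies in how the adjunctions are paired off and, consequently, in which ``closedness'' lemma one is left to verify. The paper moves $N$ across $\boxpr$ and is reduced to the Frobenius-type identity $\Ind\inthom_{\Res(R)}(M,\Res L)\cong\inthom_R(\Ind M,L)$ (its \eqref{crucial_ident_hom}), whose proof uses the level-wise identification $\Res(L_Y)\cong(\Res L)_{\Res Y}$ (its \eqref{res_augm}). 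You instead move $M$ across $\Ind$ first, and are reduced to the statement that $\Res$ is a \emph{closed} monoidal functor, $\Res\,\inthom_R(N,L)\cong\inthom_{\Res(R)}(\Res N,\Res L)$. These two lemmas are not formally interchangeable without the projection formula (the very thing being proved), so you genuinely prove a different intermediate fact; but when you verify your lemma on a representable $N=R_Y$, the computation is exactly the paper's \eqref{res_augm}, so the two proofs are using the same core computation, repackaged. What your route buys is a conceptually familiar slogan (``restriction preserves internal Hom''); what the paper's buys is a slightly shorter chain, since \eqref{crucial_ident_hom} is proved directly rather than reduced further to representables. One bookkeeping caveat to flag: your ``promote to all $N$ by colimits'' step is fine because $\inthom(-,L)$ turns colimits into limits and $\Res$ preserves both colimits and limits (being a two-sided adjoint), but you should also check that the resulting isomorphism is $\Res(R)$-linear and natural in $L$ (Yoneda requires naturality in $L$); both follow from the naturality of the $G$-set Frobenius isomorphism, but it is worth saying explicitly.
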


\begin{proof}
We will use for this proof the $G$-set picture of Mackey functors.
Since there is no ambiguity, we will drop the decorations on all induction and restriction functors in order to avoid clutter.
Let us start -- innocently enough -- with a much more evident Frobenius isomorphism, namely, the natural isomorphism of $G$-sets
\[
\Ind{}{}(X) \times  Y \cong \Ind{}{} (X \times \Res{}{}Y)
\]
that exists for all $G'$-sets~$X$ and all $G$-sets~$Y$. 
It follows from this that, for an arbitrary $L\in R\textrm-\Mack$, we may identify 
\begin{equation}
\label{res_augm}
\Res{}{}(L_Y)\cong \Res{}{}(L)_{\Res{}{} Y} 
\end{equation}
because of the computation
\begin{align*}
\Res{}{}(L_Y)(X) 
&= L_Y (\Ind{}{} X) \\
&= L((\Ind{}{}X) \times Y) \\
&\cong L(\Ind{}{}(X \times \Res{}{} Y)) \\
&= \Res{}{} L(X \times \Res{}{}Y) \\
&= (\Res{}{}L)_{\Res{}{} Y}(X).
\end{align*}
Next, we claim the existence of a natural isomorphism
\begin{equation}
\label{crucial_ident_hom}
\Ind{}{} \inthom_{\Res{}{}(R)} (M , \Res{}{} L ) 
\cong \inthom_{R} (\Ind{}{} M ,  L )
\end{equation}
for all $M$ and~$L$.
Indeed, evaluating at every $Y\in G\textrm-\set$  we find
\begin{align*}
\Ind{}{} \inthom_{\Res{}{}(R)} (M , \Res{}{} L ) (Y)
& =  \inthom_{\Res{}{}(R)} (M , \Res{}{} L ) (\Res{}{} Y) \\
& = \Res{}{}(R)\textrm- \Mack ( M , \Res{}{} (L)_{\Res{}{} Y} ) \\
& \cong \Res{}{}(R)\textrm- \Mack ( M , \Res{}{} (L_Y) ) \\
& \cong R\textrm- \Mack ( \Ind{}{} M, L_Y ) \\
& = \inthom_R( \Ind{}{} M, L)(Y)
\end{align*}
by using~\eqref{inthom} in the second and in the last lines, \eqref{res_augm} in the third line, and the $(\Ind{}{}, \Res{}{})$-adjunction in the fourth.
Finally, there is a natural isomorphism
\begin{align*}
R\textrm-\Mack \big( \Ind{}{} (M \boxpr_{\Res{}{}(R)} \Res{}{} N), L \big) 
& \cong \Res{}{}(R)\textrm- \Mack \big( M\boxpr_{\Res{}{}(R)} \Res{}{}N, \Res{}{} L \big) \\
& \cong \Res{}{}(R) \textrm- \Mack \big( \Res{}{} N , \inthom_{\Res{}{}(R)} (M , \Res{}{} L )  \big) \\
&\cong R\textrm- \Mack \big( N , \Ind{}{} \inthom_{\Res{}{}(R)} (M , \Res{}{} L ) \big) \\
& \cong R\textrm- \Mack \big( N ,  \inthom_{R} (\Ind{}{} M ,  L ) \big) \\
& \cong R\textrm- \Mack \big( (\Ind{}{}M ) \boxpr_R N ,   L \big)
\end{align*}
by consecutive application of the $(\Ind{}{}, \Res{}{})$-adjunction, the $(\boxpr, \inthom)$-adjunction \eqref{inthom_adj}, the $(\Ind{}{}, \Res{}{})$-adjunction once again, the isomorphism~\eqref{crucial_ident_hom}, and the other $(\boxpr, \inthom)$-adjunc\-tion.
Since this isomorphism is natural in~$L$ and since~$L$ is an arbitrary $R$-module, we conclude by Yoneda the existence of a natural isomorphism
\begin{equation*}
\Ind{}{}(M)\boxpr_{R} N 
\cong \Ind{}{} \left( M \boxpr_{\Res{}{}(R)} \Res{}{}(N) \right)
\end{equation*}
of $R$-modules.
\end{proof}

\section{Equivariant K-theory as a Mackey module}
\label{sec:K_mack}

\subsection{The representation Green functor}

Let us describe the commutative Green functor that will concern us here, the \emph{representation Green functor}, that we denote~$R^G$. 
It is also one of the most classical examples. 


By definition, the value $R^G[H]$ at the subgroup $H\leqslant G$ is the complex representation ring~$R(H):=K_0(\C H\textrm-\mod)$.  
Addition is induced by the direct sum of modules and multiplication by their tensor product over the base field~$\C$, equipped with the diagonal $G$ action. 
For $L\leqslant H\leqslant G$, the restriction maps $\res{H}{L}: R(H)\to R(L)$ are defined by restricting the action of a $\C H$-module to~$\C L$ via the inclusion $\C L\to \C H$, and the induction maps $\ind{L}{H}:R(L)\to R(H)$ are defined by
the usual induction of modules, $M\mapsto \C H\otimes_{\C L}M$ ($M\in \C L\textrm-\mod $).
The conjugation maps $\conj_{g,H}: R(H)\to R({}^gH)$, similarly to the restriction maps, are induced by precomposition with the isomorphisms 
$\C {}^g\!H\to \C H$, $x\mapsto g^{-1}xg$.
The verification that $R^G$ satisfies the axioms of a commutative Green functor is an easy exercise, and follows immediately from general text-book properties of modules over group rings (\emph{e.g.}~\cite{bensonI}*{$\S$3.3}).

\subsection{Equivariant K-theory}
\label{subsec:equiv_Kth}

For every separable $G$-$\Cstar$-algebra $A\in \Cstarsep^G$, we want to define a $\Z/2$-graded $R^G$-Mackey module 
\[
k^G_*(A):=\{ K^H_\epsilon(\Res{G}{H} (A)) \}^{H\leqslant G}_{\epsilon\in \Z/2}
\]
by collecting all its topological $K$-theory groups. 
In order to describe the structure maps of this $R^G$-module as concretely as possible, we now briefly recall from \cite{phillips:free}*{$\S$2} the definition of equivariant $K$-theory in terms of (Banach) modules.

Assume first that $A$ is unital.
A \emph{$(G,A)$-module}~$E$ consists of a right module $E$ over the ring~$A$, together with a representation $G\to \mathcal L(E)$ of~$G$ by continuous linear operators on~$E$, such that $g(ea)=(ge)(ga)$ for all $g\in G, e\in E, a\in A$. Of course, for $\mathcal L(E)$ to make sense, $E$ must be endowed with a topology; we do not belabor this point, because we will be exclusively concerned with modules that are projective and finitely generated over~$A$, and which therefore inherit a Banach space structure (and a unique topology) from that of~$A$.  

The direct sum of two $(G,A)$-modules is defined in the evident way with the diagonal $G$-action, and a morphism of $(E,A)$-modules is a continuous $A$-module map $\varphi: E\to E'$ commuting with the $G$-action: $\varphi(ge)=g\varphi(e)$ for all $g\in G, e\in E$.

Let $\tilde K^G_0(A)$ be the Grothendieck group of isomorphism classes of finitely generated $A$-projective $(G,A)$-modules, with addition induced by the direct sum.
If $V$ is a finite dimensional $\C G$-module and $E$ a $(G,A)$-module, we may equip the tensor product $V\otimes_\C E$ with the diagonal $G$-action $g(v\otimes e):=gv \otimes ge$ and the right $A$-action $(v\otimes e)a:=v\otimes ea$ ($g\in G, v\in V, e\in E, a\in A$),  thereby inducing a left $R(G)$-action on the abelian group~$\tilde K^G_0(A)$.  
The assignment $A\mapsto \tilde K^G_0$ becomes a covariant functor from unital $G$-$\Cstar$-algebras to $R(G)$-modules by extension of scalars; indeed, given a unital $G$-equivariant *-homo\-morph\-ism $f:A\to B$ and a $(G,A)$-module~$E$, we  equip the finitely generated projective $B$-module $E\otimes_AB$ with the $G$-action $g(e\otimes b):=ge\otimes gb$.

If $A$ is a general, possibly non unital, $G$-$\Cstar$-algebra, then by the usual trick we set $K^G_0(A):=\ker (\tilde K^G_0 (\pi_A: A^+\to \C ))$, where $\pi_A$ is the natural augmentation on the functorial unitization $A^+$ of~$A$. Then $K^G(A)= \tilde K^G(A)$ for unital~$A$, and $K^G_0$ yields a functor $\Cstaralg^G \to R(G)\textrm-\Mod$ on $G$-$\Cstar$-algebras.

\subsection{The $R^G$-Mackey module~$k^G(A)$}

We now define our K-theory Mackey module.  
For all $A\in \Cstaralg^G$ and $H\leqslant G$, set 
\[
k^G(A)[H]:= K^H_0(\Res{G}{H}(A)) \, .
\] 
For the definition of the structure maps, assume at first that $A$ is unital.

The restriction maps $\res{H}{L}$ are simply induced by restricting the $H$-action on $(H,A)$-modules to~$L$, as in the case of~$R^G$.
The conjugation maps are as follows. 
Given an $(H,\Res{G}{H}A)$-module~$E$, let $\Con_g(E)$ denote~$E$ equipped with the $H$- and $A$-actions
\[
(h \,{}^g\!\cdot e):= g^{-1}hge
\quad , \quad
(e \cdot^g a):= e (g^{-1}a) 
\quad\quad (e\in E, h\in H, g\in G).
\]

\begin{lemma}
\label{lemma:conj}
The above formulas proide a well-defined $({}^gH, \Res{G}{\, {}^g\! H}(A))$-module $\Con_g E$, and the assignment $E\mapsto \Con_g(E)$ induces a well-defined $R(H)$-linear homomorphism 
\[
\con_{g,H}\colon K^H_0(\Res{G}{H} A)\to K^{\,{}^g\! H}_0(\Res{G}{\,{}^g\! H} A).
\]
Moreover, 
$\con_{f,{}^gH} \, \con_{g,H} = \con_{fg, H}  $
for all  $f,g\in G, \, H\leqslant G$, 
and $\con_{g,H}=\id_{K^{H}_0(A)}$ whenever $g\in H$.
\end{lemma}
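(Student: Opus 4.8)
The plan is to verify each of the three claims in turn, in all cases reducing to elementary manipulations with $(G,A)$-modules, and then to the known functoriality of $K^G_0$ from \S\ref{subsec:equiv_Kth}.

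\emph{Well-definedness of $\Con_g E$ as a $({}^gH,\Res{G}{\,{}^g\!H}A)$-module.} First I would check the module axiom $({}^g\!h)\bigl((e\cdot^g a)\bigr)=({}^g\!h\cdot e)(\,{}^g\!h\cdot a)$ for the conjugated actions. Writing an element of ${}^gH$ as $ghg^{-1}$ (so that $ghg^{-1}\,{}^g\!\cdot e=he$ by definition) and unwinding the $A$-action twist $e\cdot^g a=e(g^{-1}a)$, this is a direct computation using the original $(H,A)$-module axiom $h(ea)=(he)(ha)$ together with the fact that $\Res{G}{\,{}^g\!H}A$ carries the ${}^gH$-action $ghg^{-1}\cdot a=ha$; I would spell out one line and leave the rest as routine. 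Similarly, continuity of the operators and finite generation/projectivity over $A$ are unaffected, since the underlying Banach space and $A$-module structure are only modified by precomposition with the $*$-automorphism $a\mapsto g^{-1}a$, which is an isometric isomorphism $\Res{G}{\,{}^g\!H}A\to\Res{G}{H}A$ of $\Cstar$-algebras. Hence $\Con_g$ sends finitely generated $A$-projective $({}^gH$-twisted$)$ modules to finitely generated $A$-projective ones, commutes with finite direct sums and with morphisms (a continuous $A$-linear $H$-equivariant map stays such after the twist), and so descends to an additive homomorphism $\con_{g,H}$ on the Grothendieck groups in the unital case; the non-unital case follows by the usual unitization argument since $\Con_g$ is compatible with the augmentation $A^+\to\C$ (here one checks $\Con_g$ on $A^+$ restricts to $\Con_g$ on $A$ and fixes the $\C$-summand). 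The $R(H)$-linearity is the identity $\Con_g(V\otimes_\C E)\cong {}^g\!V\otimes_\C\Con_g(E)$ as $({}^gH,A)$-modules together with the fact that $\con_{g,H}^{R}\colon R(H)\to R({}^gH)$ is precisely the map induced by $V\mapsto{}^g\!V$; I would record this isomorphism explicitly and note it respects the right $A$-action.

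\emph{The cocycle relation $\con_{f,{}^gH}\,\con_{g,H}=\con_{fg,H}$.} Here I would simply compare the two module structures on the common underlying space $E$: on $\Con_f(\Con_g E)$ the element of ${}^{fg}H$ written $(fg)h(fg)^{-1}$ acts by $f^{-1}\!\bigl((fg)h(fg)^{-1}\bigr)f$ acting through $\Con_g E$, then $g^{-1}(\cdots)g$ through $E$, which collapses to $he$; likewise the $A$-action becomes $e\mapsto e\bigl((fg)^{-1}a\bigr)$. This is exactly the structure on $\Con_{fg}E$, so the two are equal as modules, not merely isomorphic, giving the relation on the nose. The final assertion, $\con_{g,H}=\id$ for $g\in H$, is the observation that for $g\in H$ we have ${}^gH=H$ and the twisted actions become $h\,{}^g\!\cdot e=g^{-1}hge$ and $e\cdot^g a=e(g^{-1}a)$; the $A$-linear map $e\mapsto ge\colon\Con_g E\to E$ is then a well-defined isomorphism of $(H,A)$-modules (check: $g(h\,{}^g\!\cdot e)=g(g^{-1}hge)=h(ge)$ and $g(e\cdot^g a)=g(e(g^{-1}a))=(ge)a$ using the $(H,A)$-module axiom), natural in $E$, hence induces the identity on $K^H_0$.

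\emph{Main obstacle.} None of the individual computations is deep; the one place to be careful is bookkeeping with the two competing conventions ${}^gH=gHg^{-1}$ versus $H^g=g^{-1}Hg$ fixed in the Conventions, and keeping straight whether the $A$-twist uses $g$ or $g^{-1}$ — a sign error there would break either $R(H)$-linearity or the cocycle identity. So the real work is choosing the twist direction consistently (as in the displayed formulas for $\Con_g$) and verifying that with that choice both the module axioms and the cocycle relation come out on the nose, and then invoking the already-established functoriality of $K^G_0$ to pass from $(G,A)$-modules to Grothendieck groups.
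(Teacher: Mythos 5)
Your proposal is correct, and it actually tightens a step that the paper treats somewhat cavalierly. The paper verifies compatibility of the two twisted actions (as you do), and then argues that $\Con_g E$ remains finitely generated $A$-projective by exhibiting an $A$-linear isomorphism $E\to \Con_g E$, $e\mapsto g^{-1}e$; but this formula presupposes a $G$-action on $E$, whereas $E$ is only an $(H,\Res{G}{H}A)$-module, so $g^{-1}e$ is not literally defined when $g\notin H$. Your alternative argument --- that $\Con_g E$ is exactly the restriction of scalars of $E$ along the $*$-automorphism $\alpha\colon a\mapsto g^{-1}a$ of $A$, and that restriction of scalars along a ring automorphism preserves finite generation and projectivity --- sidesteps this cleanly and is a more accurate way to phrase what is really going on (it is also implicit in the paper's Remark~\ref{rem:conj_alg}, where $g^{-1}$ is only applied to the algebra). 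You also spell out the checks the paper dismisses as "similarly straightforward": the cocycle relation $\Con_f\Con_g=\Con_{fg}$ as an equality of modules on the nose, the isomorphism $\Con_g E\to E$, $e\mapsto ge$, for $g\in H$ (here $ge$ does make sense since $g\in H$), and $R(H)$-linearity via $\Con_g(V\otimes_\C E)\cong{}^g\!V\otimes_\C \Con_g E$. The one cosmetic point worth flagging is that you write "$({}^gH,A)$-module" where the decorated form $({}^gH,\Res{G}{\,{}^g\!H}A)$-module would be more precise, and the codomain of your isomorphism $\alpha$ is more accurately ${}^g(\Res{G}{H}A)$ rather than $\Res{G}{H}A$; neither affects the argument.
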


\begin{proof}
The two actions are certainly well-defined (to see this for the $A$-action, recall that~$G$ acts on~$A$ by algebra homomorphisms, which must be unital if~$A$ is unital), and they are compatible by the computation
\[
h \,{}^g\!\cdot (e \cdot^g a)
= (g^{-1}hg)(e \cdot g^{-1}a) 
= (g^{-1}hg e) (g^{-1}hg g^{-1} a)
= (h \,{}^g\!\cdot e) \cdot^g (h \cdot a) 
\]
($h\in H, a\in A, e\in E$).
Let $E$ be finitely generated projective over~$A$. But then $\Con_gE$ is also finitely generated projective, because (ignoring the group actions) the map $E\to \Con_gE$, $e\mapsto g^{-1}e$, is an $A$-linear isomorphism:
\[
g^{-1}(e a) = (g^{-1}e)(g^{-1}a) = (g^{-1} e) \cdot^g a 
\qquad (e\in E, a\in A). 
\]
The rest is similarly straightforward.
\end{proof}

\begin{remark}
\label{rem:conj_alg}
Perhaps a more natural way to understand the conjugation maps is to note that every $(H,A)$-module~$E$ can be considered as an $({}^gH, {}^gA)$-module, say~${}^gE$, where ${}^gA$ is the ${}^gH$-$\Cstar$-algebra with underlying $\Cstar$-algebra~$A$ and with the ${}^gH$-action $ghg^{-1}\,{}^g\!\! \cdot a = ha$, as in~\S\ref{subsec:res_ind_con}. 
This is just as for the restriction maps: both group actions, that on~$E$ and that on~$A$, are precomposed with a group homomorphism, in this case the conjugation isomorphisms ${}^gH\to H$, $h\mapsto g^{-1}hg$ (for restriction, the inclusion of a subgroup).
Similarly, we let ${}^gA$ act on ${}^gE$ simply by $e\cdot a= ea$, just as $A$ acted on~$E$, and the compatibility condition for ${}^gE$ is trivially satisfied because it is for~$E$. 
Now note that, if the $H$-action on~$A$ comes from an action of the whole group~$G$, then the $\Cstar$-algebra isomorphism $g^{-1}\colon A\smash{\stackrel{\sim}{\to}} \,{}^gA$ provided by the action is $G$-equivariant, since 
$g^{-1}(ha) 
= (g^{-1}hg)(g^{-1}a)
= h\,{}^g\!\cdot (g^{-1}a)$ for all $h\in G$ and $a\in A$.
Clearly, the restriction of ${}^gE$ along $g^{-1}$ is precisely the $({}^gH,A)$-module $\Con_gE$ defined above (or, with extension of scalars: ${(g^{-1})}_*(\Con_g E)\cong {}^gE$).
\end{remark}

We now define the induction maps, following~\cite{phillips:free}*{$\S$5.1}.
Let $L\leqslant H \leqslant G$.
If $E$ is an $(L,A)$-module, we define an $(H,A)$-module
\[
\Ind{L}{H}(E):=\{\varphi\colon H\to E \mid \varphi(x\ell)=\ell^{-1}\varphi(x) \;\;\forall \ell\in L, x\in H\}
\]
with the following $A$- and $H$-actions:
\[
(\varphi \cdot a)(x):= \varphi (x)(x^{-1}a)
\quad, \quad (h\cdot \varphi)(x):=\varphi(h^{-1}x)
\]
for all $\varphi\in \Ind{L}{H}(E)$, $a\in A$, and $x,h\in H$.
By \cite{phillips:free}*{Prop.\ 5.1.3}, the resulting functor $E\mapsto \Ind{L}{H}(E)$ from $(L,A)$-modules to $(H,A)$-modules preserves finitely generated projectives, and the induced homomorphism 
\[
\ind{L}{H}\colon K^L_0(\Res{G}{L}A)\to K^H_0(\Res{G}{H}A)
\]
is $R(H)$-linear. (Here as always, we turn $K^L_0(\Res{G}{L}A)$ into an $R(H)$-module via the ring homomorphism $\res{H}{L}\colon R(H)\to R(L)$.)

\begin{remark}
\label{remark:unit_identification}
Note that, when $A=\C$ is the trivial $G$-$\Cstar$-algebra, there are evident canonical isomorphisms $K^H_0(\C)\cong R(H)$ ($H\leqslant G$) that identify the respective induction, restriction and conjugation maps. In other words, we can identify $k^G(\C)=R^G$ as Mackey functors.
\end{remark}

As usual with $\Cstar$-algebras, it is easy to use the functorial unitisation to extend the definitions of $\res{H}{L}$, $\ind{L}{H}$ and $\con_{g,H}$ to general, possibly nonunital, algebras~$A$. For instance, $\ind{L}{H}$ is the map induced on kernels in the following morphism of short exact sequences:
\[
\xymatrix{
K^H_0(\Res{G}{H}A) \ar@{ >->}[r] &
  K^H_0(\Res{G}{H} A^+) \ar@{->>}[r] & 
   K^H_0(\Res{G}{H} \C)= R(H) \\
 K^H_0(\Res{G}{L}A) \ar@{ >->}[r] \ar@{..>}[u]^{\ind{L}{H}} &
  K^H_0(\Res{G}{L} A^+) \ar@{->>}[r] \ar[u]_{\ind{L}{H}} & 
   K^H_0(\Res{G}{L} \C)= R(L) \ar[u]_{\ind{L}{H}}  
}
\]
and similarly for $\res{H}{L}$ and $\con_{g,H}$. 
Because of the naturality of the definition, it will suffice to verify equalities between restriction, conjugation and induction maps for the case of unital algebras.

\begin{lemma}
\label{lemma:mackey_formula_kG}
There is an isomorphism of $(H,A)$-modules
\[
\Res{G}{H}\Ind{L}{G}(E)
\cong \bigoplus_{z\in [H\backslash G/L]}
\Ind{H\cap {}^z\! L}{H} \Con_z \Res{L}{H^z\cap L}(E)
\]
for every $(L,A)$-module $E$ and all subgroups $H,L\leqslant G$.
Moreover, once the set of representatives $[H\backslash G/L]$ is fixed, the isomorphism is natural in~$E$.
\end{lemma}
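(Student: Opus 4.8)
The statement is the Mackey double-coset formula for equivariant $K$-theory at the level of $(G,A)$-modules, and the natural strategy is to reduce it to the purely combinatorial Mackey formula for $G$-sets (or for group actions on the set $H\backslash G/L$), exactly as one does for representations of finite groups. First I would record the key feature of the functor $E\mapsto \Ind{L}{G}(E)=\{\varphi\colon G\to E \mid \varphi(x\ell)=\ell^{-1}\varphi(x)\}$: as a Banach space (ignoring the $A$- and $G$-actions) it only depends on the set-theoretic fibration $G\to L\backslash G$, so $\Ind{L}{G}(E)$ is the space of sections of a ``bundle'' over $L\backslash G$ with fibre $E$. Restricting the $G$-action to $H$ amounts to decomposing $L\backslash G$ into $H$-orbits, i.e.\ writing $G=\coprod_{z\in[H\backslash G/L]} HzL$, and one gets a direct sum decomposition $\Res{G}{H}\Ind{L}{G}(E)\cong\bigoplus_{z}\Gamma_z$, where $\Gamma_z$ is the space of those $\varphi$ supported on the double coset $HzL$.

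The bulk of the proof is then identifying each summand $\Gamma_z$ with $\Ind{H\cap{}^z\!L}{H}\Con_z\Res{L}{H^z\cap L}(E)$ as an $(H,A)$-module. For this I would pick, for each fixed $z$, the isomorphism $\Gamma_z\xrightarrow{\ \sim\ }\Ind{H\cap{}^z\!L}{H}\Con_z\Res{L}{H^z\cap L}(E)$ sending $\varphi$ (supported on $HzL$) to the function $\psi\colon H\to \Con_z\Res{L}{H^z\cap L}(E)$ defined by $\psi(h):=\varphi(h^{-1}z)$ (up to a choice of normalisation and of which side one conjugates on, which I will fix once and for all). One must check:
\begin{itemize}
\item[(i)] $\psi$ satisfies the equivariance condition defining $\Ind{H\cap{}^z\!L}{H}$, which boils down to the identity $\varphi(x\ell)=\ell^{-1}\varphi(x)$ together with the bookkeeping ${}^z(H^z\cap L)=H\cap{}^z\!L$;
\item[(ii)] the map is $A$-linear for the two $A$-actions in play, which is where the twist $e\cdot^z a=e(z^{-1}a)$ in the definition of $\Con_z$ (Lemma~\ref{lemma:conj}) enters, and matches the formula $(\varphi\cdot a)(x)=\varphi(x)(x^{-1}a)$ for the $A$-action on the induced module;
\item[(iii)] the map intertwines the two $H$-actions, using $(h\cdot\varphi)(x)=\varphi(h^{-1}x)$ on both sides;
\item[(iv)] it is a linear homeomorphism (clear, since on each side it is, as a plain Banach space, $\prod_{[H/(H\cap{}^z\!L)]}E$), and both sides carry finitely generated projective $A$-module structures by the results of Phillips cited earlier.
\end{itemize}
All of (i)--(iii) are direct manipulations with the defining formulas; there is nothing deep, only the need to be scrupulous about left/right conventions.

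Finally I would address naturality: once a set $[H\backslash G/L]$ of representatives is fixed, every step above is performed using only the chosen representatives $z$ and the structural formulas, never a choice depending on $E$; hence a morphism $E\to E'$ of $(L,A)$-modules induces compatible morphisms on all the pieces $\Gamma_z$ and on the summands $\Ind{H\cap{}^z\!L}{H}\Con_z\Res{L}{H^z\cap L}(E)$ (induction, restriction and conjugation being functorial in $E$ by Lemma~\ref{lemma:conj} and the cited results of Phillips), and the displayed isomorphism is natural. I expect the main obstacle to be purely notational: keeping the conjugation twists and the left-versus-right coset conventions consistent throughout, in particular making sure that the $A$-action twist in $\Con_z$ is precisely the one forced by the substitution $x=h^{-1}z$ in $(\varphi\cdot a)(x)=\varphi(x)(x^{-1}a)$, so that $(iii)$ comes out with the subgroup $H\cap{}^z\!L$ (and not some conjugate of it) acting on the nose. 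A clean way to organise the whole argument, and thereby sidestep much of this bookkeeping, is to first establish the analogue for the trivial algebra $A=\C$ — where it is just the classical Mackey formula for $\C G$-modules — and then observe that the general case follows by tensoring the universal $\C G$-module computation with $E$ over $\C$, compatibly with the $A$-actions; but I would still spell out the explicit isomorphism above, since it is what gets used in the sequel.
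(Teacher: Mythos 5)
Your plan follows the paper's proof almost exactly: decompose $G$ into double cosets $HzL$, split $\Res{G}{H}\Ind{L}{G}(E)$ accordingly into the subspaces of functions supported on a single double coset, and identify each piece with the corresponding twisted induced module by an explicit map. One concrete thing to fix: the formula $\psi(h):=\varphi(h^{-1}z)$ is not $H$-equivariant with the left-translation action $(h'\cdot\psi)(h)=\psi(h'^{-1}h)$ used here — you get $\varphi(h^{-1}h'z)$ on one side and $\varphi(h'^{-1}h^{-1}z)$ on the other. The map that works (and that the paper uses) is $\tilde\varphi(y):=\varphi(yz)$, with inverse $\hat\psi(hz\ell):=\ell^{-1}\psi(h)$. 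You flagged that the normalisation might need adjusting, so this is a bookkeeping slip rather than a conceptual gap, but as stated the map would fail step (iii).

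On the proposed shortcut of first proving the $\C G$-module Mackey formula and then ``tensoring with $E$'': this doesn't go through cleanly, because $\Ind{L}{G}(E)$ is not literally $\C[G/L]\otimes_\C E$ as a $(G,A)$-module — the $A$-action $(\varphi\cdot a)(x)=\varphi(x)(x^{-1}a)$ mixes the $G$-coordinate with the $A$-module structure in a way that the naive tensor product does not see, and it is exactly this mixing that produces the $\Con_z$ twist on the right-hand side. So you are right to say you would still write out the explicit isomorphism; in fact the explicit check of $A$-linearity is the only place where something beyond the set-theoretic Mackey formula happens, and it is precisely what determines the twist $e\cdot^z a = e(z^{-1}a)$. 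Naturality is handled the same way in the paper as you propose.
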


\begin{proof}
Every choice of the set $[H\backslash G/L]$ yields a basic decomposition
\[
{}_HG_L \cong \coprod_{z\in [H\backslash G/L]} HzL
\]
of $(H,L)$-bisets. There follows a decomposition of $(H,A)$-modules 
\begin{align*}
\Res{G}{H}\Ind{L}{G}(E) 
& = \left\{\varphi \colon \coprod_{z\in [H\backslash G/L]} HzL \to E 
        \mid \ell \varphi(x\ell)=\varphi(x) \; \forall x\in G, \ell\in L \right\} \\
& = \bigoplus_{z\in [H\backslash G/L]} 
   \underbrace{\left\{ \varphi \colon HzL \to E \mid  \ell \varphi(x\ell)=\varphi(x) \; \forall x\in HzL, \ell\in L  \right\}}_{
   \mbox{ \small $ =: V_z $}
   } \, .
\end{align*}
Of course the $H$-action on each summand~$V_z$ is still given by $(h\cdot \varphi)(x)=\varphi(h^{-1}x)$, and the $A$-action by $(\varphi\cdot a)(x)=\varphi (x)(x^{-1}a)$  (for all $x\in HzL, h\in H, a\in A$). 

For every~$z$, let $W_z:= \Ind{H\cap {}^z\! L}{H} \Con_z \Res{L}{H^z\cap L}(E)$ denote the corresponding summand of the right hand side of the Mackey formula. 
Here $\Con_z \Res{L}{H^z\cap L}(E) $ is $E$ equipped with the conjugated $H\cap  {}^z\! L$-action 
$h {}^z\! \cdot e = (z^{-1}h z) e$ 
(for $h\in H\cap {}^z\! L$ and $e\in E$), 
so that
\begin{align*}
W_z
& = \left\{\psi\colon H\to E \mid  (z^{-1}h z)\psi (yh) = \psi(y)  \;(y\in H , h\in H\cap{}^z\!L)  \right\} \\
& = \left\{\psi\colon H\to E \mid  \ell \psi (y z\ell z^{-1}) = \psi(y)  \;(y\in H , \ell \in H^z\cap L)  \right\} .
\end{align*}
On $W_z$ too the $H$-action is again $(h\cdot \psi)(y)=\psi(h^{-1}y)$, but now, because of conjugation, the $A$-action looks as follows:
\[
(\psi \cdot^z a)(y)= \psi(y)(z^{-1}y^{-1} a)  \quad \quad (\psi \in W_z, y\in H, a\in A).
\]
We claim that $V_z\cong  W_z$ via the function 
$ 
\varphi \mapsto \tilde\varphi
$
given by $\tilde\varphi(y):=\varphi(yz)$ for all $y\in H$.
The function is well-defined, because $yz\in HzL$ for all $y\in H$ and
\[
\ell \cdot \tilde\varphi(y z\ell z^{-1}) 
= \ell \cdot \varphi (yz\ell) 
= \ell \ell^{-1} \varphi(yz) 
= \tilde\varphi(y)
\]
for all $\ell\in L$. 
It is also evidently $H$-linear, and it is $A$-linear by the computation
\[
(\widetilde{\varphi \cdot a})(y) 
=  (\varphi \cdot a)(yz) 
= \varphi (yz ) (z^{-1}y^{-1}a)
= (\tilde\varphi \cdot^z a)(y)
\]
($\varphi\in V_z, a\in A, y\in H$).
Finally, we claim that the inverse map $\psi \mapsto \hat\psi$, $W_z\to V_z$, is given by the formula $\hat \psi (x):= \ell^{-1}  \psi (h)$ for each $x=hz\ell \in Hz L$.
The map~$\hat \psi$ is well-defined: if $x= hz\ell = h_1 z \ell_1 \in HzL$ and $\psi\in W_z$, then
\begin{align*}
\ell^{-1}\psi(h) 
& = \ell^{-1} \psi (h_1z\ell_1\ell^{-1} z^{-1})
= \ell^{-1} \psi (h_1z\ell_1 z^{-1}(z\ell^{-1} z^{-1}))
 & \\
&= \psi (h_1z\ell_1 z^{-1})    
 & (\psi\in W_z) \\
&= \ell_1^{-1}\psi( h_1 )
 & (\psi\in W_z)
\end{align*}
Moreover, the computation (with $x= hz\ell \in HzL, \ell' \in L$)
\[
\ell'  \hat \psi (x \ell') 
=  \ell' (\ell \ell')^{-1} \psi(h) 
= \ell^{-1} \psi(h)
 = \hat\psi(x)
\]
shows that indeed $\hat\psi \in V_z$ for all $\psi \in W_z$.
The verification that
$ (\hat\psi)^{\sim }   = \psi $
 and 
$(\tilde \varphi ){\,\hat {}} = \varphi$
 is equally immediate:
 \begin{align*}
 & (\tilde \varphi ){\,\hat {}} 
 = \ell^{-1} \tilde \varphi(h)
 =\ell^{-1} \varphi(hz)
 = \varphi(hz\ell)
 = \varphi(x)
 & 
  (\varphi\in V_z, x=hz\ell \in HzL), \\
 & (\hat\psi)^{\sim }(y)  
 = \hat \psi (yz)
 = \psi (y)
 &   (\psi\in W_z, y\in H).
 \end{align*}
  Hence we obtain the claimed  isomorphism $V_z\cong W_z$ of $(H,A)$-modules.
Therefore we have an isomorphism as claimed in the lemma, and it is evident from its construction that it  is natural in the $(L,A)$-module~$E$.
\end{proof}

\begin{prop}
\label{prop:mackey_Kth}
The modules $K^H_0(A)$ and the maps $\res{H}{L}$, $\ind{L}{H}$ and $\con_{g,H}$ described above define an $R^G$-Mackey module~$k^G(A)$, and the functorialities of all~$K^H_0$ assemble to yield a functor 
$k^G\colon \Cstaralg^G\to R^G\textrm- \Mack$, with $k^G(\unit)=R^G$.
\end{prop}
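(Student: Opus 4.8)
The plan is to verify, one family of relations at a time, that the data $(K^H_0(A), \res{H}{L}, \ind{L}{H}, \con_{g,H})$ satisfies the six axioms of an $R^G$-Mackey module listed in \S\ref{subsec:mackey}, and then to check the module compatibility conditions over $R^G$. Since by the remarks after the definition of the structure maps everything is natural in $A$, and since the functorial unitisation reduces all identities to the unital case, I would assume throughout that $A$ is unital and work with honest finitely generated projective $(H,A)$-modules rather than $K$-theory classes; each identity between homomorphisms on $K$-theory will follow from a natural isomorphism (or equality) between the corresponding functors on $(H,A)$-modules.

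First I would dispatch the ``easy'' relations. Functoriality of restriction, $\res{H}{L}\res{G}{H} = \res{G}{L}$, is immediate since restriction of modules along a chain of subgroup inclusions is strictly compatible. Transitivity of induction, $\ind{H}{G}\ind{L}{H}\cong\ind{L}{G}$, follows from the evident isomorphism $\Ind{H}{G}\Ind{L}{H}(E)\cong\Ind{L}{G}(E)$ of $(G,A)$-modules obtained by uncurrying functions $G\to(\Ind{L}{H}E)$, exactly as in the non-$C^*$ setting and as used already in the proof of Proposition~\ref{prop:cell_G}. Transitivity and triviality of conjugation, $\con_{f,{}^gH}\con_{g,H}=\con_{fg,H}$ and $\con_{h,H}=\id$ for $h\in H$, together with $\ind{H}{H}=\res{H}{H}=\id$, are already recorded in Lemma~\ref{lemma:conj} (and the $\Ind$/$\Res$ parts are trivial). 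The two mixed conjugation relations $\con_{g,H}\ind{L}{H}=\ind{{}^gL}{{}^gH}\con_{g,L}$ and $\con_{g,L}\res{H}{L}=\res{{}^gH}{{}^gL}\con_{g,H}$ I would check by writing out both sides on a module $E$ using the explicit formulas for $\Ind{L}{H}$ and $\Con_g$: in each case one exhibits a natural $A$- and group-linear bijection of the two resulting $(*,A)$-modules, the underlying function being a relabelling of the argument by an appropriate power of $g$; this is a short direct computation of the same flavour as the well-definedness checks in the proof of Lemma~\ref{lemma:mackey_formula_kG}.

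The substantive point is the \emph{Mackey formula} $\res{H}{L}\ind{K}{H} = \sum_{x\in[L\backslash G/K]}\ind{L}{L\cap {}^xK}\con_{x,L^x\cap K}\res{K}{L^x\cap K}$; but this is precisely the content of Lemma~\ref{lemma:mackey_formula_kG} (applied inside $H$ rather than $G$, which is the same statement with $G$ replaced by $H$), once one observes that passing to Grothendieck groups turns the natural direct-sum decomposition of $(H,A)$-modules there into the asserted sum of homomorphisms. So the ``hard part'' has in effect already been isolated and proved; what remains for this step is only to match the indexing of summands in Lemma~\ref{lemma:mackey_formula_kG} with the indexing in the Mackey formula of \S\ref{subsec:mackey} (a bookkeeping check with double cosets $HzL$ versus $LxK$). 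Finally I would verify the four $R^G$-module axioms: the restriction and conjugation compatibilities $\res{H}{L}(r\cdot m)=\res{H}{L}(r)\cdot\res{H}{L}(m)$ and $\con_{g,H}(r\cdot m)=\con_{g,H}(r)\cdot\con_{g,H}(m)$ hold because the $R(H)$-action on $K^H_0(A)$ is induced by $V\otimes_\C(\variable)$ and restriction/conjugation of modules commute with tensoring by a representation; and the two ``projection-formula'' identities $r\cdot\ind{L}{H}(m)=\ind{L}{H}(\res{H}{L}(r)\cdot m)$ and $\ind{L}{H}(r)\cdot m=\ind{L}{H}(r\cdot\res{H}{L}(m))$ follow from the natural isomorphism $V\otimes_\C\Ind{L}{H}(E)\cong\Ind{L}{H}(\Res{H}{L}V\otimes_\C E)$ of $(H,A)$-modules — the module-level Frobenius reciprocity — which is exactly the statement established for $C^*$-algebras in \cite{phillips:free}*{\S5.1} and whose shadow on $K_0$ is what we need. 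The last clause, $k^G(\unit)=R^G$, is Remark~\ref{remark:unit_identification}: the canonical isomorphisms $K^H_0(\C)\cong R(H)$ identify all three families of structure maps by inspection. Assembling these verifications gives the functor $k^G\colon\Cstaralg^G\to R^G\textrm-\Mack$, functoriality in $A$ being inherited from that of each $K^H_0\circ\Res{G}{H}$ together with the naturality clauses in Lemmas~\ref{lemma:conj} and~\ref{lemma:mackey_formula_kG}.
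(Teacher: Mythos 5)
Your proof follows essentially the same route as the paper's: both verify the Mackey-functor axioms relation by relation, with the easy relations coming from Lemma~\ref{lemma:conj} and \cite{phillips:free}*{Prop.~5.1.3}, the Mackey formula deduced from Lemma~\ref{lemma:mackey_formula_kG} (as you correctly note, applied with $H$ in place of $G$ — a point the paper elides), the Green-module relations from Phillips' Frobenius reciprocity, and the unit identification from Remark~\ref{remark:unit_identification}. The only small slip is that the fourth Green-module axiom $\ind{L}{H}(r)\cdot m=\ind{L}{H}(r\cdot\res{H}{L}(m))$ really uses the dual projection formula $\Ind{L}{H}(W)\otimes_\C F\cong\Ind{L}{H}(W\otimes_\C\Res{H}{L}(F))$ (with $W$ an $L$-representation and $F$ an $(H,A)$-module) rather than the form you wrote; both are standard and both are covered by the Phillips citation, so this is a citation imprecision rather than a gap.
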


\begin{proof}
First of all, let us fix a $G$-$\Cstar$-algebra~$A$, and 
let us verify that $k^G(A)$ is a Mackey functor. 
The Mackey formula holds by Lemma~\ref{lemma:mackey_formula_kG}.
The first and fifth relations (see $\S$\ref{subsec:mackey}) are either contained in Lemma~\ref{lemma:conj} or in \cite{phillips:free}*{Prop.\ 5.1.3}.
There remain the compatibilities of the conjugation maps with restrictions and inductions, which are straightforward and are left to the reader.
Thus $k^G(A)$ is a Mackey functor.
Next, we must verify that the collected actions of $R(H)$ on~$K^G_0(\Res{G}{H} A)$ turn $k^G(A)$ into an $R^G$-module.
The third and fourth relations for Mackey modules are proved in~\cite{phillips:free}*{Prop.\ 5.1.3}
(the third one under the guise of the $R(H)$-linearity of $\ind{L}{H}$).
The $R^G$-linearity of restriction and conjugation maps (first and second relations), are easier and are left to the reader.
Finally, the functoriality of $A\mapsto k^G(A)$ for equivariant $*$-homomorphisms follows immediately from that of each $H$-equivariant $K$-theory, and we have already seen (Remark~\ref{remark:unit_identification}) that $k^G(\unit)=R^G$.
\end{proof}

\begin{remark}
As usual we set $K^G_1(A):=K^G_0( A[1])$, so that we get a functor 
\[
K^G_* \colon \Cstaralg^G\to R(G) \textrm- \Mod_{\Z/2}
\quad , \quad
K^G_*(A):=\{K^G_\epsilon(A)\}_{\varepsilon\in \Z/2}
\] 
to graded modules and degree preserving morphisms.
We similarly obtain a functor 
\[
k^G_* \colon \Cstaralg^G\to R(G)\textrm- \Mack_{\Z/2}
\quad , \quad
k^G_*(A):=\{K^H_\epsilon(A)\}^{H\leqslant G}_{\varepsilon\in \Z/2}
\] 
into the category of $\Z/2$-graded Mackey modules over~$R^G$. Alternatively, the target category of $k^G_*$ may be understood as the category of Mackey modules over~$R^G$ based in the category of $\Z/2$-graded abelian groups.
\end{remark}

\subsection{The extension to the Kasparov category}
\label{subsec:ext}
Let us restrict our attention to the the subcategory of separable $G$-$\Cstar$-algebras, $\Cstarsep^G$. 
Next, we extend our functor~$k^G$ to the $G$-equivariant Kasparov category and study the properties of the extension. 

\begin{lemma}
\label{lemma:mackey_KKth}
The functor $k^G$ has a unique lifting,  that we also denote by~$k^G$, to the Kasparov category~$\KK^G$ along the canonical functor $\Cstarsep^G\to \KK^G$.
\end{lemma}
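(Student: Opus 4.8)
The plan is to invoke the universal property of the Kasparov category $\Cstarsep^G\to \KK^G$: a functor out of $\Cstarsep^G$ factors through $\KK^G$ precisely when it is \emph{split-exact}, \emph{$\Cstar$-stable} (\ie~homotopy invariant and stable under tensoring with compact operators), and such a factorization, when it exists, is automatically unique. So the bulk of the argument is to check that the functor $k^G\colon \Cstarsep^G\to R^G\textrm-\Mack$ constructed in Proposition~\ref{prop:mackey_Kth} enjoys these three properties. Since a morphism in $R^G\textrm-\Mack$ is an isomorphism (resp.~the zero map, resp.~a kernel/cokernel) if and only if each of its components at $H\leqslant G$ is, and since each component of $k^G$ is by definition $K^H_0\circ\Res{G}{H}$, it suffices to verify that each of these composite functors has the required invariance. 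Restriction $\Res{G}{H}\colon \Cstarsep^G\to\Cstarsep^H$ is an exact $\Cstar$-functor in the obvious sense (it sends split-exact sequences to split-exact sequences, homotopies to homotopies, and commutes with tensoring by compacts), so the whole matter reduces to the corresponding classical facts about equivariant topological K-theory $K^H_0$.

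First I would recall (citing \cite{phillips:free}*{\S2} and standard references) that equivariant K-theory $K^H_0$ on $\Cstarsep^H$ is homotopy invariant, is stable under tensoring with the algebra $\mathcal K$ of compact operators carrying the trivial $H$-action (or more generally any $H$-$\mathcal K$ coming from a representation), and is split-exact: a split extension $J\rightarrowtail A\twoheadrightarrow B$ of $H$-$\Cstar$-algebras induces a split short exact sequence $0\to K^H_0(J)\to K^H_0(A)\to K^H_0(B)\to 0$. Each of these is either proved in \emph{loc.\,cit.}~or follows from the identification $K^H_0(A)\cong \KK^H_0(\C,A)$ together with the known properties of $\KK^H$; one can cite \cite{meyer_cat} or \cite{meyer_nest_bc} for the existence and universal property of $\KK^H$. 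Then I would observe that since $\Res{G}{H}$ preserves split extensions, homotopies, and $\mathcal K$-stabilizations on the nose, the composite $K^H_0\circ\Res{G}{H}$ inherits all three properties; hence so does $k^G$, componentwise and therefore as a functor into $R^G\textrm-\Mack$.

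Having established that $k^G$ is split-exact, $\Cstar$-stable and homotopy invariant, the universal property of the functor $\Cstarsep^G\to\KK^G$ (Higson's theorem, see \cite{meyer_cat}) yields a unique functor $\KK^G\to R^G\textrm-\Mack$, still written $k^G$, making the evident triangle commute. I would remark in passing that on $\C=\unit$ this lifting recovers $k^G(\unit^G)=R^G$ as before (by Remark~\ref{remark:unit_identification}), and that the $R^G$-module structure is automatically transported, since the $R(H)$-actions on $K^H_0(\Res{G}{H}A)$ are natural and the universal property produces a natural — hence $R^G$-linear — functor.

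The only genuine subtlety — the ``hard part'', such as it is — is bookkeeping rather than conceptual: one must make sure the three invariance properties are formulated in exactly the form demanded by the universal property of $\KK^G$ (in particular that split-exactness plus $\Cstar$-stability is what is required, and that these force homotopy invariance), and that the $R^G$-module and graded structure ride along for free rather than needing a separate verification. Once the right black box is quoted, each verification is a one-line reduction through $\Res{G}{H}$ to a statement about $K^H_0$ that is already in the literature, and uniqueness is part of the universal property, so there is nothing left to prove by hand.
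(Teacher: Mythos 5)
Your proposal is correct and takes essentially the same approach as the paper: invoke the universal property of $\Cstarsep^G\to\KK^G$ (the paper cites \cite{meyer_homom} rather than \cite{meyer_cat}, but it is the same theorem), and reduce the verification of homotopy invariance, $\Cstar$-stability, and split-exactness componentwise to the known properties of each $K^H_0\circ\Res{G}{H}$. The paper's proof is exactly this, stated more tersely; your extra discussion of why $\Res{G}{H}$ preserves the relevant structure and why the $R^G$-module structure rides along is a correct elaboration of what the paper leaves implicit.
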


\begin{proof}
By the universal property of the canonical functor $\Cstarsep^G\to \KK^G$, as proved in~\cite{meyer_homom}, the existence and uniqueness of such a lifting is equivalent to the functor $k^G$ being homotopy invariant, $\Cstar$-stable and split exact (in the $G$-equivariant sense).
This follows immediately from the basic fact that each $K$-theory functor $K^H_0\circ \Res{G}{H}$ does enjoy the three properties.
\end{proof}

Note that, for all $H\leqslant G$, the Green functor $R^H$ is just the restriction of $R^G$ at~$H$, in the sense of~$\S$\ref{subsec:ind_res_mac}: $R^H=\Res{G}{H}(R^G)$. 
Therefore, as explained there, the evident restriction functor $\Res{G}{H}\colon R^G\textrm- \Mack\to R^H \textrm- \Mack$ has a left-and-right adjoint~$\Ind{H}{G}$.

\begin{lemma}
\label{lemma:Ind_k}
For all $H\leqslant G$, the diagrams 
\[
\xymatrix{
\KK^G_{\phantom{G}} \ar[r]^-{k^G} \ar[d]_{\Res{G}{H}} & R^G\textrm- \Mack \ar[d]^{\Res{G}{H}} \\
\KK^H_{\phantom{H}} \ar[r]^-{k^H} & R^H\textrm- \Mack
}
\quad\quad
\xymatrix{
\KK^G_{\phantom{G}} \ar[r]^-{k^G} & R^G\textrm- \Mack \\
\KK^H_{\phantom{H}} \ar[r]^-{k^H} \ar[u]^{\Ind{H}{G}} & R^H\textrm- \Mack \ar[u]_{\Ind{H}{G}}
}
\]
commute up to isomorphism of functors.
\end{lemma}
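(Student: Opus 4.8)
The plan is to verify the two commuting squares separately, each by checking commutativity on the generating data and then extending. For the left square, I would first establish the identity at the level of $G$-$C^*$-algebras and $(G,A)$-modules. Given $A \in \KK^G$, both composites $\Res{G}{H} k^G(A)$ and $k^H(\Res{G}{H}A)$ are $R^H$-Mackey modules whose value at a subgroup $L \leqslant H$ is $K^L_0(\Res{G}{L}A)$; this is immediate from the definitions in \S\ref{subsec:mackey} and \S\ref{subsec:ind_res_mac}, since restriction of a Mackey functor just forgets the subgroups not contained in $H$. The structure maps $\res{}{}$, $\ind{}{}$ and $\con{}{}$ of both modules, for $L' \leqslant L \leqslant H$, are literally the same maps built in \S\ref{subsec:equiv_Kth}--\S\ref{sec:K_mack} from $(G,A)$-modules, so the identity natural transformation is an isomorphism (in fact an equality) of $R^H$-modules, natural in $A$. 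Since this holds on the nose for $A \in \Cstarsep^G$ and compatibly with $*$-homomorphisms, and since both $\Res{G}{H} \circ k^G$ and $k^H \circ \Res{G}{H}$ are homotopy invariant, $C^*$-stable and split exact (each factor being so), the universal property of $\Cstarsep^G \to \KK^G$ used in Lemma~\ref{lemma:mackey_KKth} forces the lifted natural isomorphism on all of $\KK^G$.

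For the right square I would argue by adjunction rather than by a direct description of $\Ind{H}{G}$. Both $\Ind{H}{G}\colon \KK^H \to \KK^G$ (available since $G/H$ is finite, hence induction is both left and right adjoint to restriction) and $\Ind{H}{G}\colon R^H\textrm-\Mack \to R^G\textrm-\Mack$ (the left-and-right adjoint to restriction from \S\ref{subsec:ind_res_mac}) are \emph{left} adjoints of the corresponding restriction functors. Therefore the asserted isomorphism $k^G \circ \Ind{H}{G} \cong \Ind{H}{G} \circ k^H$ is, by uniqueness of adjoints and the Beck--Chevalley-type formalism, equivalent to the commutativity up to isomorphism of the \emph{mate} square, i.e. to the natural isomorphism $k^H \circ \Res{G}{H} \cong \Res{G}{H} \circ k^G$ already proved in the first part. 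Concretely, I would take the left square, pass to the associated square of right adjoints (which, since restriction on both sides also has $\Ind{H}{G}$ as a right adjoint, gives back an $\Ind{H}{G}$-square), and observe that $k^G \circ \Ind{H}{G}$ and $\Ind{H}{G} \circ k^H$ are both left adjoint to the same composite $\Res{G}{H} \circ (\text{suitable functor})$; a standard diagram chase with the unit and counit then produces the comparison isomorphism and its naturality for free.

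As a sanity check I would confirm the formula on the generators: $\Ind{H}{G}(\unit^H) = C(G/H)$ in $\KK^G$, and $\Ind{H}{G}(R^H) \cong R^G_{G/H}$ as $R^G$-modules (the representable at $G/H$, by \eqref{ind_subgps} and the identification $k^G(\C)=R^G$ of Remark~\ref{remark:unit_identification}); then $k^G(\Ind{H}{G}\unit^H) = k^G(C(G/H))$, whose value at $L \leqslant G$ is $K^L_0(\Res{G}{L} C(G/H))$, and by the orbit decomposition $\Res{G}{L}C(G/H) \cong \bigoplus_{x \in [L\backslash G/H]} C(L/L\cap{}^xH)$ from the proof of Proposition~\ref{prop:cell_G} this is $\bigoplus_{x} R(L \cap {}^xH) = \Ind{H}{G}(R^H)[L]$, matching \eqref{ind_subgps}. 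I expect the main obstacle to be purely bookkeeping: making the mate/Beck--Chevalley argument precise one must keep careful track of \emph{which} adjunction (the $G/H$-finite ``restriction is both adjoints'' phenomenon) is being used on the topological side versus the Mackey side, and check that the unit/counit of the $\KK$-level adjunction is sent by $k^G$ to the unit/counit of the module-level adjunction — this compatibility is what makes the two mate squares genuinely correspond, and it ultimately reduces again to the first part together with the explicit $(G,A)$-module formulas for $\Ind{L}{H}$ in \S\ref{sec:K_mack}.
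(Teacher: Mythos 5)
The left square is handled correctly and matches the paper, which simply notes that this square ``even commutes strictly'': both composites produce literally the same data $\{K^L_0(\Res{G}{L}A) : L\leqslant H\}$ with the same structure maps. Your extra detour through the universal property of $\Cstarsep^G\to\KK^G$ is unnecessary but harmless.

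For the induction square, however, the proposal has a genuine gap. You claim that ``$k^G\circ\Ind{H}{G}$ and $\Ind{H}{G}\circ k^H$ are both left adjoint to the same composite $\Res{G}{H}\circ(\textrm{suitable functor})$'' so that uniqueness of adjoints finishes the job. This does not make sense as stated: the functors $k^G$ and $k^H$ are not part of any adjoint pair, so neither $k^G\circ\Ind{H}{G}$ nor $\Ind{H}{G}\circ k^H$ is exhibited as an adjoint of anything. What the mate (Beck--Chevalley) formalism actually delivers, starting from $\alpha\colon\Res{G}{H}\circ k^G\cong k^H\circ\Res{G}{H}$ and the adjunctions $\Ind{H}{G}\dashv\Res{G}{H}$ on both sides, is a canonical natural \emph{transformation}
\[
\beta\colon\Ind{H}{G}\circ k^H \;\Longrightarrow\; k^G\circ\Ind{H}{G}\,,
\]
built from units and counits; it is emphatically \emph{not} automatic that $\beta$ is invertible --- that is precisely what the Beck--Chevalley condition asserts and what must be checked. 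Verifying it here means unwinding $\beta$ pointwise, and one quickly lands on the same Mackey-formula decomposition of $\Res{G}{L}\Ind{H}{G}(A)$ that the paper uses directly (together with \eqref{ind_subgps} and the conjugation identifications of Remark~\ref{rem:conj_alg}). Your sanity check on $\unit^H$ does not repair this, because the objects $C(H/K)$, $K\leqslant H$, only generate the subcategory $\Cell^H$, not all of $\KK^H$; an ``isomorphism on a generating set'' argument therefore cannot cover an arbitrary $A\in\KK^H$, whereas the lemma asserts commutativity on all of $\KK^H$. In short, the mate construction is a legitimate way to \emph{produce} the comparison map, but you still need the Mackey-formula computation to show it is an isomorphism --- and that is exactly the content of the paper's proof.
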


\begin{proof}
The claim involving the restriction functors is evident from the definitions; in this case, the square even commutes strictly. 
Now we prove the claim for induction.

Let $A\in \KK^H$ and $L\leqslant G$. In the case of the rank-one free module, the Mackey formula of Lemma~\ref{lemma:mackey_formula_kG} can be easily rewritten as the following isomorphism of $L$-$\Cstar$-algebras:
\[
\Res{G}{L}\Ind{H}{G}(A) 
\cong
 \bigoplus_{x\in [L\backslash G/H]} \Ind{L\cap {}^xH}{L} \Res{{}^xH}{L\cap {}^xH}(\,{}^x\! A) \,.
\]
Once we have fixed the set of representatives for $L\backslash G/H$, the isomorphism becomes natural in~$A$.
Therefore we get a natural isomorphism
\begin{align*}
(k^G \circ \Ind{H}{G}(A))[L]
& = K^L_0\left( \Res{G}{L} \Ind{H}{G}(A) \right)  \\
& \cong \bigoplus_{x\in [L\backslash G/H]} K^L_0\left(  \Ind{L\cap {}^xH}{L}  \Res{{}^xH}{L\cap {}^x H}( {}^x\! A)  \right) \\
& \cong  \bigoplus_{x\in [L\backslash G/H]} K^{L\cap{}^xH}_0\left( \Res{{}^xH}{L\cap {}^x H}(  {}^x\! A)  \right) \\
& \cong   \bigoplus_{x\in [L\backslash G/ H]} k^{\,{}^x H}( {}^x A)[L\cap {}^xH]  \\
& \cong   \bigoplus_{x\in [L\backslash G/ H]} k^H(A)[H\cap L^x]  \\
& = (\Ind{H}{G}\circ k^H(A))[L] \, .
\end{align*}
In the third line we have used the  $(\Ind{}{},\Res{}{})$-adjunction for  Kasparov theory, and in the fifth we have used the $H,{}^x\!H$-equivariant isomorphism $A\cong {}^xA$ of Remark~\ref{rem:conj_alg} and the isomorphism $K^H_0(\Res{H}{L^x\cap H}A)\cong K^{\,{}^x\! H}_0(\Res{\,{}^x\!H}{L\cap \,{}^x\!H} {}^xA)$ it induces; the last line is~\eqref{ind_subgps} with $a=x^{-1}$.
This proves the claim.
\end{proof}

The next theorem is the main result of this article.

\begin{thm}
\label{thm:fully_faithful}
The restriction of $k^G:\KK^G \to R^G\textrm-\Mack$ to the full subcategory $\{C(G/H)\colon H\leqslant G\}$ of $\KK^G$  is a fully faithful functor.
\end{thm}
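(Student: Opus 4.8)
The plan is to show that, for every pair of subgroups $H,L\leqslant G$, the map
\[
k^G\colon\;\KK^G\big(C(G/H),C(G/L)\big)\longrightarrow R^G\textrm-\Mack\big(k^G(C(G/H)),k^G(C(G/L))\big)
\]
becomes an identity map once both sides are suitably identified with the group $K^H_0\big(\Res{G}{H}C(G/L)\big)=k^G(C(G/L))[H]$. The key preliminary is the computation of $k^G$ on these objects: since $C(G/H)=\Ind{H}{G}(\unit^H)$, Lemma~\ref{lemma:Ind_k} together with $k^H(\unit^H)=R^H=\Res{G}{H}R^G$ (Proposition~\ref{prop:mackey_Kth}) yields a natural isomorphism $k^G(C(G/H))\cong\Ind{H}{G}(R^H)$, which in turn is the representable $R^G$-module $R^G_{G/H}$ (both corepresent $M\mapsto M[H]$ on $R^G\textrm-\Mack$).

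Now I would identify both Hom-groups. On the Kasparov side, $G/H$ is discrete (as $G$ is finite), so $\Ind{H}{G}$ is left adjoint to $\Res{G}{H}$, giving
\[
\KK^G\big(C(G/H),B\big)=\KK^G\big(\Ind{H}{G}\unit^H,B\big)\cong\KK^H\big(\unit^H,\Res{G}{H}B\big)=K^H_0\big(\Res{G}{H}B\big)=k^G(B)[H]
\]
naturally in $B\in\KK^G$; call this $\lambda_B$, and observe that $\lambda_B(\varphi)=(k^G\varphi)[H](\xi_H)$, where $\xi_H:=\lambda_{C(G/H)}(\id_{C(G/H)})$ is the unit $\eta^{\KK}\colon\unit^H\to\Res{G}{H}\Ind{H}{G}\unit^H$ of this adjunction, viewed inside $\KK^H(\unit^H,\Res{G}{H}C(G/H))=K^H_0(\Res{G}{H}C(G/H))$. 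On the Mackey side, the adjunction $\Ind{H}{G}\dashv\Res{G}{H}$ for $R^G$-modules (\S\ref{subsec:ind_res_mac}), the identity $\Res{G}{H}k^G=k^H\Res{G}{H}$ of Lemma~\ref{lemma:Ind_k}, and the Yoneda isomorphism~\eqref{bouc_yoneda} over the group $H$ (with $R^H=R^H_{H/H}$) combine to give
\[
R^G\textrm-\Mack\big(\Ind{H}{G}R^H,M\big)\cong R^H\textrm-\Mack\big(R^H,\Res{G}{H}M\big)\cong(\Res{G}{H}M)[H]=M[H]
\]
naturally in $M$; call this $\mu_M$, with $\mu_M(\psi)=\psi[H](\zeta_H)$ for $\zeta_H:=\mu_{k^G(C(G/H))}(\id)$. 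As $\lambda$ and $\mu$ are isomorphisms, it suffices to prove $\mu_{k^G B}\circ k^G=\lambda_B$; then $k^G=\mu^{-1}\circ\lambda$ is bijective.

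Since $\KK^G(C(G/H),\variable)$ is corepresented by $C(G/H)$, the Yoneda lemma reduces the identity $\mu_{k^G B}\circ k^G=\lambda_B$ (for all $B$) to the single equation $\xi_H=\zeta_H$ in $K^H_0(\Res{G}{H}C(G/H))$. This is the one non-formal step and the place I expect the real work. The point is to recognise both sides as the \emph{diagonal class}: using the orbit decomposition $\Res{G}{H}C(G/H)\cong\bigoplus_{x\in[H\backslash G/H]}C\big(H/(H\cap{}^xH)\big)$, hence $K^H_0(\Res{G}{H}C(G/H))\cong\bigoplus_{x}R(H\cap{}^xH)$, the unit $\eta^{\KK}$ is the inclusion of the functions supported at the $H$-fixed point $eH$, so $\xi_H$ is $1\in R(H)$ in the summand indexed by the trivial double coset; on the other side, $\zeta_H$ is the universal element of $R^G_{G/H}$ for the functor $M\mapsto M[H]=M(G/H)$, namely $\id_{G/H}\in\mathcal B_{R^G}(G/H,G/H)=R^G(G/H\times G/H)$, which under the same double-coset decomposition is again $1\in R(H)$ on the diagonal summand. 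One verifies that the isomorphism $k^G(C(G/H))\cong\Ind{H}{G}R^H$ of Lemma~\ref{lemma:Ind_k} matches these summands (equivalently, intertwines the two adjunction units), whence $\xi_H=\zeta_H$, the square commutes, and $k^G$ is bijective on all the relevant Hom-groups.
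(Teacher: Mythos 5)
Your argument matches the paper's proof: both rewrite $C(G/H)=\Ind{H}{G}\unit^H$, invoke Lemma~\ref{lemma:Ind_k} to compute $k^G(C(G/H))\cong\Ind{H}{G}(R^H)$, and then run the induction--restriction adjunctions on the $\KK$ and Mackey-module sides in parallel, reducing the map to the canonical identification $R(H)\textrm-\Mod\bigl(R(H),K^H_0(-)\bigr)\cong K^H_0(-)$. Your reduction to the single equation $\xi_H=\zeta_H$ between adjunction units is exactly the content of the commutativity of the middle square in the paper's ladder diagram, so the proposal is correct and follows the same route, if anything spelling out the one non-formal verification more explicitly.
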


\begin{proof}
Identifying $k^G\circ \Ind{}{} = \Ind{}{} \circ k^H$ and $k^H\circ \Res{}{}= \Res{}{} \circ k^G$ as in Lemma~\ref{lemma:Ind_k}, for all $H,L\leqslant G$ we have the following commutative diagram:
\begin{align*}
\xymatrix{
\KK^G(C(G/H), C(G/L))  \ar[r]^-{k^G} \ar@{=}[d] &
 R^G\textrm-\Mack (k^G C(G/H), k^G C(G/L)) \ar@{=}[d] \\
\KK^G(\Ind{H}{G} \unit , \Ind{L}{G} \unit) \ar[r]^-{k^G} \ar[d]_{\cong} & 
 R^G\textrm-\Mack (\Ind{H}{G} k^H(\unit), \Ind{L}{G} k^L(\unit)) \ar[d]^{\cong} \\
\KK^H(\unit , \Res{G}{H}\Ind{L}{G} \unit) \ar[r]^-{k^G} \ar[d]_{\cong} & 
 R^H\textrm-\Mack (R^H, k^H(\Res{G}{H}\Ind{L}{G} \unit)) \ar[d]_{\can}^{\cong} \\
 K^H_0(\Res{G}{H}\Ind{L}{G} \unit) \ar[r]^-{\can}_-{\cong} &
  R(H)\textrm- \Mod (R(H), K^H_0(\Res{G}{H}\Ind{L}{G} \unit))
}
\end{align*}
Therefore the upper map labeled $k^G$ is bijective.
\end{proof}

\subsection{The Burnside-Bouc category as equivariant KK-theory}
\label{subsec:BBKK}

To complete the picture, we can now describe the  Burnside-Bouc category associated with the representation ring~$R^G$ in terms of $G$-equivariant Kasparov theory. The relation is a very simple and satisfying one.

\begin{defi}
In analogy with permutation modules, we call a $G$-$\Cstar$-algebra of the form $C_0(X)$, for some $G$-set~$X$, a \emph{permutation algebra}. 
Let 
\[
\mathsf{Perm}^G  \quad \textrm{ resp. } \quad \mathsf{perm}^G
\] be the full subcategory of~$\KK^G$ of separable permutation algebras, respectively of finite dimensional permutation algebras. Note that they are precisely those of the form $A\cong \bigoplus_{i\in I} C(G/H_i)$ for some countable, respectively finite, index set~$I$. Note also that, by virtue of the natural isomorphisms $C(X)\oplus C(Y)\cong C(X\sqcup Y)$ and $C(X)\otimes C(Y)\cong C(X\times Y)$, both $\mathsf{Perm}^G$ and $\mathsf{perm}^G$ are additive tensor subcategories of~$\KK^G$.
\end{defi}

\begin{thm}
\label{thm:burnside_bouc_R}
For every finite group~$G$, the functor $k^G: \KK^G \to R^G\textrm-\Mac$ of~\S\ref{subsec:ext} restricts to a tensor equivalence of 
$\mathsf{perm}^G$ with the full subcategory of representable $R^G$-modules, \emph{i.e.}, with the Burnside-Bouc category~$\mathcal B_R$.
\end{thm}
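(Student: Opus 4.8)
The plan is to bootstrap from Theorem~\ref{thm:fully_faithful}, which already gives full faithfulness of $k^G$ on the full subcategory $\{C(G/H) : H\leqslant G\}$, and from the identification of $R^G\textrm-\Mack$ with the presheaf category $\Ab^{(\mathcal B_R)^\op}$ together with the description of $\mathcal B_R$ as having the finite $G$-sets as objects and $R_{G/H}$ as the representable modules. The upshot we need is: (i) $k^G$ is fully faithful on $\mathsf{perm}^G$, (ii) its essential image is exactly the representable $R^G$-modules, and (iii) it is symmetric monoidal. Since every object of $\mathsf{perm}^G$ is a finite direct sum $\bigoplus_i C(G/H_i)$, and $k^G$ is an additive functor into an additive category, full faithfulness on the orbit algebras $C(G/H)$ automatically upgrades to full faithfulness on all finite direct sums of them; this is purely formal (an additive functor that is fully faithful on a set of objects is fully faithful on their finite biproducts, because $\Hom$ out of and into a biproduct splits as a product). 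So step one is just to invoke Theorem~\ref{thm:fully_faithful} and this additive-closure argument.

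For the essential image, first note $k^G(C(G/H)) \cong R^G_{G/H}$: indeed by Lemma~\ref{lemma:Ind_k} we have $k^G(C(G/H)) = k^G(\Ind{H}{G}\unit^H) \cong \Ind{H}{G} k^H(\unit^H) = \Ind{H}{G} R^H = \Ind{H}{G}\Res{G}{H}(R^G)$, and by formula~\eqref{ind_subgps} this is precisely the representable $R^G$-module $R^G_{G/H}$ (alternatively, $R^G_{G/H}(Y) = R^G(Y\times G/H) = R^G(\Res{G}{H} Y) = (\Ind{H}{G}\Res{G}{H} R^G)(Y)$). Since $k^G$ is additive and the representable modules $R^G_X$ for $X\in G\textrm-\set$ are exactly the finite direct sums of the $R^G_{G/H}$ (by decomposing $X$ into orbits, using $R^G_{X\sqcup X'}\cong R^G_X\oplus R^G_{X'}$), the essential image of $\mathsf{perm}^G$ under $k^G$ is exactly the full subcategory of representables, which by construction is $\mathcal B_R$. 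Combined with step one, $k^G$ restricts to an equivalence $\mathsf{perm}^G \xrightarrow{\ \sim\ } \mathcal B_R$.

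It remains to check that this equivalence is a tensor equivalence. Both $\mathsf{perm}^G$ and $\mathcal B_R$ are symmetric monoidal: on $\mathsf{perm}^G$ the product is $C(X)\otimes C(Y)\cong C(X\times Y)$ with unit $C(G/G)=\unit^G$; on $\mathcal B_R$ it is $X\times Y$ on objects with unit $G/G$, as recalled in Remark~\ref{rem:alt_def_tensor}. The functor $k^G$ is (strong) symmetric monoidal on all of $\KK^G$ in the sense that the natural isomorphisms $C(X)\otimes C(Y)\cong C(X\times Y)$ are sent to the natural isomorphisms $R^G_X \boxpr_R R^G_Y \cong R^G_{X\times Y}$; concretely the latter is the content of \cite{lewis:mimeo}*{Prop.~2.5} cited in Remark~\ref{rem:alt_def_tensor}, and one checks compatibility with the associativity, unit and symmetry constraints by reducing everything, via additivity and orbit decomposition, to the orbit algebras $C(G/H)$ where the structure isomorphisms are the canonical ones coming from $\Ind{}{}$ and Frobenius~\eqref{frob:Cstar}; this is the same bookkeeping already used in the proof of Theorem~\ref{thm:fully_faithful} and in Proposition~\ref{prop:higher_frob}. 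Unit-preservation is immediate since $k^G(\unit^G) = R^G = R^G_{G/G}$ by Proposition~\ref{prop:mackey_Kth}.

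\textbf{Main obstacle.} The genuinely non-formal input is already packaged: it is Theorem~\ref{thm:fully_faithful} (full faithfulness on orbits), which rests on the Green--Julg adjunction and the $\Ind\textrm-\Res$ adjunctions in $\KK^G$, and the identification $k^G(C(G/H))\cong R^G_{G/H}$ via Lemma~\ref{lemma:Ind_k}. Given those, the only thing requiring care in this proof is the coherence check for the monoidal structure — verifying that the comparison isomorphisms $k^G(A)\boxpr_R k^G(B) \cong k^G(A\otimes B)$ are natural and satisfy the hexagon/pentagon/unit axioms. This is where one must be slightly attentive rather than wave hands, but it is routine diagram-chasing: everything is additive and every object decomposes into orbits, so it suffices to verify the axioms on the generating orbit algebras, where all structure maps are the canonical ones and commute by construction. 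I do not anticipate a real difficulty, only some unavoidable bookkeeping.
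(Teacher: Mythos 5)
Your proof is correct and lands on the same essential ingredients as the paper's — Theorem~\ref{thm:fully_faithful}, the identification $k^G(C(G/H)) \cong R^G_{G/H}$, and the compatibility of $\boxpr_{R^G}$ with $\times$ on representables — but the route through the middle step is packaged differently. The paper establishes $\beta_{G/H}\colon k^G(C(G/H)) \cong R^G_{G/H}$ by a hands-on computation of $k^G(C(G/H))(G/L)$ using the orbit decomposition of $\Res{G}{L}C(G/H)$; you instead factor this through Lemma~\ref{lemma:Ind_k}, writing $k^G(C(G/H)) = k^G(\Ind{H}{G}\unit^H) \cong \Ind{H}{G}R^H$ and then matching $\Ind{H}{G}\Res{G}{H}(R^G)$ against $R^G_{G/H}$ via the $G$-set formulas of \S\ref{subsec:ind_res_mac}. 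This is cleaner because it re-uses an already-proved lemma instead of redoing the Mackey-formula computation inline; the price is a small notational slip in your chain ($R^G(\Res{G}{H}Y)$ should read $R^H(\Res{G}{H}Y)$, or equivalently $R^G(\Ind{H}{G}\Res{G}{H}Y)$, using the $G$-set isomorphism $Y\times G/H \cong \Ind{H}{G}\Res{G}{H}Y$). You also make explicit a step the paper leaves implicit, namely upgrading full faithfulness from the orbit algebras $\{C(G/H)\}$ to all of $\mathsf{perm}^G$ via additivity and finite biproducts; this is a genuine (if routine) gap-fill. On the monoidal coherence, both you and the paper stop at the same level of detail: the paper exhibits $\phi_{X,Y}$ and checks the symmetry square, while you reduce to orbit algebras and invoke $R^G_X \boxpr_{R^G} R^G_Y \cong R^G_{X\times Y}$ from Remark~\ref{rem:alt_def_tensor}. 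Neither treatment verifies the pentagon and unit coherence in full, and your honest flag of this as the only place requiring care is apt. Overall: same strategy, slightly more economical bookkeeping, with one minor notational inaccuracy that does not affect correctness.
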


\begin{proof}
Identifying $R^G\textrm-\Mack = \Ab^{(\mathcal B_{R^G})^\op}$ as in~\S\ref{subsec:functorial}, we obtain the following diagram of functors, which we claim is commutative (up to isomorphism).
\begin{equation*}
\xymatrix{
(G\textrm-\set)^\op
  \ar[rr]^-{\can}
    \ar[dd]_{C} 
     \ar[dr]^-{C} &&
 \mathcal B  \ar[dd]|{X\mapsto R^G_X} 
  \ar[r]^-{\can} &
   \mathcal B_{R^G} \ar[ddl]^-{\textrm{Yoneda}}
   \\
 & \Cstarsep^G
  \ar[dl]_{\can}
   \ar[dr]^-{k^G} 
    \ar@{}[ur]|\beta_{\cong}
     && \\
 \KK^G \ar[rr]^-{k^G} && R^G \textrm- \Mack &
}
\end{equation*}
Indeed, the left, bottom and right triangles (strictly) commute by definition.
We must show that there is a natural isomorphism $\beta_X\colon k^G \circ C(X) \cong R^G_{X}$, making the central triangle commute. 
For $X=G/H$ and $Y=G/L \in G\textrm-\set$, we obtain the following isomorphisms~$\beta_{G/H}(G/L)$:
\begin{align*}
 k^G (C(G/H))(G/L) 
& =  K^L_0(\Res{G}{L} C(G/H)) \\
& =  K^L_0(C( \Res{G}{L}  G/H)) \\
& \cong K^L_0 C\left( \coprod_{x\in [L\backslash G/H]} L/(L \cap {}^x H) \right)  \\
& \cong \bigoplus_{x\in [L\backslash G/H]} K^L_0 C(L/(L \cap {}^xH)) \\ 
& \cong \bigoplus_{x\in [L\backslash G/H]} R(L \cap {}^x H) \\
& \cong R^G \left(  \bigoplus_{x\in [L\backslash G/H]} G/(L \cap {}^x H) \right) \\
& =   R^G ( G/L \times G/H) \\
& = R^G_{G/H} (G/L)
\end{align*}
We leave to the reader the verification that, by letting $L\leqslant G$ vary, these define an isomorphism $\beta_{G/H}\colon k^GC(G/H)\cong R^G_{G/H}$, and that the latter can be extended to a natural isomorphism $\beta_X$ as required.

The statement of the theorem follows now from the fact that the bottom horizontal~$k^G$ is fully faithful on the image of~$C$, by Theorem~\ref{thm:fully_faithful}; the ``tensor'' part follows from the identification $k^G(\C)= R^G$ and from the natural isomorphism
\begin{align*}
\phi_{X,Y} \quad \colon \quad
k^G(C(X)) \boxpr_{R^G} k^G (C(Y))
&\cong R^G_X \boxpr_{R^G} R^G_Y \\
&\cong  R^G_{X\times Y} \\
&\cong k^G(C(X\times Y)) \\
&\cong k^G(C(X)\otimes C(Y)) 
\end{align*}
for all $X,Y\in G\textrm-\set$, obtained by combining~$\beta$ with the symmetric monoidal structures of the functor~$C$ and of the Yoneda embedding $X\mapsto R_X$.
Clearly the square
\[
\xymatrix{
k^GC(X) \boxpr_{R_G} k^GC(Y)
 \ar[d]_{\phi_{X,Y}} \ar[rr]^-{k^G(\textrm{switch})} &&  k^GC(Y) \boxpr_{R_G} k^GC(X) \ar[d]^{\phi_{Y,X}}
 \\
k^G(C(X)\otimes C(Y)) \ar[rr]^{\textrm{switch}} && 
  k^G(C(Y)\otimes C(X))
}
\]
is commutative, showing that~$\phi$ turns~$k^G$ into a symmetric monoidal functor on the image of~$C$.
\end{proof}

\begin{cor}
\label{cor:ident_mackey_modules}
The category of additive functors $(\mathsf{perm}^G)^\op\to \Ab$ is equivalent to the category of Mackey modules over the representation Green functor~$R^G$.
If we equip functor the category with the Day convolution product, we have a symmetric monoidal equivalence.
The same holds for the category of coproduct-preserving additive functors $(\mathsf{Perm}^G)^\op\to \Ab$,
\end{cor}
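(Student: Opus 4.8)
The plan is to deduce Corollary~\ref{cor:ident_mackey_modules} from Theorem~\ref{thm:burnside_bouc_R} together with the functorial picture of Mackey modules recalled in~\S\ref{subsec:functorial}. Recall that by Theorem~\ref{thm:burnside_bouc_R} the functor~$k^G$ restricts to a tensor equivalence $\mathsf{perm}^G \simeq \mathcal B_{R^G}$, and that by Bouc's theorem (\cite{bouc:green_functors}*{Thm.~3.3.5}) there is a tensor equivalence $\Ab^{(\mathcal B_{R^G})^\op} \simeq R^G\textrm-\Mack$, where the left-hand side carries the Day convolution product (Remark~\ref{rem:alt_def_tensor}). First I would observe that an equivalence of ($\Z$-linear) categories $F\colon \mathsf{perm}^G \xrightarrow{\sim} \mathcal B_{R^G}$ induces, by precomposition, an equivalence of the additive functor categories $(F^\op)^*\colon \Ab^{(\mathcal B_{R^G})^\op} \xrightarrow{\sim} \Ab^{(\mathsf{perm}^G)^\op}$; composing with Bouc's equivalence gives the desired $(\mathsf{perm}^G)^\op\to\Ab \;\;\simeq\;\; R^G\textrm-\Mack$.

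For the tensor statement: both $\mathsf{perm}^G$ and $\mathcal B_{R^G}$ carry symmetric monoidal structures (given on objects by $C(X)\otimes C(Y)\cong C(X\times Y)$, respectively by $X\times Y$, cf. Remark~\ref{rem:alt_def_tensor}), and by Theorem~\ref{thm:burnside_bouc_R} the equivalence $k^G$ is symmetric monoidal. Since Day convolution is functorial in the underlying monoidal category in the appropriate sense, a symmetric monoidal equivalence $\mathsf{perm}^G\simeq\mathcal B_{R^G}$ induces a symmetric monoidal equivalence between the corresponding Day convolution categories $\Ab^{(\mathsf{perm}^G)^\op}\simeq\Ab^{(\mathcal B_{R^G})^\op}$; chaining with the monoidal equivalence $\Ab^{(\mathcal B_{R^G})^\op}\simeq R^G\textrm-\Mack$ of Remark~\ref{rem:alt_def_tensor} yields the claim. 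This step only requires the standard universal property of Day convolution (the free symmetric monoidal cocompletion), so it is essentially formal.

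For the last sentence, about coproduct-preserving additive functors $(\mathsf{Perm}^G)^\op\to\Ab$: here I would argue that $\mathsf{Perm}^G$ is, up to equivalence, the closure of $\mathsf{perm}^G$ under countable coproducts, and that a coproduct-preserving additive functor out of $(\mathsf{Perm}^G)^\op$ is determined by its restriction to $(\mathsf{perm}^G)^\op$ — more precisely, restriction along $\mathsf{perm}^G\hookrightarrow\mathsf{Perm}^G$ gives an equivalence between coproduct-preserving additive functors $(\mathsf{Perm}^G)^\op\to\Ab$ and \emph{all} additive functors $(\mathsf{perm}^G)^\op\to\Ab$ (a form of Kan extension: extend along $\mathsf{perm}^G\hookrightarrow\mathsf{Perm}^G$ by sending $\bigoplus_i C(G/H_i)$ to $\prod_i$ of the values, which is the product since we land in contravariant functors). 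The same monoidal bookkeeping as above carries through.

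The main obstacle, such as it is, lies in making the coproduct-preserving claim precise: one must check that every object of $\mathsf{Perm}^G$ is genuinely a countable coproduct (in $\KK^G$, hence in $\mathsf{Perm}^G$) of objects of $\mathsf{perm}^G$ — which follows from the explicit description $A\cong\bigoplus_{i\in I}C(G/H_i)$ given in the definition of $\mathsf{Perm}^G$ — and that the $C(G/H)$ remain compact enough inside $\mathsf{Perm}^G$ for the restriction functor to be fully faithful and essentially surjective onto additive functors; but since $\mathsf{perm}^G$ has only finitely many indecomposable objects $C(G/H)$ up to isomorphism, this is unproblematic. All remaining verifications (bilinearity, that the induced functors are inverse equivalences, compatibility of the monoidal constraints) are routine and I would leave them to the reader, as is done elsewhere in the paper.
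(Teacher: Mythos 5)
Your proposal is correct and takes essentially the same route as the paper: combine the tensor equivalence $k^G\colon\mathsf{perm}^G\simeq\mathcal B_{R^G}$ from Theorem~\ref{thm:burnside_bouc_R} with Bouc's equivalence $\Ab^{(\mathcal B_{R^G})^\op}\simeq R^G\textrm-\Mack$, noting that Day convolution gives the right monoidal structure. You spell out the $\mathsf{Perm}^G$ case (Kan extension, compactness of the $C(G/H)$ from Proposition~\ref{prop:cptly_gen}) in more detail than the paper does, but the argument is the same in substance.
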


\begin{proof}
We know from Theorem~\ref{thm:burnside_bouc_R} that $k^G_*:\mathsf{perm}^G\simeq \mathcal B_{R^G}^G$ as tensor categories, so this is just Bouc's functorial picture for $R^G$-Mackey modules (\S\ref{subsec:functorial}).
Day convolution provides the correct tensor structure by construction, \emph{cf}.\ Remark~\ref{rem:alt_def_tensor}. 
\end{proof}

\begin{remark}
\label{rem:stable_htpy}
Corollary \ref{cor:ident_mackey_modules} should be compared with the following result, see~\cite{lewis-may-steinberger}*{Proposition V.9.6}: the Burnside category $\mathcal B=\mathcal B_{Bur}$ is equivalent to the full subcategory in the stable homotopy category of $G$-equivariant spectra,  $\mathsf{SH}^G$,
with objects all suspension spectra $\Sigma^\infty X_+$ for $X\in G\textrm-\set$. 
The authors of \emph{loc.\,cit.}\ define Mackey functors for a compact Lie group~$G$ precisely so that the analogous statement remains true in this case. 
It would be interesting to know whether the same definition proves useful for the study of $\KK^G$ when~$G$ is a  compact Lie group. 
\end{remark}

\begin{remark}
In principle, it must be possible to prove Theorem~\ref{thm:burnside_bouc_R} directly, without appealing to Theorem~\ref{thm:fully_faithful}. 
First notice that 
\begin{align*}
\label{iso_dir}
 \KK^G(C(G/H), C(G/L)) 
& \cong \KK^H(\unit , \Res{G}{H} C(G/L) ) \\
& \cong \bigoplus_{x\in [H\backslash G/L]} R(H\cap {}^x\! L) \\
& = R^G(G/H \times G/L) \\
& \smash{\stackrel{\textrm{def.}}{=} } \mathcal B_{R^G} (G/H, G/L) 
\end{align*}
 for all $H,L\leqslant G$, by the $(\Ind{}{},\Res{}{})$-adjunction in KK-theory.
 Then it remains ``only'' to prove that this identification takes the composition of~$\KK^G$ to the composition of~$\mathcal B_{R^G}$. 
But this seems like a lot of work: the Kasparov product is famously difficult to compute explicitly (although, admittedly, we are dealing here with an easy special case), and the explicit formula for the composition in the Burnside-Bouc category is also rather involved (see \cite{bouc:green_functors}*{\S3.2}).
In order to do this, one could perhaps use the correspondences of~\cite{emerson-meyer:corr} and their geometric picture of the Kasparov product.
Anyway, once Theorem~\ref{thm:burnside_bouc_R} is proved it is then possible to use abstract considerations to derive from it Theorem~\ref{thm:fully_faithful}, rather than the other way round. 
\end{remark}

\section{Relative homological algebra and $G$-cell algebras}
\label{sec:rel_hom}

We begin by recalling from~\cite{meyernest_hom} and~\cite{meyer:hom} a few definitions and results of \emph{relative homological algebra in triangulated categories}. 
This will allow us to establish some notation that will be used throughout the rest of the article.  

In the following, let $\mathcal T$ be a triangulated category endowed with arbitrary coproducts; for simplicity, we still assume that the shift 
functor~$\mathcal T\to \mathcal T$, $A\mapsto A[1]$, is a strict automorphism, rather than just a self-equivalence.

\begin{defi}
\label{hyp:rel}
It will be convenient to define the \emph{periodicity}, written~$\pi$, of the shift functor~$[1]\colon \mathcal T\to \mathcal T$ to be the smallest positive integer~$\pi$ such that there exists an isomorphism $[n]\cong \id_{\mathcal T}$, if such an integer exists; if it does not, we set~$\pi:=0$.
\end{defi}

\subsection{Recollections and notation}
A \emph{stable} abelian category is an abelian category equipped with an automorphism $M\mapsto M[1]$, called \emph{shift}.
A \emph{stable homological functor} is an additive functor $F\colon \mathcal T\to \mathcal A$ to a stable abelian category~$\mathcal A$, which commutes with the shift and which sends distinguished triangles to exact sequences in~$\mathcal A$.
In particular, a stable homological functor is homological in the usual sense. 

Conversely, if $F\colon \mathcal T\to \mathcal A$ is a homological functor to some abelian category~$\mathcal A$, then
we can construct a stable homological functor 
\[
F_* \colon \mathcal T\to \mathcal A_{\Z/\pi}
\]
as follows (recall that we allow $\pi=0$, in which case we have $\Z/\pi=\Z$). 
Here~$\mathcal A_{\Z/\pi}$ denotes the stable abelian category of $\Z/\pi$-graded objects in~$\mathcal A$ with degree preserving morphisms. As a category, it is simply the product $\mathcal A_{\Z/\pi}=\prod_{i\in\Z/\pi} \mathcal A$; 
we write $M_i$ for the $i$-th component of an object~$M$, and similarly for morphisms. 
The shift functor is given by $(M[1])_i := M_{i-1}$, and \emph{ditto} for morphisms.
Then we define~$F_*$ by $F(A)_i = F_i(A) := F(A[-i])$.

\begin{remark}
This choice of degree follows the usual (\emph{ho}mological) indexing conventions, according to which a distinguished triangle $A\to B\to C\to A[1]$ gives rise to a long exact sequence of the form $\ldots \to F_iA \to F_iB\to F_iC\to F_{i-1}A\to \ldots$. 
Note that, if instead $F\colon \mathcal T^{\op}\to \mathcal A$ is a contravariant homological functor, then the usual convention requires us to write indices \emph{up}, $F^i(A):=F(A[-i])$, to indicate that differentials now increase degree: 
$\ldots F^{i-1}(A)\to F^i(C)\to F^i(B)\to F^i(A)\ldots $. 
If one must insist in using homological notation (as we will do later with graded Yoneda and graded Ext groups), then one uses the conversion rule $F^i=F_{-i}$.
\end{remark}

A \emph{homological ideal}~$\mathcal I$ in~$\mathcal T$ is the collection of morphisms of~$\mathcal T$ vanishing under some stable homological functor~$H$: 
\[
\mathcal I= \ker H := \{f\in \Mor(\mathcal T)\mid F(f)=0 \} .
\]
Thus in particular $\mathcal I$ is a categorical ideal which is closed under shifts of maps.
Note that different stable homological functors~$H$ can define the same homological ideal~$\mathcal I$, but it is the latter datum that is of primary interest and will determine all ``relative'' homologico-algebraic notions.\footnote{There is an elegant axiomatic approach due to Beligiannis~\cite{beligiannis:relative} that does justice to this observation, but we will not use it here.} 
A homological functor $F\colon \mathcal T\to \mathcal A$ is \emph{$\mathcal I$-exact} if $\mathcal I\subseteq \ker F$.
An object $P\in \mathcal T$ is \emph{$\mathcal I$-projective} if $\mathcal T(P,\variable)\colon \mathcal T\to \Ab$ is $\mathcal I$-exact.
An \emph{$\mathcal I$-projective resolution} of an object $A\in \mathcal T$ is a diagram
$\ldots P_n\to P_{n-1}\to \ldots\to P_1\to P_0\to A\to 0$ in~$\mathcal T$ such that each $P_n$ is $\mathcal I$-projective and such that the sequence is \emph{$\mathcal I$-exact} in a suitable sense (see \cite{meyernest_hom}*{\S3.2}).

Let $F\colon \mathcal T\to \mathcal A$ be an additive (usually homological) functor to an abelian category, and let $n\geqslant 0$ be a nonnegative integer.
The \emph{$n$-th  $\mathcal I$-relative left derived functor of~$F$}, written $\mathsf L_n^\mathcal I F$, is the functor $\mathcal T\to \mathcal A$ obtained by taking an object $A\in \mathcal T$, choosing a projective resolution $P_\bullet$ for it, applying~$F$ to the complex~$P_\bullet$ and taking the $n$-th homology of the resulting complex in~$\mathcal A$ --- in the usual way.
In the case of a contravariant functor, $F\colon \mathcal T^\op\to \mathcal A$, we can still use $\mathcal I$-projective resolutions in the same way in~$\mathcal T$ to define the $\mathcal I$-relative \emph{right} derived functors $\mathsf R^n_\mathcal I F \colon \mathcal T^\op\to \mathcal A$.

\begin{remark}
One of course has to prove that the recipes for $\mathsf L_n^\mathcal IF$ and $\mathsf R^n_\mathcal I F$ yield well-defined functors. This is always the case -- as in our examples -- as soon as there are enough $\mathcal I$-projective objects, in the precise sense that for every $A\in \mathcal T$ there exists a morphism $P\to A$ fitting into a distinguished triangle $B\to P\to A\to B[1]$ where~$P$ is $\mathcal I$-projective and $(A\to B[1])\in \mathcal I$.
All our examples have enough $\mathcal I$-projectives but possibly not enough $\mathcal I$-injectives, which causes the above asymmetrical definition of derived functors.
\end{remark}

\begin{remark}
It is immediate from the definitions that one may stabilize either before or after taking derived functors, namely: 
$(\mathsf L_n^\mathcal I F)_*=\mathsf L_n^\mathcal I (F_*)$ and 
$(\mathsf R^n_\mathcal I F)_*=\mathsf R^n_\mathcal I (F_*)$.
\end{remark}

\subsection{The graded restricted Yoneda functor}
\label{subsec:general}

Assume now that we are given an (essentially) small set $\mathcal G\subseteq \mathcal T$ of \emph{compact} objects; that is, the functor $\mathcal T(X,\variable)\colon \mathcal T\to \Ab$ commutes with arbitrary coproducts for each $X\in \mathcal G$.  

Our goal is to understand the homological algebra in~$\mathcal T$ relative to~$\mathcal G$, that is, relative to the homological ideal
\[
\mathcal I 
:= \bigcap_{X \in \mathcal G} \ker \mathcal T(X, \variable)_*
= \{ f\in \Mor (\mathcal T) \mid \mathcal T(X[i],f)=0 \; \forall i\in \Z/\pi, X\in \mathcal G\}
.
\]
The reason we bother with this generality is that, already at this level, Ralf Mayer's ABC spectral sequence~\cite{meyer:hom} specializes to a pleasant-looking universal coefficient spectral sequence (see Theorem~\ref{thm:abstract_uct} below). 


Let $\mathcal T(A,B)_*=\{ \mathcal T(A[i], B) \}_{i\in \Z/\pi}$ denote the graded Hom in~$\mathcal T$ induced by the shift automophism, and let~$\mathcal T_*$ denote the $\Z/\pi$-graded category with the same objects as~$\mathcal T$ and composition given by
\begin{align*}
& \mathcal T(B[j], C) \times \mathcal T(A[i],B) \to \mathcal T(A[i+j], B)  \\
& \quad \quad (g , f) \quad \mapsto \quad gf :=  g \circ f[j] \quad .
\end{align*}
Similarly, denote by $\mathcal G_*$ the full graded subcategory of~$\mathcal T_*$ containing the objects of~$\mathcal G$.
Let $\GrMod \textrm-  \mathcal G_*$ be the category of \emph{graded right $\mathcal G_*$-modules}.  
Its objects are the degree-preserving functors $M\colon (\mathcal G_*)^{\op}\to (\Ab_{\Z/\pi})_*$ into the \emph{graded} category of graded abelian groups, and its morphisms are grading preserving natural transformations $\varphi: M\to M'$, \ie, families $\varphi_{i,X}\colon M_i(X)\to M_i'(X)$ ($i\in \Z/\pi, X\in \mathcal G$) of homomorphisms commuting with maps $M(f)$ of all degrees.
Note that $\GrMod \textrm-  \mathcal G_*$ is a stable abelian category with shift functor given by  $(M[k])_i:= M_{i-k}$ and $(f[k])_i:= f_{i-k}$.

Every $A\in \mathcal T$ defines a graded $\mathcal G_*$-module 
$h_*(A) := \{ \; \mathcal T((\variable )[i], A)\at_{\mathcal G_*} \; \}_{i\in Z/\pi}$
 in a natural way, so that we get a \emph{\textup(restricted\textup) Yoneda functor}
 \[
h_* \colon \mathcal T \longrightarrow \GrMod\textrm- \mathcal G_* 
\]
which is stable homological and moreover preserves coproducts, since the objects of~$\mathcal G$ are compact.

\begin{lemma}
\label{lemma:enriched_Yoneda}
There is a natural isomorphism of $\Z/\pi$-graded abelian groups 
\begin{equation}
\GrMod\textrm- \mathcal G_*( h_*(X), M)_* \cong M(X) 
\end{equation}
for all $X\in \mathcal G$ and all $M\in \GrMod\textrm-\mathcal G_*$, which sends the natural transformation $\varphi:h_*(X)[i]\to M$ to the element $\varphi_{i,X}(1_X)\in M_i(X)$.
\end{lemma}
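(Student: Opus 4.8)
The statement is a graded Yoneda lemma for the category $\GrMod\text-\mathcal G_*$ of graded right $\mathcal G_*$-modules, and the plan is to deduce it by reducing to the ordinary (ungraded) Yoneda lemma applied degreewise, then checking that everything respects the grading.

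\textbf{Setup and the candidate map.}
First I would fix $X\in \mathcal G$ and $M\in \GrMod\text-\mathcal G_*$, and write down explicitly the two maps that will be inverse to each other. In degree $i\in \Z/\pi$, an element of $\GrMod\text-\mathcal G_*(h_*(X),M)_i$ is by definition a morphism $\varphi\colon h_*(X)[i]\to M$ in $\GrMod\text-\mathcal G_*$, i.e.\ a grading-preserving natural transformation; concretely it is a family of homomorphisms $\varphi_{j,Y}\colon (h_*(X)[i])_j(Y)=\mathcal T(Y[j-i],X)\to M_j(Y)$, for $j\in\Z/\pi$ and $Y\in\mathcal G$, commuting with the action of all morphisms in $\mathcal T_*$. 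The evaluation map $\mathrm{ev}$ sends such a $\varphi$ to $\varphi_{i,X}(1_X)\in M_i(X)$ (note $(h_*(X)[i])_i(X)=\mathcal T(X[0],X)=\mathcal T(X,X)$ contains $1_X$, using the strict invertibility of the shift so that $X[0]=X$). In the reverse direction, given $m\in M_i(X)$, define $\psi^m$ by $\psi^m_{j,Y}(f):= M(f)(m)$ for $f\in\mathcal T(Y[j-i],X)$; here $M(f)$ is the structure map associated to the morphism $f$ of degree $j-i$ in $\mathcal T_*$, which by the grading convention on $M$ carries $M_i(X)$ to $M_{i+(j-i)}(Y)=M_j(Y)$, so $\psi^m$ is grading-preserving as required.

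\textbf{Verifying the two maps are mutually inverse.}
The composite $\mathrm{ev}\circ\psi$ sends $m$ to $\psi^m_{i,X}(1_X)=M(1_X)(m)=m$ by functoriality, so one direction is immediate. For the other direction, given $\varphi$ one must show $\psi^{\varphi_{i,X}(1_X)}=\varphi$, i.e.\ that $\varphi_{j,Y}(f)=M(f)(\varphi_{i,X}(1_X))$ for every $f\in\mathcal T(Y[j-i],X)$. This is exactly the naturality of $\varphi$: viewing $f$ as a morphism $f\colon Y[j-i]\to X$ in $\mathcal T_*$, the naturality square for $\varphi$ against $f$ reads $\varphi_{j,Y}\circ (h_*(X)[i])(f)= M(f)\circ \varphi_{i,X}$, and evaluating both sides at $1_X\in \mathcal T(X,X)=(h_*(X)[i])_i(X)$ gives $\varphi_{j,Y}(f\cdot 1_X)=\varphi_{j,Y}(f)$ on the left (the right $\mathcal G_*$-module structure on $h_*(X)=\mathcal T(-,X)$ is by precomposition) and $M(f)(\varphi_{i,X}(1_X))$ on the right. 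This is the crux of the argument, and it is just the usual Yoneda computation performed with an extra grading bookkeeping.

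\textbf{Naturality and assembling the graded isomorphism.}
Finally I would check that the bijections $\mathrm{ev}$, as $i$ ranges over $\Z/\pi$, assemble into an isomorphism of $\Z/\pi$-graded abelian groups natural in $M$ (and in $X$, for $X$ varying in $\mathcal G_*$). Naturality in $M$ is formal: a morphism $\theta\colon M\to M'$ in $\GrMod\text-\mathcal G_*$ satisfies $\theta\circ\varphi\mapsto \theta_{i,X}(\varphi_{i,X}(1_X))$, which equals the image of $\mathrm{ev}(\varphi)=\varphi_{i,X}(1_X)$ under $\theta_{i,X}\colon M_i(X)\to M'_i(X)$. That each $\mathrm{ev}$ respects composition of the degree-shift (so the family is genuinely a morphism of graded groups, not merely a collection of isomorphisms) follows because shifting $\varphi$ by $[k]$ merely reindexes, as spelled out in the definition of the shift on $\GrMod\text-\mathcal G_*$. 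I expect no real obstacle here; the only point requiring a little care is to keep the sign/index conventions on the shift consistent throughout — in particular the convention $(M[k])_i=M_{i-k}$ and the composition law in $\mathcal T_*$, $g f := g\circ f[j]$ for $f$ of degree $i$ and $g$ of degree $j$ — so that the degrees match in $\psi^m_{j,Y}(f)=M(f)(m)\in M_j(Y)$. Once the conventions are pinned down, the proof is a one-paragraph invocation of Yoneda; this is essentially the graded analogue of the unenriched statement, so I would present it tersely.
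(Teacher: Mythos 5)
Your proof is correct, and it is precisely the direct verification that the paper alludes to when, after citing the enriched Yoneda lemma from Kelly's book, it adds ``It can also be easily proved by hand.'' The approach is therefore the same — recognizing the statement as a graded/enriched Yoneda lemma — with your write-up simply expanding the citation into the explicit degree-bookkeeping that the paper leaves to the reader.
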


\begin{proof}
This follows from the Yoneda lemma for $\Z/\pi$-graded $\Z$-linear categories, \ie, for categories enriched over the closed symmetric monoidal category~$\Ab_{\Z/\pi}$ (see~\cite{kelly:enriched_book}). It can also be easily proved by hand.
\end{proof}

We see in particular that the collection $\{h_*(X)[i]\mid i\in \Z/\pi , X\in \mathcal G\}$ forms a set of projective generators for~$\Gr\Mod\textrm- \mathcal G_*$.

We also note that~$h_*$ detects the vanishing of any object in  the localizing triangulated subcategory 
\[
\Cell(\mathcal T,\mathcal G):=  \langle \mathcal G \rangle_\loc \, \subseteq\, \mathcal T
\]
generated by~$\mathcal G$.

\begin{example}
\label{example:KKG}
Let $\mathcal T= \KK^G$ be the equivariant Kasparov category of a finite group~$G$, and let 
$\mathcal G:= \{C(G/H) \mid H\leqslant G \}$.
By Bott periodicity, we must grade over~$\Z/2$.
Then $\Cell(\mathcal T,\mathcal G)= \Cell^G$ , and the stable abelian category $\GrMod \textrm- \mathcal G_*$ is just $R^G\textrm-\Mack_{\Z/2}$, the category of Mackey modules over the representation Green functor for~$G$.
This is because in this case $\mathcal G(X[1],Y)=0$ for all $X,Y\in \mathcal G$, so that $\GrMod \textrm- \mathcal G_*$ is quite simply the category of $\Z/2$-graded objects in the usual ungraded module category $\Mod\textrm-\mathcal G$, and we know from Corollary~\ref{cor:ident_mackey_modules} that the latter is equivalent to $R^G\textrm- \Mack$.
If $G=1$ is the trivial group, then $\Cell(\mathcal T,\mathcal G)$ is the classical Bootstrap category of separable $\Cstar$-algebras~\cite{rs}, and $\Gr\Mod \textrm- \mathcal G_*$ is the category of graded abelian groups, $\Ab_{\Z/2}$.
\end{example}

\begin{prop}
\label{prop:univ_exact}
The restricted Yoneda functor 
$h_* \colon \mathcal T  \to \GrMod\textrm-\mathcal G_* $
is the universal $\mathcal I$-exact stable homological functor on~$\mathcal T$.
In particular, $h_*$ induces an equivalence between the category of $\mathcal I$-projective objects in~$\mathcal T$ and that of  projective graded right $\mathcal G$-modules:
\[
h_* \colon \Projobj(\mathcal T,\mathcal I) \simeq 
 \Projobj(\GrMod\textrm-\mathcal G_*) ,
\]
and for every $A\in \mathcal T$ it induces a bijection between $\mathcal I$-projective resolutions of~$A$ in~$\mathcal T$ and projective resolutions of the graded $\mathcal G$-module~$h_*(A)$.

\end{prop}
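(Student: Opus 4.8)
The plan is to derive the proposition from the general relative homological algebra of \cite{meyernest_hom, meyer:hom}; the only concrete inputs needed are the enriched Yoneda isomorphism of Lemma~\ref{lemma:enriched_Yoneda} and the compactness of the objects of~$\mathcal G$. First observe that $\mathcal I = \ker h_*$, since $h_*(f) = 0$ holds exactly when $\mathcal T(X[i], f) = 0$ for all $X\in\mathcal G$ and all $i\in\Z/\pi$, which is the defining condition for $f\in\mathcal I$; thus $h_*$, which is stable homological and coproduct-preserving by \S\ref{subsec:general}, is in particular $\mathcal I$-exact. Next I would identify the $\mathcal I$-projective objects: each $X\in\mathcal G$ is $\mathcal I$-projective because $\mathcal I\subseteq\ker\mathcal T(X,\variable)$, and compactness of these objects closes the class of $\mathcal I$-projectives under arbitrary coproducts, while closure under shifts and retracts is clear. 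For the converse, given $A\in\mathcal T$ form $P(A) := \bigoplus_{X\in\mathcal G}\bigoplus_{i\in\Z/\pi}\bigoplus_{f\in\mathcal T(X[i],A)} X[i]$ with its evaluation morphism $\varepsilon\colon P(A)\to A$; then $\mathcal T(X[i],\varepsilon)$ is surjective for all $X, i$, so completing $\varepsilon$ to a triangle $B\to P(A)\xrightarrow{\varepsilon} A\xrightarrow{\delta} B[1]$ gives $\delta\in\mathcal I$ and shows that $\mathcal I$ has enough projectives. Applying this to an $\mathcal I$-projective $P$ and using $\mathcal T(P,\delta) = 0$ (valid because $\delta\in\mathcal I$ and $P$ is $\mathcal I$-projective) to lift $\id_P$ through $\varepsilon$, one finds that $\varepsilon$ is a split epimorphism, so every $\mathcal I$-projective is a retract of a coproduct of shifted objects of~$\mathcal G$; hence $\Projobj(\mathcal T,\mathcal I)$ is the idempotent completion inside~$\mathcal T$ of the additive span of $\{X[i]\mid X\in\mathcal G,\ i\in\Z/\pi\}$.

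The key step is to show that $h_*\colon\mathcal T(P,A)\to\GrMod\textrm-\mathcal G_*(h_*P, h_*A)$ is bijective whenever $P$ is $\mathcal I$-projective and $A\in\mathcal T$ is arbitrary. Both sides send coproducts in~$P$ to products and are compatible with retracts (on the right because $h_*$ preserves coproducts, by compactness of the objects of~$\mathcal G$), so by the previous paragraph it suffices to treat $P = X[i]$ with $X\in\mathcal G$; there, the composite of $h_*$ with the enriched Yoneda isomorphism of Lemma~\ref{lemma:enriched_Yoneda} is, by direct inspection, the identity of $\mathcal T(X[i],A) = h_*(A)(X)_i$. In particular $h_*$ is fully faithful on~$\Projobj(\mathcal T,\mathcal I)$. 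Since the projective generators of $\GrMod\textrm-\mathcal G_*$ are the representables $h_*(X)[i]$ (the remark following Lemma~\ref{lemma:enriched_Yoneda}), every projective module is a retract of a coproduct of these; combining this with the coproduct-preservation of $h_*$ and the idempotent completeness of~$\mathcal T$ (which has all coproducts, whence B\"okstedt--Neeman applies), one concludes that $h_*$ restricts to an equivalence $\Projobj(\mathcal T,\mathcal I)\simeq\Projobj(\GrMod\textrm-\mathcal G_*)$: given a projective module, present it as the image of an idempotent on a coproduct of representables, lift that idempotent along~$h_*$ (fullness), check it is idempotent (faithfulness), and split it in~$\mathcal T$.

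For the universality assertion I would then invoke the characterization of the universal $\mathcal I$-exact stable homological functor from \cite{meyernest_hom}: a stable homological $\mathcal I$-exact functor whose target abelian category has enough projectives, in which moreover every object is a cokernel of a morphism between projectives, and which restricts to an equivalence on projective objects, is the universal one. These hypotheses hold for $h_*$ by the previous steps (the cokernel condition being automatic in a functor category). Finally, the bijection between $\mathcal I$-projective resolutions of $A$ and projective resolutions of $h_*(A)$ is formal: $h_*$ takes an $\mathcal I$-projective resolution to a complex of projectives that is exact, because for the universal functor $\mathcal I$-exactness of a complex over~$\mathcal T$ amounts to exactness of its image \cite{meyernest_hom}; conversely one lifts a projective resolution of $h_*(A)$ term by term through the equivalence just established, lifting the augmentation $Q_0\to h_*(A)$ by the full faithfulness of $h_*$ out of the $\mathcal I$-projective $P_0$ into the arbitrary object~$A$, and the resulting complex over~$A$ is $\mathcal I$-exact because $h_*$ returns the original exact complex. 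I expect the main obstacle to be essentially organizational: marshalling the explicit description of the $\mathcal I$-projectives and the enriched Yoneda lemma into the precise form required by the Meyer--Nest universality criterion. There is no hard analysis or combinatorics here; the single non-formal ingredient is the enriched Yoneda lemma, and the single external black box is that criterion.
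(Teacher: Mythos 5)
Your proposal is correct and follows essentially the same route as the paper: both verify the Meyer--Nest universality criterion from~\cite{meyernest_hom} with the enriched Yoneda isomorphism of Lemma~\ref{lemma:enriched_Yoneda} as the only concrete input, and both rely on B\"okstedt--Neeman for idempotent splitting. The paper phrases the verification in terms of constructing the partial section/adjoint $\ell$ directly on the representable modules $h_*(X)[i]$ and extending additively via~\cite{meyernest_hom}*{Remark 58}, whereas you instead first identify the $\mathcal I$-projectives of~$\mathcal T$ explicitly (retracts of coproducts of shifted objects of~$\mathcal G$) and then verify that $h_*\colon\mathcal T(P,A)\to\GrMod\textrm-\mathcal G_*(h_*P,h_*A)$ is bijective for $P$ $\mathcal I$-projective and $A$ arbitrary; these are the same computation viewed from the two sides of the adjunction, so the difference is presentational rather than substantive.

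One small caveat: the version of the criterion you state — that an $\mathcal I$-exact stable homological functor which restricts to an equivalence on projectives (into a target with enough projectives) is automatically universal — is missing a hypothesis. An equivalence on projectives alone gives the adjunction isomorphism $\mathcal T(\ell P, B)\cong\GrMod\textrm-\mathcal G_*(P,h_*B)$ only when $B$ is itself $\mathcal I$-projective, whereas~\cite{meyernest_hom}*{Theorem 57} requires it for all $B\in\mathcal T$. Fortunately, your ``key step'' proves exactly this bijection for arbitrary~$B$, so all the hypotheses of the criterion are in fact established; you need only cite it in its precise form (asking for a partial right-inverse which is also a partial left adjoint on projectives) rather than the weaker-sounding one you wrote.
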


\begin{proof}
Since the stable homological functor~$h_*$ is $\mathcal I$-exact by definition, and
since idempotent morphisms in~$\mathcal T$ split (because of the existence of countable coproducts, \cite{bokstedt_neeman}*{Prop.~3.2}), 
we may use the criterion of \cite{meyernest_hom}*{Theorem~57}. 
Since the abelian category $\GrMod \textrm-\mathcal G_*$ has enough projectives, it remains to show the existence of a partial right-inverse and partial left adjoint to~$h_*$ defined on projective modules, \ie, the existence of a functor 
$\ell \colon \Projobj(\GrMod\textrm-\mathcal G_*) \to \mathcal T$ and two natural isomorphisms
\[
 h_*\circ \ell (P)\cong P
\quad \textrm{and} \quad
\mathcal T(\ell P,B) \cong \GrMod\textrm-\mathcal G_* (P,h_*B)
\quad (P\in \Projobj\textrm-\mathcal G_* \; , \; B\in \mathcal T).
\]
Since the projective objects in $\GrMod\textrm-\mathcal G_*$ are additively generated by the representables $ h_*(X[i]) = h_*(X)[i]$, for $X\in \mathcal G$ and $i\in \Z/\pi$, by \cite{meyernest_hom}*{Remark~58} we need only define the functor~$\ell$ and the natural isomorphisms on the full subcategory $\{h_*(X)[i] \mid X\in \mathcal G, i\in \Z\} \subset \GrMod\textrm-\mathcal G_*$.
For each such object, we set $\ell(h_*(X)[i]):= X[i]$, so that indeed 
 $h_*\ell (h_*(X)[i]) = h_*(X)[i]$, 
 and also
 \begin{align*}
 \mathcal T(\ell (h_*X[i]) , B)
  = h_0(B)(X[i])  
  \cong \GrMod\textrm- \mathcal G_* (h_*X[i] , h_*B)
\end{align*}
 by the Yoneda Lemma~\ref{lemma:enriched_Yoneda}.
 It is now evident how to extend~$\ell$ on morphisms.
\end{proof}

\begin{remark}
\label{rem:aleph}
If $\mathcal T$ does not have all set-indexed coproducts, the same argument still works if instead the following two hypotheses are satisfied:  
\begin{enumerate}
\item
There exists an uncountable regular cardinal~$\aleph$, such that~$\mathcal T$ has all small${}_{\aleph}$ coproducts, \ie, those indexed by sets of cardinality strictly smaller than~$\aleph$. (In particular $\mathcal T$ has at least all countable coproducts.)

\item
Every object $X\in \mathcal G$ in our generating set is \emph{small${}_{\aleph}$}, that is, the functor $\mathcal T(X,\variable)$ commutes with small${}_{\aleph}$ coproducts and sends every object $A\in \mathcal T$ to a  small${}_{\aleph}$ set:
$|\mathcal T(X,A)| < \aleph$.
\end{enumerate}
Then Proposition~\ref{prop:univ_exact} remains true, with precisely  the same proof, if in its statement we substitute $\GrMod\textrm-\mathcal G_*$ with the category of \emph{small${}_{\aleph}$} graded $\mathcal G_*$-modules, that is, those $M\in \GrMod\textrm-\mathcal G_*$ such that each $M(X)_i$ is small${}_{\aleph}$.
For our application to KK-theory, we will have to use this $\aleph$-relative version of the statement with~$\aleph=\aleph_1$.
\end{remark}

For the following next general statements, we may either assume that~$\mathcal T$ has arbitrary coproducts and the objects of~$\mathcal G$ are compact, or that $\mathcal T$ and~$\mathcal G$ satisfy the hypothesis of Remark~\ref{rem:aleph}.

\begin{notation}
Let 
$\Ext_{\mathcal G_*}^n(M,N)_*$
be the graded Ext functor in $\GrMod\textrm - \mathcal G_*$. 
In other words, $\Ext_{\mathcal G_*}^n(\variable,N)_*$ denotes the right derived functors of the \emph{graded} Hom functor $\GrMod\textrm -\mathcal G_*(\variable, N)_*\colon \GrMod\textrm-\mathcal G_* \to \Ab_{\Z/\pi}$; as usual, it is computed by projective resolutions of graded $\mathcal G$-modules. 
If, as in Example~\ref{example:KKG}, the category $\mathcal G_*=\mathcal G$ has only  maps in degree zero, then $\GrMod\textrm-\mathcal G_*= (\Mod\textrm-\mathcal G)_{\Z/\pi}$, and we may compute the graded Ext in terms of the ungraded Ext functors, according to the formula
\[
\Ext^n_{\mathcal G_*}(M,N)_\ell
=
\bigoplus_{i+j=\ell} \Ext^n_\mathcal G(M_i, M_j)
\qquad (n\in \N, \ell\in \Z/\pi ).
\]
\end{notation}

\begin{prop}
\label{prop:comput_rel_der}

If $F$ is a homological functor $F\colon \mathcal T^{\op}\to  \Ab$ sending coproducts in~$\mathcal T$ to products of abelian groups, 
then there are natural isomorphisms
\[
\mathsf R^n_{\mathcal I}F^* \cong \Ext^n_{\mathcal G_*}(h_*(\variable) , F\at_{\mathcal G_*})_{-*}
\qquad (n\in \N) 
\]
computing its right $\mathcal I$-relative derived functors.
Here $F\at_{\mathcal G_*}\colon \mathcal G_*\to \Ab_{\Z/2}$ denotes the graded $\mathcal G_*$-module obtained by considering the restriction of~$F$ to the full subcategory $\{ X[i] \mid X\in \mathcal G, i\in \Z/\pi \}\subseteq \mathcal T$.
\end{prop}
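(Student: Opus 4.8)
The plan is to compute both sides of the claimed identity from one and the same resolution, reducing the comparison to the graded Yoneda lemma. Fix $A\in\mathcal T$ and pick an $\mathcal I$-projective resolution $\cdots\to P_1\to P_0\to A\to0$; by Proposition~\ref{prop:univ_exact} the functor $h_*$ sends it to a projective resolution $\cdots\to h_*(P_1)\to h_*(P_0)\to h_*(A)\to0$ in $\GrMod\textrm-\mathcal G_*$, and conversely every projective resolution of $h_*(A)$ arises this way. Hence $\mathsf R^n_\mathcal I F^*(A)$ is the $n$-th cohomology of the cochain complex $F^*(P_\bullet)$, while $\Ext^n_{\mathcal G_*}(h_*(A),F\at_{\mathcal G_*})_{-*}$ is the $n$-th cohomology of the cochain complex $\GrMod\textrm-\mathcal G_*\big(h_*(P_\bullet),F\at_{\mathcal G_*}\big)_{-*}$. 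Everything will therefore follow from a natural isomorphism
\[
F^*\ \cong\ \GrMod\textrm-\mathcal G_*\big(h_*(\variable),F\at_{\mathcal G_*}\big)_{-*}
\]
of contravariant functors on the full subcategory of $\mathcal I$-projective objects of $\mathcal T$: it produces an isomorphism of the two cochain complexes above, hence of their cohomologies, and naturality in $A$ follows from the standard comparison argument for resolutions, since $h_*$ carries a comparison map between $\mathcal I$-projective resolutions to one between the associated projective resolutions of $h_*(A)$.

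Both functors appearing in this isomorphism are additive and turn coproducts in $\mathcal T$ into products of $\Z/\pi$-graded abelian groups: the left-hand one by the hypothesis on $F$ together with the fact that the shift commutes with coproducts, and the right-hand one because $h_*$ preserves coproducts and $\GrMod\textrm-\mathcal G_*(\bigoplus_j\variable,N)=\prod_j\GrMod\textrm-\mathcal G_*(\variable,N)$. By Proposition~\ref{prop:univ_exact}, $h_*$ restricts to an equivalence between $\mathcal I$-projectives and projective graded $\mathcal G_*$-modules, and the latter are exactly the retracts of coproducts of the representables $h_*(X)[i]$ with $X\in\mathcal G$ and $i\in\Z/\pi$; so every $\mathcal I$-projective object of $\mathcal T$ is a retract of a coproduct of objects $X[i]$ with $X\in\mathcal G$. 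Using the additivity and the coproduct--product properties just noted, together with the compactness of the objects of $\mathcal G$ (which lets one decompose a map out of $X[i]$ into such a coproduct as a finite sum of its components) --- exactly as in the proof of Proposition~\ref{prop:univ_exact} via \cite{meyernest_hom}*{Remark~58} --- it suffices to produce the natural isomorphism on the full subcategory $\{X[i]\mid X\in\mathcal G,\ i\in\Z/\pi\}$.

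On that subcategory the isomorphism is nothing but the graded Yoneda Lemma~\ref{lemma:enriched_Yoneda}: it gives a natural isomorphism $\GrMod\textrm-\mathcal G_*\big(h_*(X)[i],F\at_{\mathcal G_*}\big)_*\cong (F\at_{\mathcal G_*})(X)[i]$ of $\Z/\pi$-graded abelian groups, and unwinding the definition of $F\at_{\mathcal G_*}$ as the restriction of $F$ to $\{X[j]\}$, of the shift on $\GrMod\textrm-\mathcal G_*$, and of the re-indexing $(\variable)_{-*}$, this matches the right-hand side degreewise with $F^*(X[i])$; naturality on $\{X[i]\}$ comes from the naturality clause in Lemma~\ref{lemma:enriched_Yoneda}.

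I expect the only genuinely fiddly point to be the degree bookkeeping --- reconciling the homological indexing of the stabilisation $F^*$ with the cohomological indexing of $\Ext_{\mathcal G_*}$ --- but this is purely a matter of keeping the conventions of \S\ref{subsec:general} fixed throughout, the re-indexing $(\variable)_{-*}$ in the statement being precisely the translation between the two. (In the case of primary interest, Example~\ref{example:KKG}, one has $\pi=2$, so $[i]\cong[-i]$ and the bookkeeping is immediate.) If $\mathcal T$ lacks some coproducts, the same argument applies verbatim under the hypotheses of Remark~\ref{rem:aleph}, with ``coproduct'' read as ``small${}_{\aleph}$ coproduct'' and $\GrMod\textrm-\mathcal G_*$ replaced by its subcategory of small${}_{\aleph}$ modules.
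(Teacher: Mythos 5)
Your proof is correct and follows essentially the same route as the paper: both rest on Proposition~\ref{prop:univ_exact}'s bijection between $\mathcal I$-projective resolutions in $\mathcal T$ and projective resolutions of graded $\mathcal G_*$-modules, and on the graded Yoneda Lemma~\ref{lemma:enriched_Yoneda} applied to $M=F\at_{\mathcal G_*}$ (extended additively to all $\mathcal I$-projectives) to identify $F$ with $\GrMod\textrm-\mathcal G_*\bigl(h_*(\variable),F\at_{\mathcal G_*}\bigr)$ on projectives. The only presentational difference is that the paper invokes \cite{meyernest_hom}*{Theorem~59} to obtain a canonical left-exact extension $\overline F$ of $F$ along $h_*$, identifies $\overline F$ with the Hom functor via the uniqueness clause, and then uses the abstract formula $\mathsf R^n_{\mathcal I}F\cong\mathsf R^n\overline F\circ h_*$, whereas you compare the two cochain complexes built from a common resolution directly and supply naturality by the standard comparison-of-resolutions argument --- a minor repackaging that trades the appeal to Theorem~59 for an explicit chain-level check.
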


\begin{proof}
By Proposition~\ref{prop:univ_exact}, we know that $h_* \colon \mathcal T  \to \GrMod\textrm-\mathcal G_* $ is the universal $\mathcal I$-exact functor.
By \cite{meyernest_hom}*{Theorem~59}, there exists (up to canonical isomorphism) a unique left exact functor $\overline F\colon (\GrMod\textrm- \mathcal G_*)^{\op} \to \Ab$ such that $\overline F\circ h_*(P)= F(P) $ for every $\mathcal I$-projective object~$P$ of~$\mathcal T$.
Since $h_*$ induces a bijection between $\mathcal I$-projective resolutions of~$A\in \mathcal T$ and projective resolutions of $h_*(A)\in \GrMod\textrm-\mathcal G_*$, there follows easily the existence of isomorphisms
\begin{equation}
\label{ident_contrav_rel_der}
\mathsf R^n_{\mathcal I}F \cong \mathsf R^n \overline F \circ h_* 
\qquad (n\in \N).
\end{equation}
By Lemma~\ref{lemma:enriched_Yoneda} for the module $M=F\at_{\mathcal G_*}$, there are natural isomorphisms
\begin{align*}
F(X[i]) 
& = F^{-i}(X) \\
& = (F\at_{\mathcal G_*})_i(X) \\
& \cong \GrMod\textrm-\mathcal G_*( h_*X, F\at_{\mathcal G_*})_i \\
& = \GrMod\textrm-\mathcal G_*( h_*(X[i]), F\at_{\mathcal G_*}) 
\end{align*}
for all $X\in \mathcal G$ and $i\in \Z/\pi$. 
Since every $\mathcal I$-projective object is a direct summand of a coproduct of such~$X[i]$,
we may extend this additively to a natural isomorphism 
\begin{align}
\label{yoneda_identif}
F(P) 
 \cong \GrMod\textrm-\mathcal G_*( h_*(P), F\at_{\mathcal G_*}) 
\end{align}
for all $P\in \Projobj(\mathcal T,\mathcal I)$.
Moreover, the Hom functor $\GrMod\textrm-\mathcal G_*(\variable , F\at_{\mathcal G_*})$ is left exact. 
Hence by \eqref{yoneda_identif} we can identify $\GrMod\textrm-\mathcal G_*(\variable , F\at_{\mathcal G_*})$ with~$\overline F$, because of the uniqueness property of the latter.
By injecting this knowledge into~\eqref{ident_contrav_rel_der} we get the required isomorphisms.
\end{proof}

As an important special case, we can now compute the $\mathcal I$-relative Ext functors (\emph{cf.}~\cite{meyer:hom}*{p.\,195}).

\begin{cor}
\label{cor:rel_ext}
There are isomorphisms
\[
\mathsf R^n_{\mathcal I} ( \mathcal T(\variable, B) ) \cong \Ext^n_{\mathcal G_*} (h_*(\variable) , h_*B)_0
\qquad (n\in \N)
\]
of functors $\mathcal T^\op\to \Ab$ for all $B\in \mathcal T$.
\end{cor}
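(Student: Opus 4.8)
The plan is to derive Corollary~\ref{cor:rel_ext} as a direct specialization of Proposition~\ref{prop:comput_rel_der}. First I would observe that for any fixed object $B\in\mathcal T$, the contravariant functor $F:=\mathcal T(\variable,B)\colon \mathcal T^{\op}\to\Ab$ is homological: applying $\mathcal T(\variable,B)$ to a distinguished triangle yields a long exact sequence of abelian groups by the standard properties of triangulated categories. Moreover $F$ sends coproducts in $\mathcal T$ to products of abelian groups, since $\mathcal T\big(\bigoplus_i A_i,B\big)\cong\prod_i\mathcal T(A_i,B)$ is the defining universal property of the coproduct. Thus $F$ satisfies exactly the hypotheses of Proposition~\ref{prop:comput_rel_der}.

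Applying that proposition gives natural isomorphisms
\[
\mathsf R^n_{\mathcal I}\big(\mathcal T(\variable,B)\big)
\;\cong\;
\Ext^n_{\mathcal G_*}\big(h_*(\variable),\,F\at_{\mathcal G_*}\big)_{-*}
\qquad(n\in\N).
\]
The remaining point is to identify the graded $\mathcal G_*$-module $F\at_{\mathcal G_*}$ with $h_*(B)$. By definition $F\at_{\mathcal G_*}$ is the restriction of $\mathcal T(\variable,B)$ to the full subcategory $\{X[i]\mid X\in\mathcal G,\ i\in\Z/\pi\}$, so its value at $X\in\mathcal G$ in degree $i$ is $F(X[i])=\mathcal T(X[i],B)$; but this is precisely $h_i(B)(X)=h_*(B)(X)_i$ by the definition of the restricted Yoneda functor $h_*$ in~\S\ref{subsec:general}. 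These identifications are compatible with the $\mathcal G_*$-module structure (the action of a morphism $X[i]\to Y[j]$ in $\mathcal G_*$ is in both cases induced by precomposition), so $F\at_{\mathcal G_*}=h_*(B)$ as graded $\mathcal G_*$-modules.

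Substituting this identity into the displayed isomorphism yields
\[
\mathsf R^n_{\mathcal I}\big(\mathcal T(\variable,B)\big)
\;\cong\;
\Ext^n_{\mathcal G_*}\big(h_*(\variable),\,h_*B\big)_{-*},
\]
and evaluating the $\Z/\pi$-grading in degree $0$ (equivalently, taking the degree-zero component $(-*)$ at $*=0$, which is the homogeneous component relevant to ungraded Hom, \emph{cf.}\ the conversion rule $F^i=F_{-i}$ and the Notation preceding Proposition~\ref{prop:comput_rel_der}) gives exactly the claimed isomorphisms $\mathsf R^n_{\mathcal I}(\mathcal T(\variable,B))\cong\Ext^n_{\mathcal G_*}(h_*(\variable),h_*B)_0$ of functors $\mathcal T^{\op}\to\Ab$. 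I do not anticipate a genuine obstacle here: the only mild subtlety is bookkeeping the grading conventions and checking that the identification $F\at_{\mathcal G_*}=h_*B$ respects the module structure, but both are immediate from the definitions already set up in~\S\ref{subsec:general}. In the KK-theoretic application (Example~\ref{example:KKG}), one must additionally invoke the $\aleph_1$-relative version of these results from Remark~\ref{rem:aleph}, but the formal argument is identical.
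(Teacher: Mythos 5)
Your proof is correct and follows essentially the same route as the paper's: apply Proposition~\ref{prop:comput_rel_der} to $F=\mathcal T(\variable,B)$, identify $F\at_{\mathcal G_*}=h_*B$ from the definitions, and take the degree-zero component. You spell out the verifications (homologicality, coproducts-to-products, compatibility of module structures, grading bookkeeping) that the paper leaves implicit, but there is no substantive difference.
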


\begin{proof}
For every $B\in \mathcal T$, the functor $F:= \mathcal T(\variable, B)\colon \mathcal T^\op\to \Ab$ satisfies the hypothesis of Proposition~\ref{prop:comput_rel_der}, and in this case we have $F\at_{\mathcal G_*}= h_*(B)$ by definition.
Now we apply the proposition and look at degree zero.
\end{proof}

\subsection{The universal coefficients spectral sequence}

We are ready to prove the following general form of the universal coefficient theorem. 
We do not claim any originality for this result, as it is already essentially in \cite{christensen:ideals} and~\cite{meyer:hom}.

\begin{thm}
\label{thm:abstract_uct}
Let $\mathcal T$ be a triangulated category with arbitrary coproducts and let~$\mathcal G$ be a small set of its compact objects ---  or assume the $\aleph$-relative versions of this hypothesis, as in Remark~\ref{rem:aleph}.
For all objects $A,B\in \mathcal T$, there is a cohomologically indexed right half-plane spectral sequence of the form
\[
E_2^{p,q} = \Ext^{p}_{\mathcal G_*}(h_*A,h_*B)_{-q}
\quad \stackrel{n=p+q}{\Longrightarrow} \quad \mathcal T(A[n],B)
\]
depending functorially on~$A$ and~$B$.

The spectral sequence converges conditionally  \textup(\cite{boardman}\textup) if $A\in \Cell(\mathcal T,\mathcal G)\smash{\stackrel{\textrm{def.}}{=}} \langle \mathcal G\rangle_{\loc}\subseteq \mathcal T$.
We have strong convergence if~$A$ is moreover $\mathcal I^\infty$-projective, that is, if $\mathcal T(A,f)=0$ for every morphism $f$ which can be written, for every~$n\geqslant1$, as a composition of~$n$ morphisms each of which vanishes under~$h_*$. 

If $A$ belongs to $\Cell(\mathcal T,\mathcal G)$ and moreover has an $\mathcal I$-projective resolution of finite length~$m\geqslant1$ (equivalently: if $A\in \Cell(\mathcal T,\mathcal G)$ and~$h_*A$ has a projective resolution in $\Gr\Mod\textrm-\mathcal G_*$ of length~$m$), then the spectral sequence is confined in the region $0\leqslant p \leqslant m$ and therefore collapses at the $(m+1)$-st page $E^{*,*}_{m+1}=E^{*,*}_\infty$.
\end{thm}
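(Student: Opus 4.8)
The statement is the specialisation, to the contravariant represented functor $\mathcal T(\variable,B)$, of Meyer's ABC spectral sequence~\cite{meyer:hom}; so the plan is to exhibit the underlying exact couple, identify its $E_2$-term by means of the results already proved in this section, and deduce the three convergence assertions from Boardman's criteria~\cite{boardman}. First I would build an \emph{$\mathcal I$-projective resolution of~$A$ in~$\mathcal T$} --- a ``phantom tower''
\[
\cdots \longrightarrow N_2 \longrightarrow N_1 \longrightarrow N_0 = A, \qquad\text{with distinguished triangles}\qquad N_{n+1}\to P_n \to N_n \xrightarrow{\,\delta_n\,} N_{n+1}[1],
\]
in which every $P_n$ is $\mathcal I$-projective and every connecting map $\delta_n$ lies in~$\mathcal I$; applying $h_*$ then turns this into an ordinary projective resolution $\cdots\to h_*(P_1)\to h_*(P_0)\to h_*(A)\to 0$ in $\GrMod\textrm- \mathcal G_*$. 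Such a tower exists because~$\mathcal T$ has enough $\mathcal I$-projectives: starting from the partial left adjoint~$\ell$ of~$h_*$ constructed in the proof of Proposition~\ref{prop:univ_exact} (or of its $\aleph$-relative version, Remark~\ref{rem:aleph}), one lifts an epimorphism $P\twoheadrightarrow h_*(A)$ from a projective to a morphism $\ell P\to A$, whose defining triangle then has its connecting map in~$\mathcal I$ and whose source $\ell P$ is $\mathcal I$-projective (being a summand of a coproduct of objects $X[i]$ with $X\in\mathcal G$); iterating gives the tower. Feeding the tower into the cohomological functor $\mathcal T(\variable[\ast],B)$ produces, in the standard way, an exact couple and hence the asserted cohomologically indexed right half-plane spectral sequence abutting to $\mathcal T(A[n],B)$, with the filtration on the abutment induced by the composites $N_n\to\cdots\to N_0=A$; functoriality in~$A$ and~$B$ is visible from the construction.

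Second I would pin down the $E_2$-page. Each $P_n$ being $\mathcal I$-projective, Proposition~\ref{prop:univ_exact} gives that $h_*(P_n)$ is projective, and the graded Yoneda isomorphism of Lemma~\ref{lemma:enriched_Yoneda} supplies a natural identification $\mathcal T(P_n[i],B)\cong \GrMod\textrm- \mathcal G_*\bigl(h_*(P_n)[i],\,h_*(B)\bigr)$, under which the $d_1$-differential of the spectral sequence becomes the differential of the complex $\GrMod\textrm- \mathcal G_*\bigl(h_*(P_\bullet),\,h_*(B)\bigr)$. Taking cohomology and sorting out the internal $\Z/\pi$-grading carried by the suspension variable~$q$ yields $E_2^{p,q}=\Ext^p_{\mathcal G_*}(h_*A,h_*B)_{-q}$. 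This is nothing but the computation of the relative right derived functors $\mathsf R^p_{\mathcal I}\bigl(\mathcal T(\variable,B)\bigr)$ already carried out in Proposition~\ref{prop:comput_rel_der} and Corollary~\ref{cor:rel_ext}, now recorded for every suspension of~$A$ simultaneously.

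Third comes convergence, which I would import from~\cite{meyer:hom} (compare~\cite{christensen:ideals}). When $A\in\Cell(\mathcal T,\mathcal G)=\langle\mathcal G\rangle_\loc$ the phantom tower converges to~$A$ --- this is exactly where the fact that $h_*$ detects vanishing on $\langle\mathcal G\rangle_\loc$ is used --- and one obtains conditional convergence to $\mathcal T(A[n],B)$ in Boardman's sense. Strong convergence holds as soon as the $\varprojlim^1$-type obstructions attached to the tower vanish, which is precisely the hypothesis that~$A$ be $\mathcal I^\infty$-projective, i.e.\ that no $n$-fold composite of $h_*$-null maps survives under $\mathcal T(A,\variable)$ for any~$n\geqslant1$. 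Finally, if $h_*(A)$ admits a projective resolution of length~$m$, then by Proposition~\ref{prop:univ_exact} the phantom tower may be chosen with $N_{m+1}\simeq 0$ and $P_n=0$ for $n>m$; then $E_2^{p,q}=0$ for $p>m$, every differential $d_r$ with $r\geqslant m+1$ vanishes for degree reasons, each group $\mathcal T(A[n],B)$ inherits a finite filtration, and the spectral sequence collapses and converges with $E^{*,*}_{m+1}=E^{*,*}_\infty$.

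The step I expect to be the real obstacle is the convergence analysis: not the construction of the spectral sequence, which given Propositions~\ref{prop:univ_exact} and~\ref{prop:comput_rel_der} is essentially formal, but the careful translation of Boardman's conditional and strong convergence bookkeeping into properties of the phantom tower --- in particular the implications ``$A\in\langle\mathcal G\rangle_\loc$ $\Rightarrow$ the tower converges to~$A$'' and ``$A$ is $\mathcal I^\infty$-projective $\Rightarrow$ the derived-limit obstructions vanish''. A secondary bookkeeping nuisance is keeping the $\aleph_1$-relative hypotheses of Remark~\ref{rem:aleph} in force throughout, so that all the homotopy colimits and limits implicit in building and comparing towers genuinely exist.
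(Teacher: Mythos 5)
Your proposal follows essentially the same route as the paper: the paper simply declares the spectral sequence to \emph{be} the ABC spectral sequence of \cite{meyer:hom} attached to the ideal $\mathcal I=\ker h_*$ and the contravariant functor $\mathcal T(\variable,B)$, identifies the $E_2$-page via Corollary~\ref{cor:rel_ext}, and then cites \cite{meyer:hom}*{Prop.~5.2 and Prop.~4.5} for strong convergence and collapse, and \cite{christensen:ideals}*{Prop.~4.4} for conditional convergence. Your outline unrolls these references by exhibiting the phantom tower and the exact couple explicitly, which is what lies behind them, so there is no genuine difference of method.

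Two small points of comparison. First, you correctly flag convergence as the delicate part, and the paper makes a remark there that your sketch glosses over: Christensen's Proposition 4.4 is stated under the blanket hypothesis that the $\mathcal G$-projectives generate all of $\mathcal T$ (i.e.\ $\Cell(\mathcal T,\mathcal G)=\mathcal T$), but one needs to observe that, for a \emph{fixed} pair $(A,B)$, the proof only requires $A\in\Cell(\mathcal T,\mathcal G)$ (all the stages $N_n$ of the phantom tower then lie in $\Cell(\mathcal T,\mathcal G)$, which is what makes the argument run), and imposes no condition on~$B$. Without this remark the cited result does not literally apply, so you should record it. Second, your claim that a length-$m$ projective resolution of $h_*A$ allows one to choose the tower with $N_{m+1}\simeq 0$ is only correct under the standing hypothesis that $A\in\Cell(\mathcal T,\mathcal G)$ (one first deduces $h_*(N_{m+1})=0$, then uses that $h_*$ detects vanishing inside $\Cell(\mathcal T,\mathcal G)$); worth stating explicitly so the reader does not think it holds unconditionally.
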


\begin{proof}
We define our spectral sequence to be the ABC spectral sequence of~\cite{meyer:hom} associated to the triangulated category $\mathcal T$, its homological ideal $\mathcal I= \ker h_*$, the contravariant homological functor $F=\mathcal T(\variable,B)$ into abelian groups, and the object~$A\in \mathcal T$; the hypotheses that $\mathcal T$ has countable coproducts, that $\mathcal I$ is closed under them, and that~$F$ sends them to products, are all satisfied. 
By \cite{meyer:hom}*{Theorem~4.3}, the ABC spectral sequence is (from the second page onwards) functorial in~$A$, and ours is clearly also functorial in~$B$ by construction. 
Moreover, its second page contains the groups $E^{p,q}_2=\mathsf R_\mathcal I^pF^q(A)$, which take the required form by Corollary~\ref{cor:rel_ext}.

The criterion for strong convergence is proved in \cite{meyer:hom}*{Proposition~5.2} (where, in the notation of \emph{loc.\,cit.}, $A=LA$ and $\mathbb R F = F$ because $A\in \Cell(\mathcal T,\mathcal G)$), and the criterion for collapse is part of \cite{meyer:hom}*{Proposition~4.5}.   

Conditional convergence is proved as in \cite{christensen:ideals}*{Proposition~4.4}. The hypothesis of \emph{loc.\,cit.\ }is that $\mathcal G$-projective objects generate, \emph{i.e.}, that $\Cell(\mathcal T,\mathcal G)=\mathcal T$. However, for fixed $A$ and~$B$, the argument only uses that $A\in \Cell(\mathcal T,\mathcal G)$, not~$B$: this still implies that $X_k\in \Cell(\mathcal T,\mathcal G)$ for all the stages of the Adams resolution \cite{christensen:ideals}*{(4.1)}, \emph{i.e.}, of the phantom tower \cite{meyer:hom}*{(3.1)}, and the conclusion follows exactly with the same proof.
\end{proof}

Specializing Theorem~\ref{thm:abstract_uct} to Example~\ref{example:KKG}, we obtain the first of the results promised in the Introduction.

\begin{thm}
\label{thm:UCT}
Let $G$ be a finite group. 
For every $A,B\in \KK^G$ such that $A$ is $G$-cell algebra, and depending functorially on them, there exists a cohomologically indexed, right half plane, conditionally convergent spectral sequence 
\[
E^{p,q}_2 = \Ext^p_{R^G}( k^G_*A , k^G_*B )_{-q} 
\quad
\stackrel{n= p + q}{\Longrightarrow} 
\quad
\KK^G_n(A, B)
\]
which converges strongly or collapses under the same hypothesis of Theorem~\ref{thm:abstract_uct}.
\end{thm}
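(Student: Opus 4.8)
The plan is to obtain Theorem~\ref{thm:UCT} as the specialization of the abstract Theorem~\ref{thm:abstract_uct} to the setting of Example~\ref{example:KKG}, with $\mathcal T=\KK^G$ and $\mathcal G=\{C(G/H)\mid H\leqslant G\}$. Since $\KK^G$ only has countable coproducts, I would work with the $\aleph$-relative version of Theorem~\ref{thm:abstract_uct} supplied by Remark~\ref{rem:aleph}, taking $\aleph=\aleph_1$; its hypotheses are met because, by Proposition~\ref{prop:cptly_gen}, each $C(G/H)$ is a compact${}_{\aleph_1}$ object of $\KK^G$, hence small${}_{\aleph_1}$ in the sense of that remark. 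Because of Bott periodicity the shift functor of $\KK^G$ has periodicity $\pi=2$, so all gradings are over $\Z/2$.

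The second step is to translate the output of the abstract theorem into the asserted concrete form using the identifications built in Sections~\ref{sec:mackey_green}--\ref{sec:K_mack}. As noted in Example~\ref{example:KKG}, since the graded category $\mathcal G_*=\mathcal G$ has morphisms only in degree zero, $\GrMod\textrm-\mathcal G_*$ is the category of $\Z/2$-graded objects of $\Mod\textrm-\mathcal G$, and Corollary~\ref{cor:ident_mackey_modules} identifies $\Mod\textrm-\mathcal G$ with $R^G\textrm-\Mack$; hence $\GrMod\textrm-\mathcal G_*\simeq R^G\textrm-\Mack_{\Z/2}$. Under this equivalence the restricted Yoneda functor $h_*$ corresponds to the K-theory Mackey module functor $k^G_*$ --- which is exactly what Theorem~\ref{thm:burnside_bouc_R} (resting in turn on the full faithfulness Theorem~\ref{thm:fully_faithful}) says, since it matches the restriction of $k^G$ to $\mathsf{perm}^G$ with the Yoneda embedding $\mathcal B_{R^G}\hookrightarrow R^G\textrm-\Mack$. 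Consequently $\Ext^p_{\mathcal G_*}(h_*A,h_*B)_{-q}=\Ext^p_{R^G}(k^G_*A,k^G_*B)_{-q}$, and $\Cell(\mathcal T,\mathcal G)=\langle\mathcal G\rangle_{\loc}=\Cell^G$, so the abstract spectral sequence becomes verbatim the one in the statement, abutting to $\KK^G_n(A,B)$ and functorial in $A$ and $B$.

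For the convergence claims I would simply quote the corresponding parts of Theorem~\ref{thm:abstract_uct}: conditional convergence as soon as $A\in\Cell^G$; strong convergence if $A$ is in addition $\mathcal I^\infty$-projective, which here reads ``$\KK^G(A,f)=0$ for every $f$ that is, for each $n\geqslant1$, an $n$-fold composite of maps vanishing under $k^G_*$''; and collapse at $E_{m+1}$ when $A\in\Cell^G$ and $k^G_*A$ has a projective resolution of finite length $m$ in $R^G\textrm-\Mack_{\Z/2}$ --- the translation between $\mathcal I$-projective resolutions of $A$ and projective resolutions of $h_*A=k^G_*A$ being provided by Proposition~\ref{prop:univ_exact}.

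I do not expect a genuine obstacle here: the substantive work has already been done in establishing Theorems~\ref{thm:fully_faithful} and~\ref{thm:burnside_bouc_R} and in setting up the relative homological machinery. The only point demanding a little care is the $\aleph_1$ bookkeeping --- checking that $\KK^G$ together with $\mathcal G$ really satisfies the hypotheses of Remark~\ref{rem:aleph} and that ``small${}_{\aleph_1}$ graded $\mathcal G_*$-modules'' is the appropriate category, which is harmless since $k^G_*(A)$ takes countable values for separable $A$ (as observed in the proof of Proposition~\ref{prop:cptly_gen}).
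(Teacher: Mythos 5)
Your proposal is correct and follows essentially the same route as the paper: specialize Theorem~\ref{thm:abstract_uct} via Remark~\ref{rem:aleph} with $\aleph=\aleph_1$ to $\mathcal T=\KK^G$ and $\mathcal G=\{C(G/H)\}$, invoking Proposition~\ref{prop:cptly_gen} for $\aleph_1$-compactness and identifying the restricted Yoneda functor $h_*$ with $k^G_*$ via Example~\ref{example:KKG} and Corollary~\ref{cor:ident_mackey_modules}. You merely spell out more explicitly the chain of identifications (through Theorems~\ref{thm:fully_faithful} and~\ref{thm:burnside_bouc_R}) that the paper's proof compresses into the single sentence ``the universal $\mathcal G$-exact stable homological functor of Proposition~\ref{prop:univ_exact} is just our $k^G_*$.''
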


\begin{proof}
Since $\KK^G$ only has countable coproducts, we adopt the hypotheses of Remark~\ref{rem:aleph} with~$\aleph:=\aleph_1$. Note that the generators $\mathcal G=\{C(G/H)\mid H\leqslant G\}$ are compact${}_{\aleph_1}$ by Proposition~\ref{prop:cptly_gen}.
The universal $\mathcal G$-exact stable homological functor of Proposition~\ref{prop:univ_exact} is just our $\smash{ k^G_*: \KK^G \to R^G\textrm-\Mack_{\Z/2}^{\aleph_1} }$ (where the ``$\aleph_1$'' indicates that we must restrict attention to countable modules),
and the rest follows.
\end{proof}

\subsection{The K\"unneth spectral sequence}
We have a fairly good idea of what should be the most appropriate level of abstraction for proving a nice general K\"unneth spectral sequence, similar to the general universal coefficient spectral sequence of~\S\ref{subsec:general}. But this would involve inflicting on the reader more abstract nonsense than might be decently included in this article, and we therefore reserve such thoughts for a different place and a future time.

For Kasparov theory, in any case, we have the following.

\begin{thm}
\label{thm:KT}
Let $G$ be a finite group.
For all separable $G$-$\Cstar$-algebras~$A$ and~$B$, there is a homologically indexed right half-plane spectral sequence of the form
\[
E^2_{p,q} = \Tor_{p}^{\mathcal G_*}(k^G_*A,k^G_*B)_{q}
\quad \stackrel{n=p+q}{\Longrightarrow} \quad K^G_n(A\otimes B)
\]
which is strongly convergent whenever $A\in \Cell^G$.  
\end{thm}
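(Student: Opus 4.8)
The plan is to carry out, in the homological direction, the construction underlying Theorem~\ref{thm:abstract_uct}: use the \emph{same} $\mathcal{I}$-projective resolutions, but now tensor them with~$B$ instead of mapping out of them into~$B$.

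Fix $A\in\Cell^G$ and an arbitrary separable~$B$, and (working with the $\aleph_1$-relative version of the theory, as in Remark~\ref{rem:aleph}) choose an $\mathcal{I}$-projective resolution $P_\bullet\to A$ of~$A$ in~$\KK^G$, where $\mathcal{I}=\ker k^G_*$. By Proposition~\ref{prop:univ_exact}, each $P_p$ is a countable coproduct $\coprod_i C(G/H^p_i)[\epsilon^p_i]$ of suspended orbit algebras, and $k^G_*(P_\bullet)\to k^G_*(A)$ is a projective resolution in $R^G\textrm-\Mack_{\Z/2}$. Because $A$ lies in $\Cell^G=\langle\mathcal G\rangle_\loc$, this resolution assembles into a cellular tower $0=A^{(-1)}\to A^{(0)}\to A^{(1)}\to\cdots$ with $A\cong\hocolim_p A^{(p)}$ and $\cone(A^{(p-1)}\to A^{(p)})\simeq P_p[p]$ (the phantom tower of~\cite{meyer:hom}*{(3.1)}). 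Since $\variable\otimes B\colon\KK^G\to\KK^G$ is triangulated and preserves countable coproducts, it preserves this homotopy colimit: $A\otimes B\cong\hocolim_p(A^{(p)}\otimes B)$, with layers $\cone\big((A^{(p-1)}\otimes B)\to(A^{(p)}\otimes B)\big)\simeq(P_p\otimes B)[p]$.

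Applying the stable homological functor $k^G_*$ to this filtered object yields an exact couple, hence a homologically indexed right half-plane spectral sequence of $R^G$-Mackey modules (with $q\in\Z/2$), whose first page is $E^1_{p,q}=k^G_q(P_p\otimes B)$ and which abuts to $k^G_{p+q}(A\otimes B)$. To identify the second page I would first treat a single orbit: by the Frobenius isomorphism~\eqref{frob:Cstar}, $C(G/H)\otimes B\cong\Ind{H}{G}\Res{G}{H}(B)$ in~$\KK^G$, so by Lemma~\ref{lemma:Ind_k}
\[
k^G_*\big(C(G/H)\otimes B\big)\;\cong\;\Ind{H}{G}\Res{G}{H}\big(k^G_*B\big)\;\cong\;R^G_{G/H}\boxpr_{R^G}k^G_*B\,,
\]
the last isomorphism being Proposition~\ref{prop:higher_frob} applied with $G'=H$ and $M=R^H=\Res{G}{H}(R^G)$, together with the identification $\Ind{H}{G}(R^H)=\Ind{H}{G}k^H(\unit)=k^G(\Ind{H}{G}\unit)=k^G(C(G/H))=R^G_{G/H}$. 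Since $k^G_*$, $\variable\otimes B$ and $\boxpr_{R^G}$ all commute with coproducts and suspensions, this extends to a natural isomorphism $k^G_*(P_p\otimes B)\cong k^G_*(P_p)\boxpr_{R^G}k^G_*(B)$, under which the differential $d^1$ (induced by the resolution maps $P_p\to P_{p-1}$) becomes $k^G_*(d_\bullet)\boxpr_{R^G}\id$. Hence $E^2_{p,q}$ is the degree-$q$ part of the $p$-th homology of $k^G_*(P_\bullet)\boxpr_{R^G}k^G_*(B)$. Finally, the projective generators $R^G_{G/H}$ are flat, because $R^G_{G/H}\boxpr_{R^G}\variable\cong\Ind{H}{G}\Res{G}{H}(\variable)$ and both $\Ind{H}{G}$ and $\Res{G}{H}$ are exact, being left and right adjoint to one another (cf.~\S\ref{subsec:ind_res_mac}); consequently $k^G_*(P_\bullet)$ is a flat resolution of $k^G_*A$ and $E^2_{p,q}=\Tor^{R^G}_p(k^G_*A,k^G_*B)_q$. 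Evaluating the whole spectral sequence at $G/G$ gives the one in the statement, abutting to $k^G_*(A\otimes B)(G/G)=K^G_n(A\otimes B)$.

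For strong convergence, note that $k^G_*$ sends the mapping telescope $\hocolim_p(A^{(p)}\otimes B)$ to $\colim_p k^G_*(A^{(p)}\otimes B)$, since the relevant $\colim^1$-term vanishes (sequential colimits are exact in the Grothendieck category $R^G\textrm-\Mack_{\Z/2}$). Thus the filtration $F_pK^G_n(A\otimes B):=\image\big(K^G_n(A^{(p)}\otimes B)\to K^G_n(A\otimes B)\big)$ is exhaustive and satisfies $F_{-1}=0$, so the spectral sequence converges strongly (compare \cite{meyer:hom}*{Prop.~5.2} and~\cite{boardman}); the key contrast with the universal coefficient spectral sequence is that here one meets the exact functor $\colim$ rather than $\lim$ with its derived $\lim^1$. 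If moreover $k^G_*A$ has a projective resolution of finite length~$m$, one may choose $P_\bullet$ with $P_p=0$ for $p>m$, so that $E^1_{p,*}=0$ for $p>m$ and the spectral sequence collapses at $E^{m+1}=E^\infty$. The step I expect to be the main obstacle is the identification of the $d^1$-differential above: a priori this differential is described only through the connecting morphisms of the geometric tower and the functor~$k^G_*$, and one must check that, under the isomorphisms built from~\eqref{frob:Cstar}, Lemma~\ref{lemma:Ind_k} and Proposition~\ref{prop:higher_frob}, it coincides \emph{on the nose} with $k^G_*(d_\bullet)\boxpr_{R^G}\id$ --- a coherence check resting on the naturality of the Frobenius isomorphism and of the monoidal constraint of~$k^G$ along the resolution maps $P_p\to P_{p-1}$. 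Everything else in the argument is either formal or already recorded in the excerpt.
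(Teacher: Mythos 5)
Your proof is correct and follows essentially the same route as the paper: the paper defines the spectral sequence as Meyer's ABC spectral sequence for the ideal $\mathcal I=\ker k^G_*$ and the homological functor $k^G_*(\variable\otimes B)$, identifying the $E^2$-page via Lemma~\ref{lemma:iso_proj} (whose proof uses exactly the chain of isomorphisms you wrote — Frobenius~\eqref{frob:Cstar}, Lemma~\ref{lemma:Ind_k}, Proposition~\ref{prop:higher_frob}) and Proposition~\ref{prop:comput_rel_tor}, and cites \cite{meyer:hom}*{Theorem~5.1} for strong convergence. What you do differently is unpack the ABC machinery explicitly (phantom tower, exact couple, colimit argument for convergence) rather than cite it as a black box; this also makes the $d^1$-identification worry you flag moot in the paper's treatment, since Proposition~\ref{prop:comput_rel_tor} avoids it by invoking the uniqueness clause of \cite{meyernest_hom}*{Theorem~59} to match the right-exact functor $(\variable)\boxpr_{R^G}k^G_*B$ with the abstractly produced one.
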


The key is to show that the generators in~$\mathcal G$ are sufficiently nice with respect to the universal $\mathcal I$-exact functor, so that we may correctly identify the second page.  

\begin{lemma}
\label{lemma:iso_proj}
For every $H\leqslant G$, there is an isomorphism of graded $R^G$-modules
\[
\varphi_H\colon k^G_*(C(G/H)\otimes B) \cong  k^G_*(C(G/H)) \boxpr_{R^G} k^G_*(B)
\]
natural in $B\in \Cstaralg^G$.
\end{lemma}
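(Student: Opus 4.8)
The plan is to show that both sides of the asserted isomorphism are naturally isomorphic to the graded $R^G$-Mackey module $\Ind{H}{G}\Res{G}{H}(k^G_*B)$, by chaining together the Frobenius isomorphism at the $\Cstar$-algebra level, the compatibility of $k^G_*$ with induction and restriction established in Lemma~\ref{lemma:Ind_k}, and the higher Frobenius isomorphism for Mackey modules of Proposition~\ref{prop:higher_frob}.

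First I would treat the left-hand side. Writing $C(G/H)=\Ind{H}{G}(\unit^H)$ and invoking the Frobenius isomorphism~\eqref{frob:Cstar} together with $\unit^H\otimes\Res{G}{H}B\cong\Res{G}{H}B$, one obtains a natural isomorphism of $G$-$\Cstar$-algebras $C(G/H)\otimes B\cong\Ind{H}{G}\Res{G}{H}B$. Applying $k^G_*$ and then using the induction square of Lemma~\ref{lemma:Ind_k} followed by its restriction square (the latter applied to the $H$-algebra $\Res{G}{H}B$) yields natural isomorphisms of graded $R^G$-modules
\[
k^G_*\bigl(C(G/H)\otimes B\bigr)\;\cong\;k^G_*\bigl(\Ind{H}{G}\Res{G}{H}B\bigr)\;\cong\;\Ind{H}{G}\bigl(k^H_*\Res{G}{H}B\bigr)\;\cong\;\Ind{H}{G}\Res{G}{H}\bigl(k^G_*B\bigr).
\]
For the right-hand side I would first note, again by Lemma~\ref{lemma:Ind_k} and the identity $k^H_*(\unit^H)=R^H$ from Proposition~\ref{prop:mackey_Kth}, that $k^G_*(C(G/H))=k^G_*(\Ind{H}{G}\unit^H)\cong\Ind{H}{G}(R^H)$, where $R^H=\Res{G}{H}(R^G)$ as in~\S\ref{subsec:ind_res_mac}. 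Then Proposition~\ref{prop:higher_frob}, applied with $G'=H$, $M=R^H$ and $N=k^G_*B$, gives
\[
k^G_*(C(G/H))\boxpr_{R^G}k^G_*B\;\cong\;\Ind{H}{G}(R^H)\boxpr_{R^G}k^G_*B\;\cong\;\Ind{H}{G}\bigl(R^H\boxpr_{R^H}\Res{G}{H}(k^G_*B)\bigr)\;\cong\;\Ind{H}{G}\Res{G}{H}(k^G_*B),
\]
the last isomorphism because $R^H$ is the $\boxpr_{R^H}$-unit of $R^H\textrm-\Mack$. Composing the two chains produces the desired isomorphism $\varphi_H$.

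The remaining point is to check that $\varphi_H$ is natural in $B\in\Cstaralg^G$ and is an isomorphism of graded $R^G$-modules; I do not expect this to present any real difficulty. Naturality in $B$ follows because every building block is natural — Frobenius~\eqref{frob:Cstar} and the two squares of Lemma~\ref{lemma:Ind_k} are isomorphisms of functors, and Proposition~\ref{prop:higher_frob} is natural in its module argument~$N$ — and every building block is an isomorphism of graded $R^G$-modules (respectively of $R^H$-modules in the appropriate slots). The $\Z/2$-graded version of Proposition~\ref{prop:higher_frob} used here follows by applying the ungraded statement componentwise, which is especially transparent since $k^G_*(C(G/H))$ is concentrated in degree~$0$. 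Thus the only thing that requires genuine care is the bookkeeping of the $R^G$-module and grading structures along the chain, together with the harmless verification that the Frobenius maps at the $\Cstar$-, $\KK$- and Mackey-module levels are the mutually compatible ones.
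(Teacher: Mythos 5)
Your proposal is correct and follows essentially the same route as the paper: both arguments identify each side with $\Ind{H}{G}\Res{G}{H}(k^G_*B)$ by combining the $\Cstar$-level Frobenius isomorphism~\eqref{frob:Cstar}, the compatibility of $k^G_*$ with induction and restriction from Lemma~\ref{lemma:Ind_k}, and the Mackey-module Frobenius isomorphism of Proposition~\ref{prop:higher_frob}. The paper simply packages the same chain into a single commutative diagram whose rows and columns are these isomorphisms.
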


\begin{proof}
We define $\varphi_H$ by the following commutative diagram.
\begin{equation*}
\xymatrix{
k^G_*(C(G/H)\otimes B) \ar@{=}[d] \ar@{..>}[r]^-{\varphi_{H}} &
 k^G_*(C(G/H)) \boxpr_{\!R^G}\, k^G_*(B) \ar[d]_{\cong}^{\textrm{Lemma~\ref{lemma:Ind_k}} } \\
k^G_*(\Ind{H}{G}(\unit^H)\otimes B) \ar[d]_{\textrm{Frobenius \eqref{frob:Cstar}}}^{\cong} &
 \Ind{G}{H}(R^H) \boxpr_{\!R^G}\, k^G_*(B) \ar[d]_{\cong}^{\textrm{Frobenius  \ref{prop:higher_frob} }} \\
k^G_* (\Ind{H}{G} ( \unit^H \otimes \Res{G}{H}(B))) \ar[d]^{\cong} &
 \Ind{H}{G}(R^H \boxpr_{\!R^H}\, \Res{G}{H}(k^G_*(B))) \ar[d]_{\cong} \\
k^G_*(\Ind{H}{G}\Res{G}{H}((B))) \ar[r]_-{\cong}^-{\textrm{Lemma \ref{lemma:Ind_k}}}&
 \Ind{H}{G}\Res{G}{H}(k^G_*(B))
}
\end{equation*}
Because each isomorphism is natural in~$B$, their composition is too.
\end{proof}

\begin{prop}
\label{prop:comput_rel_tor}
For the stable homological functor 
$
F_*:= k^G_*( \variable \otimes B) 
\colon \KK^G \to \Ab_{\Z/2}
$, 
there are canonical isomorphisms
\[
\mathsf L^{\mathcal I}_nF_* \cong \Tor_n^{R^G}(k^G_*(\variable), k^G_*(B))_*  \qquad (A \in \KK^G , n\in \Z )
\]
of functors $\KK^G\to \Ab_{\Z/2}$
between its $\mathcal I$-relative left derived functors and the left derived functors of the tensor product $\boxpr_{R^G}$ of graded $R^G$-Mackey modules. 
Moreover, if we equip both sides with the naturally induced $R^G$-action, these isomorphisms become isomorphisms of functors $\KK^G \to R^G\textrm- \Mack_{\Z/2}$.
\end{prop}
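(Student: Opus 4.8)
The plan is to compute both sides of the asserted isomorphism from one and the same $\mathcal I$-projective resolution, exactly as in the covariant version of the proof of Proposition~\ref{prop:comput_rel_der}, feeding in Lemma~\ref{lemma:iso_proj}. First I would record that $F_*=k^G_*(\variable\otimes B)$ is additive, stable homological and preserves countable coproducts — inherited from the triangulated, coproduct-preserving functor $\variable\otimes B\colon\KK^G\to\KK^G$ and from $k^G_*$ — so that $\mathsf L^{\mathcal I}_n F_*$ is defined and may be computed from any $\mathcal I$-projective resolution. Fix $A\in\KK^G$ and one such resolution $P_\bullet\to A$; I use throughout the $\aleph_1$-relative version of Proposition~\ref{prop:univ_exact} as in Remark~\ref{rem:aleph}, which applies because $k^G_*(B)$ is countable ($B$ being separable). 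By Proposition~\ref{prop:univ_exact}, $k^G_*(P_\bullet)\to k^G_*(A)$ is a projective resolution of $k^G_*(A)$ in $R^G\textrm-\Mack_{\Z/2}$, so that $\Tor^{R^G}_n(k^G_*(A),k^G_*(B))_*$ is the $n$-th homology of $k^G_*(P_\bullet)\boxpr_{R^G}k^G_*(B)$; whereas $\mathsf L^{\mathcal I}_n F_*(A)$ is the $n$-th homology of $F_*(P_\bullet)=k^G_*(P_\bullet\otimes B)$. It therefore suffices to construct, naturally in $A$, an isomorphism of chain complexes $k^G_*(P_\bullet\otimes B)\cong k^G_*(P_\bullet)\boxpr_{R^G}k^G_*(B)$ which is moreover degreewise an isomorphism of $R^G$-modules: taking homology then delivers both the unstructured and the $R^G$-linear form of the Proposition.

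Such a chain isomorphism is built from Lemma~\ref{lemma:iso_proj}. On a generator $C(G/H)[i]$, the isomorphism $\varphi_H$ of that lemma (shifted by $[i]$) identifies $k^G_*(C(G/H)[i]\otimes B)$ with $k^G_*(C(G/H)[i])\boxpr_{R^G}k^G_*(B)$ as graded $R^G$-modules. Since each $P_n$ is a retract of a countable coproduct of such generators and since both functors $k^G_*(\variable\otimes B)$ and $k^G_*(\variable)\boxpr_{R^G}k^G_*(B)$ are additive and preserve countable coproducts, these $\varphi_H$ assemble — \emph{provided they are natural with respect to arbitrary morphisms of $\KK^G$ between the $C(G/H)$, which is the one point addressed below} — into an isomorphism of complexes, $R^G$-linear in each degree. (If one prefers, one can equivalently package this as: the functor $\variable\boxpr_{R^G}k^G_*(B)$ is the unique right-exact functor $\overline{F_*}$ on $R^G\textrm-\Mack_{\Z/2}$ with $\overline{F_*}\circ k^G_*\cong F_*$ on $\mathcal I$-projectives, and then invoke the covariant counterpart of the reasoning around~\eqref{ident_contrav_rel_der}.)

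I expect the main obstacle to be precisely the naturality just flagged: Lemma~\ref{lemma:iso_proj} states that $\varphi_H$ is natural in $B$, but here I also need it to be natural in the orbit $G/H$ — that is, compatible with \emph{all} morphisms $\KK^G(C(G/H),C(G/L))$, which, unlike maps of $G$-sets, include transfers. Equivalently, I must check that $\varphi$ is a natural isomorphism between the two functors $\mathsf{perm}^G\to R^G\textrm-\Mack_{\Z/2}$ carrying $C(X)$ to $k^G_*(C(X)\otimes B)$ and to $k^G_*(C(X))\boxpr_{R^G}k^G_*(B)$ (the domain being the full subcategory of $\KK^G$, so that this captures both the retract-of-coproduct decomposition of each $P_n$ and the differentials of $P_\bullet$). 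This I would do by a diagram chase, following a morphism $C(G/H)\to C(G/L)$ through the chain of isomorphisms that defines $\varphi_H$ in the proof of Lemma~\ref{lemma:iso_proj} — the Frobenius isomorphism~\eqref{frob:Cstar} of Kasparov theory, the Frobenius isomorphism of Proposition~\ref{prop:higher_frob}, the two-sided $\Ind{H}{G}\textrm-\Res{G}{H}$ adjunctions, and Lemma~\ref{lemma:Ind_k} — each of which is natural in every variable entering it; it is somewhat lengthy but involves no genuine difficulty. Once this is in place, the chain isomorphism of the first paragraph exists and is natural in $A$, and passing to homology gives the asserted natural isomorphism $\mathsf L^{\mathcal I}_n F_*\cong\Tor^{R^G}_n(k^G_*(\variable),k^G_*(B))_*$ of functors into $R^G\textrm-\Mack_{\Z/2}$, hence \emph{a fortiori} into $\Ab_{\Z/2}$.
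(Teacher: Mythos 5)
Your proposal is essentially the paper's own proof, unwound to the chain-complex level. The paper invokes Meyer--Nest Theorem~59 to produce the unique right-exact functor $\overline{F_*}$ on $R^G\textrm-\Mack_{\Z/2}$ agreeing with $F_*$ on $\mathcal I$-projectives, and then uses Lemma~\ref{lemma:iso_proj} to identify $\overline{F_*}$ with $(\variable)\boxpr_{R^G}k^G_*(B)$; you instead carry out the resulting computation explicitly by choosing an $\mathcal I$-projective resolution $P_\bullet\to A$ and comparing $k^G_*(P_\bullet\otimes B)$ with $k^G_*(P_\bullet)\boxpr_{R^G}k^G_*(B)$ degreewise, which is the same argument made concrete. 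What you get right beyond the paper's phrasing is the one genuine subtlety: the identification $\overline{F_*}\cong(\variable)\boxpr_{R^G}k^G_*(B)$ (or equivalently, the assembly of the $\varphi_H$ into a chain isomorphism compatible with the differentials of $P_\bullet$ and the retracts exhibiting each $P_n$) requires the $\varphi_H$ to be natural in the permutation-algebra variable --- that is, with respect to all morphisms in $\KK^G(C(G/H),C(G/L))$, not merely those coming from $G$-maps. Lemma~\ref{lemma:iso_proj} only records naturality in~$B$, and the paper's proof says only that the two functors ``agree on $\mathcal I$-projective objects,'' which on a literal reading is objectwise agreement and is not, by itself, enough to invoke the uniqueness in Theorem~59. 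Your proposed fix --- tracing a general morphism through the chain of isomorphisms in Lemma~\ref{lemma:iso_proj} and using the naturality in all variables of~\eqref{frob:Cstar}, Proposition~\ref{prop:higher_frob}, the two-sided $(\Ind{}{},\Res{}{})$-adjunctions, and Lemma~\ref{lemma:Ind_k} --- is the correct way to close this, and once it is carried out the rest of the argument goes through as you describe.
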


\begin{proof}
The proof is quite similar to that of Proposition~\ref{prop:comput_rel_der}, but let us go again through the motions.
By \cite{meyernest_hom}*{Theorem~59}, there exists (up to canonical isomorphism) a unique right exact functor 
$\overline{F_*}\colon R^G\textrm- \Mack_{\Z/2}\to \Ab$ such that  $\overline{F_*}\circ k^G_*(P)= F_*(P)$ for all $\mathcal I$-projective objects $P\in \KK^G$. Moreover, it follows that there are isomorphisms 
\begin{align}
\label{two_der_functors}
\mathsf L_n^\mathcal I F_* \cong \mathsf L_n\overline{F_*} \circ k^G_* 
\qquad (n\geqslant 0) .
\end{align}
Because of the isomorphisms $\varphi_H$ of Lemma~\ref{lemma:iso_proj}, and the consequent isomorphisms
\[
\varphi_H [1] \; \colon \;
k^G_*(C(G/H)[1]\otimes B) \cong k^G_*(C(G/H))[1]  \boxpr_{ R^G} k^G_*(B),
\]
we see that $k^G_*(\variable) \boxpr_{R^G}  k^G_*(B)$ agrees  with~$F_*$ on $\mathcal I$-projective objects. 
Therefore, since $(\variable ) \boxpr_{ R^G} k^G_*(B)$ is right exact, we can further identify 
 $\overline{F_*}= (\variable) \boxpr_{R^G} k^G_*(B)$.
Hence by taking left derived functors we deduce that the identification~\eqref{two_der_functors} takes the form of the claimed isomorphisms.
The last claim of the proposition is clear from the naturality of the isomorphisms. 
\end{proof}

\begin{proof}[Proof of Theorem~\ref{thm:KT}]
We define the K\"unneth spectral sequence to be the ABC spectral sequence of~\cite{meyer:hom} associated with the triangulated category $\mathcal T= \KK^G$, the homological ideal~$\smash{\mathcal I=\bigcap_{H\leqslant G} \ker(K^H_*\circ \Res{G}{H}) }$, the object~$A$, and the covariant homological functor $F= k^G( \variable \otimes B): \KK^G\to \Ab$; the hypotheses that $\mathcal T$ has countable coproducts, that $\mathcal I$ is closed under them and that $F$ preserves them are all satisfied.
The description of the second page follows from \cite{meyer:hom}*{Theorem~4.3} and the computation in Proposition~\ref{prop:comput_rel_tor}. The strong convergence to $F_n(A)=K^G_n(A\otimes B)$ follows from \cite{meyer:hom}*{Theorem~5.1} together with the hypothesis that~$A$ belongs to $\Cell^G$, namely, to the localizing subcategory generated by the $\mathcal I$-projective objects, which implies that $F(A[n])=\mathbb LF(A[n])$ in the notation of \emph{loc.\,cit}.
(Note that, contrary to the case of a contravariant homological functor, we do not need extra conditions on~$A$ for strong convergence.)
\end{proof}

\begin{bibdiv}
\begin{biblist}

\bib{beligiannis:relative}{article}{
   author={Beligiannis, Apostolos},
   title={Relative homological algebra and purity in triangulated
   categories},
   journal={J. Algebra},
   volume={227},
   date={2000},
   number={1},
   pages={268--361},
   issn={0021-8693},
}

\bib{bensonI}{book}{
   author={Benson, D. J.},
   title={Representations and cohomology. I},
   series={Cambridge Studies in Advanced Mathematics},
   volume={30},
   edition={2},
   note={Basic representation theory of finite groups and associative
   algebras},
   publisher={Cambridge University Press},
   place={Cambridge},
   date={1998},
   pages={xii+246},
   isbn={0-521-63653-1},
}

\bib{boardman}{article}{
   author={Boardman, J. Michael},
   title={Conditionally convergent spectral sequences},
   conference={
      title={Homotopy invariant algebraic structures},
      address={Baltimore, MD},
      date={1998},
   },
   book={
      series={Contemp. Math.},
      volume={239},
      publisher={Amer. Math. Soc.},
      place={Providence, RI},
   },
   date={1999},
   pages={49--84},
}

\bib{bokstedt_neeman}{article}{
   author={B{\"o}kstedt, Marcel},
   author={Neeman, Amnon},
   title={Homotopy limits in triangulated categories},
   journal={Compositio Math.},
   volume={86},
   date={1993},
   number={2},
   pages={209--234},
   issn={0010-437X},
}

\bib{bouc:green_functors}{book}{
   author={Bouc, Serge},
   title={Green functors and $G$-sets},
   series={Lecture Notes in Mathematics},
   volume={1671},
   publisher={Springer-Verlag},
   place={Berlin},
   date={1997},
   pages={viii+342},
   isbn={3-540-63550-5},
}

\bib{christensen:ideals}{article}{
   author={Christensen, J. Daniel},
   title={Ideals in triangulated categories: phantoms, ghosts and skeleta},
   journal={Adv. Math.},
   volume={136},
   date={1998},
   number={2},
   pages={284--339},
   issn={0001-8708},
}

\bib{day}{article}{
   author={Day, Brian},
   title={On closed categories of functors},
   conference={
      title={Reports of the Midwest Category Seminar, IV},
   },
   book={
      series={Lecture Notes in Mathematics, Vol. 137},
      publisher={Springer},
      place={Berlin},
   },
   date={1970},
   pages={1--38},
}

\bib{kkGarticle}{article}{
   author={Dell'Ambrogio, Ivo},
   title={Tensor triangular geometry and KK-theory},
   journal={J. Homotopy Relat. Struct.},
   volume={5},
   date={2010},
   number={1},
   pages={319-358},
   issn={},
}

\bib{dress:contributions}{article}{
   author={Dress, Andreas W. M.},
   title={Contributions to the theory of induced representations},
   conference={
      title={Algebraic $K$-theory, II: ``Classical'' algebraic $K$-theory
      and connections with arithmetic (Proc. Conf., Battelle Memorial Inst.,
      Seattle, Wash., 1972)},
   },
   book={
      publisher={Springer},
      place={Berlin},
   },
   date={1973},
   pages={183--240. Lecture Notes in Math., Vol. 342},
}

\bib{nccw}{article}{
   author={Eilers, S{\o}ren},
   author={Loring, Terry A.},
   author={Pedersen, Gert K.},
   title={Stability of anticommutation relations: an application of
   noncommutative CW complexes},
   journal={J. Reine Angew. Math.},
   volume={499},
   date={1998},
   pages={101--143},
   issn={0075-4102},
}

\bib{emerson-meyer:corr}{article}{
   author={Emerson, Heath},
   author={Meyer, Ralf},
   title={Bivariant K-theory via correspondences},
   journal={Adv. Math.},
   volume={225},
   date={2010},
   number={5},
   pages={2883--2919},
   issn={0001-8708},
}

\bib{kasparov_first}{article}{
   author={Kasparov, G. G.},
   title={The operator $K$-functor and extensions of $C^{\ast} $-algebras},
   language={Russian},
   journal={Izv. Akad. Nauk SSSR Ser. Mat.},
   volume={44},
   date={1980},
   number={3},
   pages={571--636, 719},
}

\bib{keller-vossieck:sous}{article}{
   author={Keller, Bernhard},
   author={Vossieck, Dieter},
   title={Sous les cat\'egories d\'eriv\'ees},
   language={French, with English summary},
   journal={C. R. Acad. Sci. Paris S\'er. I Math.},
   volume={305},
   date={1987},
   number={6},
   pages={225--228},
}

\bib{kelly:enriched_book}{article}{
   author={Kelly, G. M.},
   title={Basic concepts of enriched category theory},
   note={Reprint of the 1982 original [Cambridge Univ. Press, Cambridge;
   MR0651714]},
   journal={Repr. Theory Appl. Categ.},
   number={10},
   date={2005},
   pages={vi+137},
}

\bib{koehler:uct}{article}{
   author={K\"ohler, Manuel},
   title={Universal coefficient theorems in equivariant KK-theory},
   conference={
      title={PhD thesis, Georg-August-Universit\"at G\"ottingen},
   },
   book={
      series={},
      volume={},
      publisher={},
      place={},
   },
   eprint={http://webdoc.sub.gwdg.de/diss/2011/koehler/},
   date={2010},
   pages={},
}

\bib{lewis:mimeo}{article}{
   author={Lewis, L. Gaunce, Jr.},
   title={The theory of Green functors},
   journal={unpublished notes},
   volume={},
   date={1981},
   eprint={http://people.virginia.edu/~mah7cd/Foundations/main.html},
   pages={},
  }

\bib{lewis-mandell:uct}{article}{
   author={Lewis, L. Gaunce, Jr.},
   author={Mandell, Michael A.},
   title={Equivariant universal coefficient and K\"unneth spectral
   sequences},
   journal={Proc. London Math. Soc. (3)},
   volume={92},
   date={2006},
   number={2},
   pages={505--544},
}

\bib{lewis-may-steinberger}{book}{
   author={Lewis, L. G., Jr.},
   author={May, J. P.},
   author={Steinberger, M.},
   author={McClure, J. E.},
   title={Equivariant stable homotopy theory},
   series={Lecture Notes in Mathematics},
   volume={1213},
   note={With contributions by J. E. McClure},
   publisher={Springer-Verlag},
   place={Berlin},
   date={1986},
   pages={x+538},
}

\bib{lindner:remark}{article}{
   author={Lindner, Harald},
   title={A remark on Mackey-functors},
   journal={Manuscripta Math.},
   volume={18},
   date={1976},
   number={3},
   pages={273--278},
}

\bib{meyer_homom}{article}{
   author={Meyer, Ralf},
   title={Equivariant Kasparov theory and generalized homomorphisms},
   journal={$K$-Theory},
   volume={21},
   date={2000},
   number={3},
   pages={201--228},
}

\bib{meyer_cat}{article}{
   author={Meyer, Ralf},
   title={Categorical aspects of bivariant $K$-theory},
   conference={
      title={$K$-theory and noncommutative geometry},
   },
   book={
      series={EMS Ser. Congr. Rep.},
      publisher={Eur. Math. Soc., Z\"urich},
   },
   date={2008},
   pages={1--39},
}

\bib{meyer:hom}{article}{
   author={Meyer, Ralf},
   title={Homological algebra in bivariant $K$-theory and other triangulated
   categories. II},
   journal={Tbil. Math. J.},
   volume={1},
   date={2008},
   pages={165--210},
}

\bib{meyer_nest_bc}{article}{
   author={Meyer, Ralf},
   author={Nest, Ryszard},
   title={The Baum-Connes conjecture via localisation of categories},
   journal={Topology},
   volume={45},
   date={2006},
   number={2},
   pages={209--259},
}

\bib{meyernest_hom}{article}{
   author={Meyer, Ralf},
   author={Nest, Ryszard},
   title={Homological algebra in bivariant $K$-theory and other triangulated
   categories. I},
   conference={
      title={Triangulated categories},
   },
   book={
      series={London Math. Soc. Lecture Note Ser.},
      volume={375},
      publisher={Cambridge Univ. Press},
      place={Cambridge},
   },
   date={2010},
   pages={236--289},
}

\bib{phillips:free}{book}{
   author={Phillips, N. Christopher},
   title={Equivariant $K$-theory and freeness of group actions on $C^*$-algebras},
   series={Lecture Notes in Mathematics},
   volume={1274},
   publisher={Springer-Verlag},
   place={Berlin},
   date={1987},
   pages={viii+371},
}

\bib{rs}{article}{
   author={Rosenberg, Jonathan},
   author={Schochet, Claude},
   title={The K\"unneth theorem and the universal coefficient theorem for
   Kasparov's generalized $K$-functor},
   journal={Duke Math. J.},
   volume={55},
   date={1987},
   number={2},
   pages={431--474},
}

\bib{rosenberg-schochet:equiv}{article}{
   author={Rosenberg, Jonathan},
   author={Schochet, Claude},
   title={The K\"unneth theorem and the universal coefficient theorem for
   equivariant $K$-theory and $KK$-theory},
   journal={Mem. Amer. Math. Soc.},
   volume={62},
   date={1986},
   number={348},
   pages={vi+95},
}

\bib{thevenaz-webb:structure}{article}{
   author={Th{\'e}venaz, Jacques},
   author={Webb, Peter},
   title={The structure of Mackey functors},
   journal={Trans. Amer. Math. Soc.},
   volume={347},
   date={1995},
   number={6},
   pages={1865--1961},
   issn={0002-9947},
}

\bib{ventura}{article}{
   author={Ventura, Joana},
   title={Homological algebra for the representation Green functor for
   abelian groups},
   journal={Trans. Amer. Math. Soc.},
   volume={357},
   date={2005},
   number={6},
   pages={2253--2289 (electronic)},
   issn={0002-9947},
}

\bib{webb:guide}{article}{
   author={Webb, Peter},
   title={A guide to Mackey functors},
   conference={
      title={Handbook of algebra, Vol. 2},
   },
   book={
      publisher={North-Holland},
      place={Amsterdam},
   },
   date={2000},
   pages={805--836},
}

\end{biblist}
\end{bibdiv}

\end{document}